\documentclass[12pt]{amsart}
\usepackage{}
\usepackage{amsmath}
\usepackage{amsfonts}
\usepackage{amssymb}
\usepackage[all,cmtip]{xy}           
\usepackage{xspace}

\usepackage{bbding}
\usepackage{txfonts}
\usepackage[shortlabels]{enumitem}
\usepackage{ifpdf}
\ifpdf
\usepackage[colorlinks,final,backref=page,hyperindex]{hyperref}
\else
\usepackage[colorlinks,final,backref=page,hyperindex,hypertex]{hyperref}
\fi
\usepackage{tikz}
\usepackage[active]{srcltx}

\topmargin -.8cm \textheight 22.8cm \oddsidemargin 0cm \evensidemargin -0cm \textwidth 16.3cm

\makeatletter

\newtheorem{theorem}{Theorem}[section]
\newtheorem{prop}[theorem]{Proposition}
\newtheorem{lemma}[theorem]{Lemma}
\newtheorem{coro}[theorem]{Corollary}
\newtheorem{prop-def}{Proposition-Definition}[section]
\newtheorem{conjecture}[theorem]{Conjecture}
\newtheorem{question}[theorem]{Question}
\theoremstyle{definition}
\newtheorem{defn}[theorem]{Definition}

\newtheorem{remark}[theorem]{Remark}

\newcommand{\nc}{\newcommand}


\nc{\delete}[1]{{}}
\nc{\mmargin}[1]{}

\nc{\mlabel}[1]{\label{#1}}  
\nc{\mcite}[1]{\cite{#1}}  
\nc{\mref}[1]{\ref{#1}}  
\nc{\mbibitem}[1]{\bibitem{#1}} 

\delete{
	\nc{\mlabel}[1]{\label{#1}  
		{\hfill \hspace{1cm}{\bf{{\ }\hfill(#1)}}}}
	\nc{\mcite}[1]{\cite{#1}{{\bf{{\ }(#1)}}}}  
	\nc{\mref}[1]{\ref{#1}{{\bf{{\ }(#1)}}}}  
	\nc{\mbibitem}[1]{\bibitem [\bf #1]{#1}} 
}


 \font\cyrs=wncyr7


\newcommand{\bk}{{\mathbf{k}}}


\nc{\ideal}{\mathrm{I}}

\nc{\free}[1]{\cald_\lambda\langle{#1}\rangle}
\nc{\freec}[1]{\cald_\lambda[{#1}]}
\nc{\freed}[1]{\cald_\lambda({#1})}
\nc{\freecd}[1]{\calc\cald_\lambda({#1})}
\nc{\freel}[1]{\call_\lambda({#1})}
\nc{\vep}{\varepsilon}
\nc{\bin}[2]{ (_{\stackrel{\scs{#1}}{\scs{#2}}})}  
\nc{\binc}[2]{(\!\! \begin{array}{c} \scs{#1}\\
		\scs{#2} \end{array}\!\!)}  
\nc{\bincc}[2]{  ( {\scs{#1} \atop
		\vspace{-1cm}\scs{#2}} )}  
\nc{\oline}[1]{\overline{#1}}
\nc{\mapm}[1]{\lfloor\!|{#1}|\!\rfloor}
\nc{\bs}{\bar{S}}
\nc{\la}{\longrightarrow}
\nc{\ot}{\otimes}
\nc{\rar}{\rightarrow}
\nc{\dar}{\downarrow}
\nc{\dap}[1]{\downarrow \rlap{$\scriptstyle{#1}$}}
\nc{\defeq}{\stackrel{\rm def}{=}}
\nc{\dis}[1]{\displaystyle{#1}}
\nc{\dotcup}{\ \displaystyle{\bigcup^\bullet}\ }
\nc{\hcm}{\ \hat{,}\ }
\nc{\hts}{\hat{\otimes}}
\nc{\hcirc}{\hat{\circ}}
\nc{\lleft}{[}
\nc{\lright}{]}
\nc{\curlyl}{\left \{ \begin{array}{c} {} \\ {} \end{array}
	\right .  \!\!\!\!\!\!\!}
\nc{\curlyr}{ \!\!\!\!\!\!\!
	\left . \begin{array}{c} {} \\ {} \end{array}
	\right \} }
\nc{\longmid}{\left | \begin{array}{c} {} \\ {} \end{array}
	\right . \!\!\!\!\!\!\!}
\nc{\ora}[1]{\stackrel{#1}{\rar}}
\nc{\ola}[1]{\stackrel{#1}{\la}}
\nc{\scs}[1]{\scriptstyle{#1}} \nc{\mrm}[1]{{\rm #1}}
\nc{\dirlim}{\displaystyle{\lim_{\longrightarrow}}\,}
\nc{\invlim}{\displaystyle{\lim_{\longleftarrow}}\,}
\nc{\dislim}[1]{\displaystyle{\lim_{#1}}} \nc{\colim}{\mrm{colim}}
\nc{\mvp}{\vspace{0.3cm}} \nc{\tk}{^{(k)}} \nc{\tp}{^\prime}
\nc{\ttp}{^{\prime\prime}} \nc{\svp}{\vspace{2cm}}
\nc{\vp}{\vspace{8cm}}
\nc{\modg}[1]{\!<\!\!{#1}\!\!>}
\nc{\intg}[1]{F_C(#1)}
\nc{\lmodg}{\!<\!\!}
\nc{\rmodg}{\!\!>\!}
\nc{\cpi}{\widehat{\Pi}}
\nc{\ssha}{{\mbox{\cyrs X}}} 
\nc{\tsha}{{\mbox{\cyrt X}}}
\nc{\shpr}{\diamond}    
\nc{\labs}{\mid\!}
\nc{\rabs}{\!\mid}


\nc{\on}{on \xspace}
\nc{\tforall}{\text{\ \  for all }}
\nc{\gsb}{Gr\"obner-Shirshov basis\xspace}
\nc{\gsbs}{Gr\"obner-Shirshov bases\xspace}
\nc{\ann}{\mrm{ann}}
\nc{\Aut}{\mrm{Aut}}
\nc{\br}{\mrm{bre}}
\nc{\can}{\mrm{can}}
\nc{\Cont}{\mrm{Cont}}
\nc{\rchar}{\mrm{char}}
\nc{\cok}{\mrm{coker}}
\nc{\de}{\mrm{dep}}
\nc{\dtf}{{R-{\rm tf}}}
\nc{\dtor}{{R-{\rm tor}}}

\nc{\Div}{{\mrm Div}}
\nc{\End}{\mrm{End}}
\nc{\Ext}{\mrm{Ext}}
\nc{\Fil}{\mrm{Fil}}
\nc{\Fr}{\mrm{Fr}}
\nc{\Frob}{\mrm{Frob}}
\nc{\Gal}{\mrm{Gal}}
\nc{\GL}{\mrm{GL}}
\nc{\Hom}{\mrm{Hom}}
\nc{\hsr}{\mrm{H}}
\nc{\hpol}{\mrm{HP}}
\nc{\id}{\mrm{id}}
\nc{\im}{\mrm{im}}
\nc{\Id}{\mrm{Id}}
\nc{\DI}{\mrm{DI}}
\nc{\DIrr}{\mrm{DIrr}}
\nc{\Irr}{\mrm{Irr}}
\nc{\incl}{\mrm{incl}}
\nc{\lex}{\mrm{lex}}
\nc{\NLSW}{\mrm{NLSW}}
\nc{\Lie}{\mrm{Lie}}
\nc{\mchar}{\rm char}
\nc{\mpart}{\mrm{part}}
\nc{\ql}{{\QQ_\ell}}
\nc{\qp}{{\QQ_p}}
\nc{\rank}{\mrm{rank}}
\nc{\rcot}{\mrm{cot}}
\nc{\rdef}{\mrm{def}}
\nc{\rdiv}{{\rm div}}
\nc{\rtf}{{\rm tf}}
\nc{\rtor}{{\rm tor}}
\nc{\res}{\mrm{res}}
\nc{\SL}{\mrm{SL}}
\nc{\Spec}{\mrm{Spec}}
\nc{\supp}{\mrm{supp}}
\nc{\tor}{\mrm{tor}}
\nc{\Tr}{\mrm{Tr}}
\nc{\tr}{\mrm{tr}}


\nc{\bfk}{{\bf k}}
\nc{\bfone}{{\bf 1}}
\nc{\bfzero}{{\bf 0}}
\nc{\detail}{\marginpar{\bf More detail}
	\noindent{\bf Need more detail!}
	\svp}
\nc{\Diff}{\mathbf{Diff}}
\nc{\gap}{\marginpar{\bf Incomplete}\noindent{\bf Incomplete!!}
	\svp}
\nc{\FMod}{\mathbf{FMod}}
\nc{\Int}{\mathbf{Int}}
\nc{\Mon}{\mathbf{Mon}}
\nc{\remarks}{\noindent{\bf Remarks: }}
\nc{\Rep}{\mathbf{Rep}}
\nc{\Rings}{\mathbf{Rings}}
\nc{\Sets}{\mathbf{Sets}}

\nc{\BA}{{\mathbb A}}   \nc{\CC}{{\mathbb C}}
\nc{\DD}{{\mathbb D}}   \nc{\EE}{{\mathbb E}}
\nc{\FF}{{\mathbb F}}   \nc{\GG}{{\mathbb G}}
\nc{\HH}{{\mathbb H}}   \nc{\LL}{{\mathbb L}}
\nc{\NN}{{\mathbb N}}   \nc{\PP}{{\mathbb P}}
\nc{\QQ}{{\mathbb Q}}   \nc{\RR}{{\mathbb R}}
\nc{\TT}{{\mathbb T}}   \nc{\VV}{{\mathbb V}}
\nc{\ZZ}{{\mathbb Z}}   \nc{\TP}{\widetilde{P}}


\nc{\cala}{{\mathcal A}}    \nc{\calc}{{\mathcal C}}
\nc{\cald}{\mathcal{D}}     \nc{\cale}{{\mathcal E}}
\nc{\calf}{{\mathcal F}}    \nc{\calg}{{\mathcal G}}
\nc{\calh}{{\mathcal H}}    \nc{\cali}{{\mathcal I}}
\nc{\call}{{\mathcal L}}    \nc{\calm}{{\mathcal M}}
\nc{\caln}{{\mathcal N}}    \nc{\calo}{{\mathcal O}}
\nc{\calp}{{\mathcal P}}    \nc{\calr}{{\mathcal R}}
\nc{\cals}{{\mathcal S}}    \nc{\calt}{{\Omega}}
\nc{\calv}{{\mathcal V}}    \nc{\calw}{{\mathcal W}}
\nc{\calx}{{\mathcal X}}

\nc{\fraka}{{\mathfrak a}}
\nc{\frakb}{{\mathfrak b}}
\nc{\frakc}{{\mathfrak c}}
\nc{\frakg}{{\frak g}}
\nc{\frakB}{{\frak B}}
\nc{\frakm}{{\frak m}}
\nc{\frakM}{{\frak M}}
\nc{\frakp}{{\frak p}}
\nc{\frakW}{{\frak W}}
\nc{\frakX}{{\frak X}}
\nc{\frakS}{{\frak S}}
\nc{\frakA}{{\frak A}}
\nc{\fraks}{{\frak s}}
\nc{\fraku}{{\frak u}}
\nc{\frakv}{{\frak v}}
\nc{\frakx}{{\frak x}}
\nc{\frakI}{{\frak I}}

\nc{\lir}[1]{\textcolor{red}{\underline{Li:}#1 }}

\begin{document}

\title[Free differential algebras and Gr\"obner-Shirshov bases]{Construction of free differential algebras by extending Gr\"{o}bner-Shirshov bases}

\author[Li Guo]{Li Guo}
\address{Department of Mathematics and Computer Science, Rutgers University, Newark, NJ 07102, USA}
\email{liguo@rutgers.edu}

\author[Yunnan Li]{Yunnan Li}
\address{School of Mathematics and Information Science, Guangzhou University, Waihuan Road West 230, Guangzhou 510006, China}
\email{ynli@gzhu.edu.cn}

\date{\today}

\begin{abstract}
As a fundamental notion, the free differential algebra on a set is concretely constructed as the polynomial algebra on the differential variables. Such a construction is not known for the more general notion of the free differential algebra on an algebra, from the left adjoint functor of the forgetful functor from differential algebras to algebras, instead of sets. In this paper we show that generator-relation properties of a base algebra can be extended to the free differential algebra on this base algebra. More precisely, a Gr\"obner-Shirshov basis property of the base algebra can be extended to the free differential algebra on this base algebra, allowing a Poincar\'e-Birkhoff-Witt type basis for these more general free differential algebras. Examples are given as illustrations.
\end{abstract}

\subjclass[2010]{13Nxx, 12H05, 16S15, 16S10, 16W99}

\keywords{differential algebra, free differential algebra, Gr\"obner basis, Gr\"{o}bner-Shirshov basis}

\maketitle

\tableofcontents

\allowdisplaybreaks

\section{Introduction}

A differential algebra is an associative algebra equipped with a linear operator satisfying the Leibniz rule modeled after the differential operator in analysis. Indeed, the study of differential algebra originated from the algebraic study of differential equations pioneered by Ritt in the 1930s~\cite{Rit,Rit1}. In the following decades, the theory has been fully developed to comprise differential Galois theory, differential algebraic geometry and differential algebraic groups~\cite{Kol,Mag,PS}, with broad connections to areas in mathematics and mathematical physics~\cite{FLS,MS}.

Traditionally, differential algebra works in the context of commutative algebras and fields. There the free objects are generated by sets, realized as differential polynomial algebras, which are simply polynomials generated by differential variables from the generating sets. In recent years, this traditional framework of differential algebra has been extended in several directions which motivated the present study.

On the one hand, the commutativity condition is removed to include naturally arisen algebras such as path algebras and to have a more meaningful differential Lie algebra theory generalizing the classical relationship between associative algebra and Lie algebra~\cite{GL,Poi,Poi1}.
On the other hand, the Leibniz rule is generalized to include the differential quotient $\frac{f(x+\lambda)-f(x)}{\lambda}$ before taking the limit, leading to the notion of a differential algebra of weight $\lambda$~\cite{GK}, closely related to the extensively studied difference algebra~\cite{Coh,Lev,PS1}.

Furthermore, as an integral analog of differential algebras, Rota-Baxter algebras are algebraic abstraction of the integral and summation operators, originated from the probability study of G. Baxter~\cite{Ba} in 1960 and experienced rapid expansion with applications in combinatorics, number theory and mathematical physics. See~\cite{Guo,Guo1,GK1} and references therein. Free commutative Rota-Baxter algebras on sets were constructed by Rota and Cartier many years ago. A more general construction of free Rota-Baxter algebras was obtained as the left adjoint functor of the forgetful functor from the category of commutative Rota-Baxter algebras to that of commutative algebras~\cite{GK1} instead of sets. This more general construction was obtained from a generalization of the shuffle product, called the mixable shuffle product, closely related to the quasi-shuffle product which plays a pivot role in the study of multiple zeta values. In the noncommutative context, the (noncommutative) Rota-Baxter algebras on an algebra were constructed by bracketed words and planar rooted trees~\cite{EG,Guo}.

Thus it is natural to study the free differential algebras generated by an algebra, in both the commutative and noncommutative contexts and for an arbitrary weight, as the left adjoint functor of the forgetful functor from the category of commutative (resp. noncommutative) differential algebras to that of commutative (resp. noncommutative) algebras. Surprisingly, in contrast to the extensive study of free Rota-Baxter algebras generated by algebras as noted above, free differential algebras generated by algebras were not studied until recently by~\cite{Poi,Poi1} where such free differential algebras were expressed as quotients.

The goal of this paper is to gain a better understanding of the free differential algebra on an algebra that is comparable to the free differential algebra on a set, by constructing a canonical linear basis of the free differential algebra, rather than just to express the free differential algebra as a quotient. Naturally in order for this to work, conditions on the generating algebra should be expected. More precisely, starting with an algebra with a canonical basis, we would like to obtain a basis for the free differential algebra on this algebra. So this is an analogy of the Poincar\'e-Birkhoff-Witt Theorem that gives a basis of the universal enveloping algebra of a Lie algebra with a given basis. Our approach is to apply the method of Gr\"obner-Shirshov bases.
Beginning with an algebra with a presentation by generator and relations for which the relation ideal is equipped by a Gr\"obner basis or noncommutative Gr\"obner basis, we derive a similar presentation of the free differential algebra that this algebra generates. The latter will give rise of a desired Poincar\'e-Birkhoff-Witt type bases. Since we will consider algebras with differential operators in both the commutative and noncommutative contexts, we will work with the more general notion of Gr\"obner-Shirshov bases.
For earlier approaches on differential Gr\"obner bases and characteristic sets for differential commutative algebras of weight zero, see~\mcite{Ai,CF,Ma,Ol} and references therein.

The method of Gr\"{o}bner-Shirshov bases provides a powerful tool to deal with ideals of free objects in various algebraic structures.
It provides an algorithmic way to derive a linear basis for an algebra presented by generators and relations.
The readers are referred to the survey \cite{BC} for a general review of the method of Gr\"{o}bner-Shirshov bases and their relationship with Gr\"obner bases.
So far there have been several studies on differential algebras and their related generalizations via the theory of Gr\"{o}bner-Shirshov bases \cite{BCQ,CCL,GGR,GSZ,QC}. In particular, free differential algebras and free differential Rota-Baxter algebras over sets~\cite{GK} can also be obtained via Gr\"{o}bner-Shirshov bases~\cite{BCQ}. In a closely related structure, free integro-differential algebras on sets in both the commutative and noncommutative contexts are also obtained by Gr\"obner-Shirshov bases~\mcite{GG,GRR,GGR}.

The outline of this paper is as follows. In Section~\mref{sec:cd} we first recall the notion of a differential algebra (with weight) and give our preliminary construction of the free differential algebra on an algebra. We then present the composition-diamond lemmas on Gr\"{o}bner-Shirshov bases for associative algebras and differential algebras, including their commutative counterparts. In Section~\mref{sec:bases}, we apply the composition-diamond lemma for differential algebras to obtain Gr\"obner-Shirshov bases for free differential algebras on algebras in both the noncommutative case (Theorem~\mref{mtgs}) and the commutative case (Theorem~\ref{mtgsc}). These results on \gsbs allow us to obtain canonical bases for these free objects (Propositions~\mref{prop:fdab} and \mref{prop:fcdab}). In a nutshell, we show that a \gsb of an algebra can be ``differentially" extended to a differential \gsb of the free differential algebra on this algebra, in all the cases except for the ``classical" one, namely for differential commutative algebras with weight zero, when there are obstructions of such extension.
As demonstrations, examples are given in Section~\mref{ss:exam} for free differential algebras on special algebras, including commutative algebras with one generator and some finite group algebras.

\smallskip

\noindent
{\bf Notation.} Throughout the paper, we fix a base field $\bk$ of characteristic 0. All modules, algebras, homomorphisms and tensor products are taken over $\bfk$ unless otherwise specified.

\section{Differential algebras and composition-diamond lemmas}
\mlabel{sec:cd}

In this section, we provide preliminary results, motivation and the composition-diamond lemmas that will be used in our study of \gsbs construction of free differential algebras on algebras later in this paper.

\subsection{Differential algebras and free differential algebras}
\mlabel{ss:free}
We first recall the notion of a differential algebra with weight~\mcite{GK}.
\begin{defn}
Let $\lambda\in \bk$ be fixed. A {\bf differential $\bk$-algebra of weight $\lambda$} (also called a {\bf $\lambda$-differential $\bk$-algebra}) is an associative $\bk$-algebra $R$ together with a linear operator $d:R\rar R$ such that
\begin{equation}
d(xy)=d(x)y+x d(y)+\lambda d(x)d(y) \tforall\, x,y\in R.
\mlabel{diff}
\end{equation}
If $R$ is unital, it further requires that
\begin{equation}\label{difu}
d(1_R)=0.
\end{equation}
Such an operator is called a {\bf differential operator of weight $\lambda$} or a {\bf derivation of weight $\lambda$}. It is also called a {\bf $\lambda$-differential operator} or a {\bf $\lambda$-derivation} in short.
A {\bf homomorphism of differential algebras} is an algebra homomorphism commutating with the derivations. The category of differential algebras (resp. differential commutative algebras) of weight $\lambda$ is denoted by DA$_\lambda$ (resp. CDA$_\lambda$).
\end{defn}

Let $(R,d)$ be a differential $\bfk$-algebra of weight $\lambda$. The higher order Leibniz rule of derivatives generalizes to~\cite[Proposition 2.1.(b)]{GK},
\begin{equation}
d^{n}(xy) =
\sum_{j=0}^{n}\sum_{k=0}^{n-j}{n\choose j}{n-j \choose k}
\lambda^{j}d^{n-k}(x)d^{j+k}(y)
=\hspace{-.7cm} \sum_{n=j+k+\ell, j, k, \ell\geq 0} {n\choose j, k, \ell} \lambda^j d^{n-k}(x)d^{j+k}(y)\,
\mlabel{eq:der2}
\end{equation}
for all $x,y \in R, n \in \NN$. Applying this equation repeatedly, we obtain its multi-factor generalization which we record for later use.
\begin{lemma}
Let $R$ be a $\lambda$-differential $\bk$-algebra. Let $x_1,\dots x_{r+1}\in R,\,r\geq1$ and $n\geq0$. Then
\begin{equation}
\begin{split}
d^n(x_1\cdots x_{r+1})
&= \sum_{\substack{n=j_1+k_1+\ell_1\\
j_1+k_1=j_2+k_2+\ell_2\\
 \cdots \\
 j_{r-1}+k_{r-1}=j_r+j_r+\ell_r}} {n\choose j_1,k_1,\ell_1} {j_1+k_1\choose j_2,k_2,\ell_2} \cdots {j_{r-1}+k_{r-1}\choose j_r,k_r,\ell_r} \\
& \qquad \times \lambda^{j_1+\cdots +j_r} d^{n-k_1}(x_1)d^{j_1+k_1-k_2}(x_2)\cdots d^{j_{r-1}+k_{r-1}-k_r}(x_r)d^{j_r+k_r}(x_{r+1}).
\end{split}
\label{dm}
\end{equation}
\end{lemma}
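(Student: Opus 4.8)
The plan is to prove \eqref{dm} by induction on $r$, the base case $r=1$ being exactly the two-factor rule \eqref{eq:der2}. For the inductive step I would split off the first factor, writing $x_1\cdots x_{r+1}=x_1\,(x_2\cdots x_{r+1})$ and applying \eqref{eq:der2} with $x=x_1$ and $y=x_2\cdots x_{r+1}$, to get
\[
d^n(x_1\cdots x_{r+1})=\sum_{n=j_1+k_1+\ell_1}{n\choose j_1,k_1,\ell_1}\lambda^{j_1}\,d^{n-k_1}(x_1)\,d^{j_1+k_1}(x_2\cdots x_{r+1}).
\]
This already produces the outermost index $(j_1,k_1,\ell_1)$, the factor $d^{n-k_1}(x_1)$, the multinomial coefficient ${n\choose j_1,k_1,\ell_1}$, and the first power $\lambda^{j_1}$ exactly as they occur in \eqref{dm}.

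Next I would apply the induction hypothesis to the product $x_2\cdots x_{r+1}$ of $r$ factors, with $m:=j_1+k_1$ in place of $n$. This expands each $d^{m}(x_2\cdots x_{r+1})$ into a sum over $(j_2,k_2,\ell_2),\dots,(j_r,k_r,\ell_r)$ subject to $m=j_2+k_2+\ell_2$ and the remaining chained constraints, with product $d^{m-k_2}(x_2)\cdots d^{j_r+k_r}(x_{r+1})$, coefficient ${m\choose j_2,k_2,\ell_2}\cdots{j_{r-1}+k_{r-1}\choose j_r,k_r,\ell_r}$, and power $\lambda^{j_2+\cdots+j_r}$. Substituting $m=j_1+k_1$ turns the leading induction constraint into $j_1+k_1=j_2+k_2+\ell_2$ and the factor $d^{m-k_2}(x_2)$ into $d^{j_1+k_1-k_2}(x_2)$, matching the second constraint and second factor of \eqref{dm}.

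The only remaining work, and the sole real obstacle, is bookkeeping: one must check that the constraint $n=j_1+k_1+\ell_1$ together with the chain inherited from the induction hypothesis assembles into the full simultaneous constraint of \eqref{dm}; that the outer coefficient ${n\choose j_1,k_1,\ell_1}$ multiplied by the inherited coefficients yields the stated product of multinomial coefficients; and that $\lambda^{j_1}$ combines with $\lambda^{j_2+\cdots+j_r}$ to give $\lambda^{j_1+\cdots+j_r}$. Since all summations are finite, interchanging their order and collecting terms then yields \eqref{dm}, completing the induction.
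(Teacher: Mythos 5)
Your proof is correct and follows essentially the same route as the paper, which simply notes that Eq.~\eqref{dm} is obtained by ``applying Eq.~\eqref{eq:der2} repeatedly''---that is, precisely your induction on $r$, splitting off the leftmost factor and substituting $j_1+k_1$ for $n$ in the inner expansion. The bookkeeping you defer (chaining of constraints, multiplication of multinomial coefficients, and addition of exponents of $\lambda$) works out exactly as you describe, and in fact your write-up makes explicit the iteration the paper leaves to the reader.
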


A subset $I$ of a differential algebra $(R,d)$ is called a {\bf differential ideal} of $R$ if $I$ is an ideal of $R$ (as an algebra) such that $d(I)\subseteq I$.
In this case, the quotient algebra $R/I$ is a differential algebra with the induced derivation $\bar{d}$ of $d$ modulo $I$.
For any subset $U$ of $R$, there exists the smallest differential ideal of $R$ containing $U$, called the {\bf differential ideal of $R$ generated by $U$} and denoted by $\DI(U)$. In fact,
by Eq.~(\ref{dm}),
\begin{equation}\label{dix}
\DI(U)=\ideal\left(d^n(u)\,\big|\,u\in U,n\geq0\right).
\end{equation}
Here $\ideal(S)$ or simply $(S)$ denotes the ideal generated by $S$.

\begin{defn}
Let $X$ be a set. The {\bf free differential algebra \on $X$}, denoted by $(\free{X},d_X)$, is a differential algebra $(\free{X},d_X)$ together with a map $i_X:X\rar\free{X}$ satisfying the following universal property. For any differential algebra  $(R,d_R)$ and map $f:X\rar R$, there exists a unique differential algebra homomorphism $\bar{f}:\free{X}\rar R$ such that $f=\bar{f}\circ i_X$.

Also, the {\bf free commutative differential algebra \on $X$} is a differential algebra $(\freec{X},d_X)$ together with a map $i_X:X\rar\freec{X}$ satisfying the same universal property, except that the differential algebra $(R,d_R)$ is commutative.
\end{defn}

When $\lambda=0$, the free commutative differential algebra $\freec{X}$ is traditionally denoted by $\bfk\{X\}$. We use the new notation for easy comparison with the related structures, such as free (noncommutative) differential algebras.

The following fundamental construction of free (commutative) differential algebra \on a set is well-known~\cite{GK,Kol}.
\begin{theorem}
Let $X$ be a set and denote
$$\Delta(X):=X\times \NN= \{ x^{(n)}\, \big|\, x\in X, n\geq 0\}.$$
\begin{enumerate}
\item
Let $\bk\langle\Delta(X)\rangle$ be the noncommutative polynomial algebra on the set $\Delta(X)$.
Define $d_X: \bk\langle\Delta(X)\rangle\to\bk\langle\Delta(X)\rangle$ as follows. Let $w=u_1\cdots u_k$ with $u_i\in\Delta(X)$, $1\leq i\leq k$, be a noncommutative word from the alphabet set $\Delta(X)$.  If $k=1$, so that $w=x^{(n)}\in \Delta(X)$, define $d_X(w):=x^{(n+1)}$. If $k>1$, recursively define
\begin{equation}\label{dprod}
d_X(w):=d_X(u_1)u_2\cdots u_k+u_1d_X(u_2\cdots u_k)+\lambda d_X(u_1)d_X(u_2\cdots u_k).
\end{equation}
Further define $d_X(1):=0$ and extend $d_X$ to $\bk\langle\Delta(X)\rangle$ by linearity.
Then $(\bk\langle\Delta(X)\rangle,d_X)$ is the free (noncommutative) differential algebra $(\free{X},d_X)$ of weight $\lambda$ \on $X$.
\mlabel{it:fda1}
\medskip
\item
Let $\bk[\Delta(X)]$ be the polynomial algebra on the set $\Delta(X)$.
Define $d_X:\bk[\Delta(X)] \to \bk[\Delta(X)]$ on the commutative words from the alphabet set $\Delta(X)$ in the same way as in Item~(\mref{it:fda1}).
Then $(\bk[\Delta(X)],d_X)$ is the free commutative differential algebra $(\freec{X},d_X)$ of weight $\lambda$ \on $X$.
\end{enumerate}
\mlabel{fda}
\end{theorem}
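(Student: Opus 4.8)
The plan is to verify directly that $(\bk\langle\Delta(X)\rangle,d_X)$ satisfies the required universal property, with the canonical map $i_X\colon X\to\bk\langle\Delta(X)\rangle$ given by $i_X(x)=x^{(0)}$. The first observation is that, by construction, $d_X^n(x^{(0)})=x^{(n)}$, so the letters in $\Delta(X)$ are precisely the iterated derivatives of the images of $X$. In particular $\bk\langle\Delta(X)\rangle$ is generated as an algebra by the elements $d_X^n(i_X(x))$, a fact that will force uniqueness below.

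The main technical step is to show that $d_X$ is genuinely a differential operator of weight $\lambda$, that is, that the Leibniz rule \eqref{diff} holds for \emph{all} products $vw$ and not merely for the privileged factorization $w=u_1\cdot(u_2\cdots u_k)$ used in the recursive definition \eqref{dprod}. I would prove the identity $d_X(vw)=d_X(v)\,w+v\,d_X(w)+\lambda\,d_X(v)\,d_X(w)$ for arbitrary words $v,w$ by induction on the length of $v$. The base case $|v|=1$ is exactly \eqref{dprod}. For $|v|>1$ write $v=u_1v'$, apply \eqref{dprod} to the factorization $vw=u_1(v'w)$, invoke the induction hypothesis on $d_X(v'w)$, and regroup, using \eqref{dprod} once more to expand $d_X(v)=d_X(u_1v')$; the seven resulting monomials (including the weight-$\lambda$ and weight-$\lambda^2$ cross terms) then match term by term. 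This is the step I expect to be the main bookkeeping obstacle, since the $\lambda$-weighted cross terms must be tracked carefully, though it is a finite computation with no conceptual difficulty. Together with $d_X(1)=0$ this establishes $(\bk\langle\Delta(X)\rangle,d_X)\in\mathrm{DA}_\lambda$.

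For the universal property, let $(R,d_R)$ be any $\lambda$-differential algebra and $f\colon X\to R$. Uniqueness is immediate: any differential homomorphism $\bar f$ with $\bar f\circ i_X=f$ must satisfy $\bar f(x^{(n)})=\bar f(d_X^n(x^{(0)}))=d_R^n(\bar f(x^{(0)}))=d_R^n(f(x))$, and these values determine $\bar f$ on all of $\bk\langle\Delta(X)\rangle$ by multiplicativity. For existence, I would use that $\bk\langle\Delta(X)\rangle$ is the \emph{free associative algebra} on the set $\Delta(X)$ to define an algebra homomorphism $\bar f$ by setting $\bar f(x^{(n)}):=d_R^n(f(x))$ on generators. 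It then remains to check that $\bar f$ commutes with the derivations, i.e.\ $\bar f(d_X(w))=d_R(\bar f(w))$; this I would again prove by induction on the length of $w$, the base case holding by definition of $\bar f$ and the inductive step following from \eqref{dprod}, the fact that both $d_X$ and $d_R$ obey the weight-$\lambda$ Leibniz rule, and the induction hypothesis. This proves part (i).

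Part (ii) follows the same template, with one extra point to settle at the outset: because commutative monomials admit many factorizations, I must first confirm that $d_X$ is well defined on $\bk[\Delta(X)]$. The cleanest route is to realize $\bk[\Delta(X)]$ as the quotient of $\bk\langle\Delta(X)\rangle$ by the commutator ideal $\frakc$ and to check that $\frakc$ is a differential ideal: the Leibniz rule established above gives $d_X(ab-ba)=[d_X(a),b]+[a,d_X(b)]+\lambda[d_X(a),d_X(b)]\in\frakc$, so $d_X$ descends to $\bk[\Delta(X)]$. The derivation property and the universal property, now taken with respect to commutative $R$, then transfer verbatim from the noncommutative case.
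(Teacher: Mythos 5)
Your proposal is correct, but there is nothing in the paper to compare it against: the paper states this theorem without proof, citing it as well known from \cite{GK,Kol}. Your direct verification is precisely the standard argument from that literature. Concretely: (a) proving the weight-$\lambda$ Leibniz rule \eqref{diff} for an arbitrary splitting $vw$ from the one-letter-peeling recursion \eqref{dprod}, by induction on the length of $v$, is exactly the needed bookkeeping, and your seven-term matching (including the $\lambda$ and $\lambda^2$ cross terms) does close; (b) using the freeness of $\bk\langle\Delta(X)\rangle$ as an associative algebra on $\Delta(X)$ to build $\bar f$ on generators by $x^{(n)}\mapsto d_R^n(f(x))$, then checking $\bar f\circ d_X=d_R\circ\bar f$ by induction on word length, is the standard existence/uniqueness argument. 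The one place you should be more explicit is part (ii): the identity $d_X(ab-ba)=[d_X(a),b]+[a,d_X(b)]+\lambda[d_X(a),d_X(b)]\in\frakc$ only shows that $d_X$ maps the \emph{generators} of the commutator ideal into $\frakc$; to conclude $d_X(\frakc)\subseteq\frakc$ you must also propagate this through products, i.e.\ verify $d_X(p\,c\,q)\in\frakc$ for $c$ a commutator and $p,q$ arbitrary, which follows by applying the Leibniz rule twice (this is the same propagation that underlies the paper's Eq.~\eqref{dix}, namely that a derivation mapping a generating set of an ideal into that ideal maps the whole ideal into itself). With that routine addition, your argument is complete, and it also confirms that the recursive formula in part (ii) is well defined independently of the chosen ordering of the letters in a commutative word, since it is the image of the noncommutative computation under the quotient map.
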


Instead of taking the generating object to be a set, one can also consider free differential algebras \on algebras, as the left adjoint functor of the forgetful functor from the category of differential algebras to that of algebras. This leads to the following notion~\cite{Poi,Poi1}.

\begin{defn} 	
The {\bf free differential algebra of weight $\lambda$ on an algebra $A$}, denoted by $(\freed{A},d_A)$, is a differential algebra $\freed{A}$ with a $\lambda$-derivation $d_A$ and an algebra homomorphism $i_A:A\rar\freed{A}$ satisfying the following universal property. For any differential algebra $(R,d)$ of weight $\lambda$ and an algebra homomorphism $\varphi:A\rar R$, there exists a unique differential algebra homomorphism $\bar{\varphi}:\freed{A}\rar R$ such that $\varphi=\bar{\varphi}\circ i_A$.

When $A$ is commutative, the {\bf free commutative differential algebra of weight $\lambda$} on $A$, denoted by $(\freecd{A},d_A)$, is a commutative differential algebra of weight $\lambda$ with the same universal property, but in the category of commutative differential algebras of weight $\lambda$.
\end{defn}

We note that even if $A$ is a commutative algebra,
$\freed{A}$ may not be commutative and thus is different from $\freecd{A}$. A simple example is when $A$ is the one-variable polynomial algebra $\bk[x]$, for which $\freed{A}=\bk\langle x^{(n)}\,|\,n\geq0\rangle$ and
$\freecd{A}=\bk[x^{(n)}\,|\,n\geq0]$.

\begin{remark}
In the special case when the algebra $A$ is the free $\bk$-algebra $\bk\langle X\rangle$ \on a set $X$, the free differential algebra $\freed{\bk\langle X\rangle}$ on this algebra is precisely the free differential algebra $\free{X}\in \text{DA}_\lambda$ on the set $X$ constructed in Theorem~\ref{fda}, and $\bk\langle X\rangle $ can be embedded into $\free{X}$ as its subalgebra by mapping $x$ to $x^{(0)}$ for any $x\in X$.
\mlabel{rk:algset}
\end{remark}

In general there is no construction of free differential algebras on an algebra $A$ that is as explicit. Instead, as presented in~\cite{Poi1,Poi2}, starting from an algebra $A$, one first takes the free differential algebra $\free{A}$ on the underlying set of $A$. By the universal property of $\free{A}$, the set map $A\to \freed{A}$ induces a differential algebra homomorphism
$$\free{A} \longrightarrow \freed{A}.$$
The homomorphism is surjective and hence gives a presentation of $\freed{A}$ as a quotient $\free{A}/I$ where $I$ is the kernel of the above differential algebra homomorphism. Since the algebra structure on $A$ is ignored in $\free{A}$, both this algebra and the differential ideal $I$ are quite large. Thus the presentation $\freed{A} \cong \free{A}/I$ provides limited information for the free differential algebra $\freed{A}$.

In order to study $\freed{A}$ more effectively, it is desirable to give a presentation of it by smaller quotients, that is, by quotients with smaller numerators and denominators. Furthermore the denominator (the relation ideal) should have good properties, yielding effective computations of the quotient, for example in displaying an explicit basis of $\freed{A}$.

Giving such a presentation is the main purpose of this paper, which we achieve in two steps. First, if $A$ already has a presentation by generators and relations, we use the relation idea of $A$ to obtain a relation ideal of $\freed{A}$. This is the next result. Then after preparational results on \gsbs later in this section, we provide in Section~\mref{sec:bases} more precise information when $A$ is defined by a relation ideal with a Gr\"obner basis (or Gr\"obner-Shirshov basis). See further discussions after the next result.

\begin{prop}
Let $A$ be a $\bk$-algebra and let $A=\bk\langle X\rangle/I_A$ be a presentation of $A$ with generating set $X$ and ideal $I_A$. Let $\frakI_A=\DI(I_A)$ be the differential ideal of $\free{X}$ generated by $I_A$. Then $\freed{A}=\free{X}/\frakI_A$ is a presentation of the free differential algebra \on $A$.

Similarly, let $A=\bk[X]/I_A$ be a commutative algebra and let $\frakI_A=\DI(I_A)$ be the differential ideal of $\freec{X}$ generated by $I_A$. Then the free differential algebra $\freecd{A}$ on $A$ has a presentation $\freecd{A}=\freec{X}/\frakI_A$.
\mlabel{pp:atda}
\end{prop}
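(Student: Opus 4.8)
The plan is to verify directly that $B:=\free{X}/\frakI_A$, equipped with the derivation $\bar{d}_X$ induced by $d_X$ (well defined precisely because $\frakI_A=\DI(I_A)$ is by construction a differential ideal), together with a suitable structure map $i_A\colon A\to B$, satisfies the universal property that defines $\freed{A}$. This identifies $B$ with $\freed{A}$ as differential algebras compatibly with the maps out of $A$.

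First I would construct $i_A$. By Remark~\ref{rk:algset} the free algebra $\bk\langle X\rangle$ embeds into $\free{X}$ via $x\mapsto x^{(0)}$; composing with the quotient map $\pi\colon\free{X}\to B$ gives an algebra homomorphism $\bk\langle X\rangle\to B$. Since $I_A\subseteq\frakI_A$, this homomorphism annihilates $I_A$ and therefore factors through $A=\bk\langle X\rangle/I_A$, producing the desired $i_A$.

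Next, given any differential algebra $(R,d)$ of weight $\lambda$ and algebra homomorphism $\varphi\colon A\to R$, I would produce the extension $\bar\varphi$. Restricting $\varphi$ along $X\to A$ yields a set map $f\colon X\to R$, to which the universal property of the free differential algebra $\free{X}$ on the set $X$ (Theorem~\ref{fda}) associates a unique differential algebra homomorphism $\psi\colon\free{X}\to R$ with $\psi\circ i_X=f$. The crux is to show that $\psi$ descends to $B$, i.e. that $\psi(\frakI_A)=0$. The point is that $\ker\psi$ is automatically a \emph{differential} ideal of $\free{X}$, since $\psi$ commutes with the derivations; hence it suffices to check the weaker containment $I_A\subseteq\ker\psi$, and then minimality of $\DI(I_A)$ forces $\frakI_A=\DI(I_A)\subseteq\ker\psi$. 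To get $I_A\subseteq\ker\psi$, observe that the restriction of $\psi$ to $\bk\langle X\rangle$ and the composite $\bk\langle X\rangle\to A\ora{\varphi}R$ are two algebra homomorphisms agreeing on the generators $x=x^{(0)}$, hence equal; the latter kills $I_A$ by the definition of $A$. Thus $\psi$ factors as $\bar\varphi\circ\pi$ for a differential algebra homomorphism $\bar\varphi\colon B\to R$, and evaluating on the algebra generators $\bar x$ of $A$ gives $\bar\varphi\circ i_A=\varphi$.

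Uniqueness follows formally: if $\bar\varphi'$ is another differential algebra homomorphism with $\bar\varphi'\circ i_A=\varphi$, then $\bar\varphi'\circ\pi\colon\free{X}\to R$ is a differential algebra homomorphism agreeing with $f$ on $X$, so the uniqueness clause of Theorem~\ref{fda} gives $\bar\varphi'\circ\pi=\psi=\bar\varphi\circ\pi$; surjectivity of $\pi$ then yields $\bar\varphi'=\bar\varphi$. The commutative statement is proved verbatim after replacing $\bk\langle X\rangle,\free{X}$ by $\bk[X],\freec{X}$ and working throughout in the category of commutative differential algebras of weight $\lambda$, using the commutative half of Theorem~\ref{fda}. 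I expect the only genuinely load-bearing step to be the passage $I_A\subseteq\ker\psi\Rightarrow\frakI_A\subseteq\ker\psi$: everything hinges on the kernel of a differential algebra homomorphism being a differential ideal, which upgrades an easy statement about ordinary algebra maps to the statement needed about the differential ideal $\DI(I_A)$. The remaining verifications are routine diagram chases.
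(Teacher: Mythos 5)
Your proposal is correct and follows essentially the same route as the paper's proof: construct $i_A$ by factoring the projection through $A$, lift $\varphi$ to a differential algebra homomorphism out of $\free{X}$, observe that its kernel is a differential ideal containing (the image of) $I_A$ and hence contains $\frakI_A=\DI(I_A)$, descend to the quotient, and get uniqueness from surjectivity of $\pi_{\frakI_A}$. The only cosmetic difference is that you invoke the universal property of $\free{X}$ over the \emph{set} $X$ (Theorem~\ref{fda}) together with an agreement-on-generators argument, whereas the paper invokes its universal property over the \emph{algebra} $\bk\langle X\rangle$ (Remark~\ref{rk:algset}); these are interchangeable here.
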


\begin{proof}
We only prove the case when $A=\bk\langle X\rangle/I_A$, since the commutative case for $A=\bk[X]/I_A$ is similar to check.

First we fix some notations. Let $j_X:X\rar\bk\langle X\rangle$, $i_X:X\rar\free{X}$ and $\hat{i}_X:\bk\langle X\rangle\rar\free{X}$ be the natural embeddings such that $i_X=\hat{i}_X\circ j_X$. For $A=\bk\langle X\rangle/I_A$, let $\pi_{I_A}:\bk\langle X\rangle\rar A$ and $\pi_{\frakI_A}:\free{X}\rar\free{X}/\frakI_A$ be the
canonical projections.

We organize these maps and the ones that will be introduced later in the proof in the following diagram.
\[\xymatrix@=2em{X\ar@{->}[r]^-{j_X}\ar@{->}[rdd]^-{i_X}&\bk\langle X\rangle\ar@{->}[r]^-{\pi_{I_A}}\ar@{->}[dd]^-{\hat{i}_X}& A\ar@{->}[r]^-{\varphi}\ar@{.>}[rdd]^>>>>>>>{i_A}&(R,d_R)\\
&&&\\&(\free{X},d_X)\ar@{->}[rr]^-{\pi_{\frakI_A}}\ar@{.>}[uurr]_-{\tilde{\varphi}}	 &&(\free{X}/\frakI_A,\bar{d}_X)\ar@{.>}[uu]_-{\bar{\varphi}}}\]

By Eq.~\eqref{dix}, as a differential ideal, \[\frakI_A=\DI\left(\hat{i}_X(I_A)\right)=\left((d_X^n\circ\hat{i}_X)(a)\,\bigg|\,a\in I_A,n\geq0\right).\]
Thus we have $(\pi_{\frakI_A}\circ\hat{i}_X)(I_A)=\{0\}$. Hence, there exists a unique algebra homomorphism $i_A:A\rar\free{X}/\frakI_A$ such that
$i_A\circ\pi_{I_A}=\pi_{\frakI_A}\circ\hat{i}_X$.

Now let $(R,d_R)$ be a differential algebra and $\varphi:A\rar R$ an algebra homomorphism. By the universal property of
$\free{X}$ as the free differential algebra on the algebra $\bk\langle X\rangle$ (Remark~\mref{rk:algset}), there exists a unique homomorphism $\tilde{\varphi}:\free{X}\rar R$ of differential algebras such that $\varphi\circ\pi_{I_A}=\tilde{\varphi}\circ\hat{i}_X$.
Then $\tilde{\varphi}(\hat{i}_X(I_A))=\varphi(\pi_{I_A}(I_A))=\{0\}$. Thus $\tilde{\varphi}(\frakI_A)=\{0\}$. This implies the existence of a unique differential algebra homomorphism $\bar{\varphi}:\free{X}/\frakI_A\rar R$ such that $\tilde{\varphi}=\bar{\varphi}\circ\pi_{\frakI_A}$. Then we have
\[\bar{\varphi}\circ i_A\circ\pi_{I_A}=\bar{\varphi}\circ\pi_{\frakI_A}\circ\hat{i}_X=\tilde{\varphi}\circ\hat{i}_X=\varphi\circ\pi_{I_A}.\]
Since $\pi_{I_A}$ is surjective, it follows that  $\varphi=\bar{\varphi}\circ i_A$. Next suppose that there exists another differential algebra homomorphism $\bar{\varphi}'$ such that $\varphi=\bar{\varphi}'\circ i_A$, then
\[\bar{\varphi}'\circ\pi_{\frakI_A}\circ\hat{i}_X=\bar{\varphi}'\circ i_A\circ\pi_{I_A}=\varphi\circ\pi_{I_A}=\bar{\varphi}\circ i_A\circ\pi_{I_A}=\bar{\varphi}\circ\pi_{\frakI_A}\circ\hat{i}_X.\]
Again by the universal property of $\free{X}$ on $\bk\langle X\rangle$, we have
$\bar{\varphi}'\circ\pi_{\frakI_A}=\bar{\varphi}\circ\pi_{\frakI_A}$. Thus $\bar{\varphi}'=\bar{\varphi}$ since $\pi_{\frakI_A}$ is surjective.

Hence, the free differential algebra $\freed{A}$ on $A$ is isomorphic to $\free{X}/\frakI_A$.
\end{proof}

We will next address the following natural questions from Proposition~\mref{pp:atda}:
If an algebra or commutative algebra $A$ has a certain canonical basis, how to use this basis to derive a canonical basis of the free differential algebra $\freed{A}$ or differential commutative algebra $\freecd{A}$ that $A$ generates?

To make this question more precise and more practical to solve, we use the method of Gr\"obner bases and more generally Gr\"obner-Shirshov bases, beginning with rephrasing the above question in two parts:

\begin{question}
\begin{enumerate}
\item
If a (resp. commutative) algebra $A$ has a presentation $A=\bk\langle X\rangle/I_A$ (resp. $A=\bk[X]/I_A$) such that the ideal $I_A$ has a Gr\"obner basis or Gr\"{o}bner-Shirshov basis $S$, how to extend $S$ to a Gr\"{o}bner-Shirshov basis for the free differential (resp. commutative) algebra $\freed{A}$ (resp. $\freecd{A}$) on $A$?
\mlabel{it:basis1}
\item
Further, if the Gr\"obner-Shirshov basis $S$ gives a canonical basis of $A$, how to use this basis to derive a canonical basis of $\freed{A}$ (resp. $\freecd{A}$)?
\mlabel{it:basis2}
\end{enumerate}
\mlabel{qu:basis}
\end{question}

We will answer Question~\mref{qu:basis} in Section~\mref{sec:bases}, with the technical tools on composition-diamond lemmas for Gr\"obner-Shirshov bases provided in the next subsection.

\subsection{Gr\"{o}bner-Shirshov bases for differential algebras}
\mlabel{ss:gsda}

We will treat the cases of noncommutative differential algebra and commutative differential algebras separately.

For the noncommutative case, the Composition-Diamond (CD) Lemma for free nonunitary differential algebras on sets are given in \cite[Theorem 5.1]{BCQ}.
Further, the composition-diamond (CD) lemma for free nonunitary differential algebras with extra operations are obtained in~\cite{QC}. Here we apply it to the case when there is no extra operations on the differential algebra.
We will recall the notions and results for  applications in later sections.

On the other hand, the CD lemma for free commutative differential algebras has not been established and will be treated in more detail.

\subsubsection{Gr\"{o}bner-Shirshov bases for free differential algebras}

For background, we first review the classical notion of Gr\"{o}bner (-Shirshov) bases for associative algebras~\cite{Ber,Bok,BCM}. The readers can also find the details in \cite[\S 2.1]{BC}.

Denote by $M(X)$ (resp. $S(X)$) the free monoid (resp. semigroup) generated by a set $X$. Any well-order $<$ on $X$ induces a monomial order $<$ on $M(X)$, e.g. the deg-lex order, such that
\[1<u\text{ and }u<v\,\Rightarrow\,wuz<wvz\text{ for any }u,v,w,z\in S(X).\]
Let $\oline{f}\in M(X)$ be the leading term
of any $f\in\bk\langle X\rangle$ with respect to this well-order $<$. A {\bf Gr\"{o}bner-Shirshov basis} in $\bk\langle
X\rangle$ is a subset of monic polynomials whose intersection and inclusion compositions are trivial modulo it.

\begin{lemma} \cite{Ber,BC}
	{\em (Composition-Diamond lemma for associative algebras)} \ Let $S$ be a monic subset of $\bk\langle
	X\rangle$, $\ideal(S)$ be the ideal of $\bk\langle
	X\rangle$ generated by $S$. Let $<$ be a monomial order on $M(X)$. Then the following statements are equivalent:	
	\begin{enumerate}
		\item the set $S$ is a Gr\"{o}bner-Shirshov basis in $\bk\langle
		X\rangle$;		
		\item if $f\in \ideal(S)\backslash\{0\}$, then $\oline{f}=u{\oline{s}}v$ for some $u,v\in M(X)$ and $s\in S$;
		\item the set of {\bf $S$-irreducible words} $\Irr(S): = M(X)\backslash\{u\oline{s}v\,|\,u,v\in M(X),\,s\in S \}$
		is a $\bk$-basis of $\bk\langle X\,|\,S\rangle=\bk\langle X\rangle/\ideal(S)$. Therefore, $\bk\langle X\rangle=\bk\Irr(S)\oplus\ideal(S)$.
	\end{enumerate}
	\mlabel{cdla}
\end{lemma}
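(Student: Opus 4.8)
The plan is to prove the stated equivalences via the cyclic chain $(1)\Rightarrow(2)\Rightarrow(3)\Rightarrow(1)$, which is the classical template for diamond-lemma arguments. The one structural ingredient I would invoke repeatedly is that the monomial order $<$ is a \emph{well-order} on $M(X)$, so that Noetherian induction on leading monomials, and termination of reduction, are both available.

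For $(1)\Rightarrow(2)$, which I expect to be the crux, I would argue by a minimal-representation method. Given $0\neq f\in\ideal(S)$, write $f=\sum_i c_i\,u_i s_i v_i$ with $c_i\in\bk$, $u_i,v_i\in M(X)$, $s_i\in S$, and set $w=\max_i u_i\oline{s_i}v_i$. Among all such representations I would choose one with $w$ minimal and, among those, with the number of indices attaining $u_i\oline{s_i}v_i=w$ minimal. If $\oline f=w$ then $\oline f=u_i\oline{s_i}v_i$ for some $i$ and (2) holds. Otherwise $\oline f<w$ forces at least two top terms, say $u_1 s_1 v_1$ and $u_2 s_2 v_2$ with $u_1\oline{s_1}v_1=u_2\oline{s_2}v_2=w$; the leading monomials $\oline{s_1},\oline{s_2}$ then sit inside $w$ either disjointly, nested one within the other, or with a genuine overlap. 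The disjoint case is settled by reordering the two reductions, whereas the nested and overlapping cases are governed respectively by the triviality of the corresponding inclusion and intersection compositions, which lets the combination $c_1 u_1 s_1 v_1+c_2 u_2 s_2 v_2$ be re-expressed as a $\bk$-combination of terms $u' s' v'$ with $u'\oline{s'}v'<w$. In every case the maximal monomial $w$, or the number of top terms, strictly drops, contradicting minimality; hence $\oline f=w$ and (2) follows.

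For $(2)\Rightarrow(3)$, spanning is immediate by Noetherian induction: a reducible word $u\oline s v$ may be replaced modulo $\ideal(S)$ by $u\oline s v-u s v$ (rescaled by the leading coefficient of $s$), a combination of strictly smaller words, so iterated reduction expresses every class in $\bk\langle X\rangle/\ideal(S)$ through $\Irr(S)$. For independence, suppose a nonzero combination $g=\sum c_i w_i$ of distinct words in $\Irr(S)$ lay in $\ideal(S)$; then $\oline g$ equals one of the $w_i$ and so is $S$-irreducible, yet (2) forces $\oline g=u\oline s v$ to be reducible, a contradiction. Thus $\Irr(S)$ maps to a $\bk$-basis of the quotient and $\bk\langle X\rangle=\bk\Irr(S)\oplus\ideal(S)$.

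For $(3)\Rightarrow(1)$, I would take any inclusion or intersection composition $C$; by construction $C\in\ideal(S)$ and its leading term lies strictly below the overlap monomial $w$ defining it. Fully reducing $C$ modulo $S$ terminates (well-order again) at some $C'\in\bk\Irr(S)$ with $C-C'\in\ideal(S)$, and since reduction never raises the leading monomial, all reduction terms $u' s' v'$ satisfy $u'\oline{s'}v'<w$. Then $C'\in\ideal(S)\cap\bk\Irr(S)=\{0\}$ by the direct-sum decomposition of (3), so $C$ reduces to $0$, i.e.\ every composition is trivial modulo $S$ and $S$ is a Gr\"obner-Shirshov basis. The only genuinely delicate point, and the one I would treat most carefully, is the overlap bookkeeping in $(1)\Rightarrow(2)$; the remaining implications are essentially formal consequences of the well-ordering together with the span/independence dichotomy.
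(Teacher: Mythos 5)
Your proposal is correct and takes essentially the standard route: the paper itself quotes this lemma from \cite{Ber,BC} without proof, and your cycle (i)$\Rightarrow$(ii)$\Rightarrow$(iii)$\Rightarrow$(i) --- the minimal-representation argument with the disjoint/nested/overlapping case analysis for (i)$\Rightarrow$(ii), reduction plus the leading-word dichotomy for (ii)$\Rightarrow$(iii), and reduction of each composition into $\bk\Irr(S)\cap\ideal(S)=\{0\}$ for (iii)$\Rightarrow$(i) --- is exactly the classical argument of those references. It is also the same template the paper uses when it does write out a proof of the analogous composition-diamond lemma for commutative differential algebras (Lemma~\ref{ccdld}), so there is nothing to flag beyond routine bookkeeping.
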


We now extend a given well-order $<$ on $X$ to a well-order $\prec$ on $\Delta(X)$ by
\begin{equation}
 x^{(m)}\prec y^{(n)} \text{ if } m<n, \text{ or } m=n \text{ and } x<y \tforall x, y\in X, m, n\geq0.
 \mlabel{eq:diffmon}
\end{equation}
The latter order induces the {\bf deg-lex order} on $M(\Delta(X))$ still denoted by $\prec$ with the {\bf degree} $|u|$ of $u\in M(\Delta(X))$ defined to be the number of letters in $u$ from the alphabet set $\Delta(X)$. Then for $u_1,\dots, u_p,v_1,\dots,v_q\in\Delta(X)$, we define
\[u_1\cdots u_p\prec v_1\cdots v_q\,\Longleftrightarrow\, p>q\text{ or }p=q,\,u_i=v_i,\,i=1,\dots,k-1,\text{ but }u_k\prec v_k\text{ for some }k\leq p.\]

\begin{defn}\mlabel{def:uword}
Let $\star\notin \Delta(X)$ be an extra symbol, $\Delta(X)^\star:=\Delta(X)\sqcup\{\star\}$ and \[S(\Delta(X))^\star:=\left\{u\star v\in S(\Delta(X)^\star)\,\big|\,u,v\in M(\Delta(X))\right\}.\]
For any $q\in S(\Delta(X))^\star$ and $u\in\bk\langle\Delta(X)\rangle$, define the {\bf $u$-word} $q|_u:=q|_{\star\mapsto u}$ to be the polynomial in $\bk\langle\Delta(X)\rangle$ obtained by replacing the symbol $\star$ in $q$ with $u$ and then expanding the result by linearity.
\end{defn}

\begin{defn}
Let $\oline{f}$ denote the {\bf leading} monomial word of $f\in\bk\langle\Delta(X)\rangle$ with respect to $\prec$.
Let lc$(f)$ be the leading coefficient of $f$, and denote
\[f^\natural:=\mbox{lc}(f)^{-1}f,\]
when $f\neq0$.
Then $f^\natural$ is a monic polynomial in $\bk\langle\Delta(X)\rangle$. In particular, $\oline{q|_u}=q|_{\oline{u}}$ for a $u$-word $q|_u$.
\mlabel{de:lead}
\end{defn}

Next we list some lemmas for words in $S(\Delta(X))$ in analog to \cite{QC}.
\begin{lemma}\label{lead}
Let $u_1,\dots,u_r\in \Delta(X)$ and $i\geq0$.
\begin{enumerate}
\item If weight $\lambda\neq0$, then the leading term of $d_X^i(u_1\cdots u_r)$ is  $\oline{d_X^i(u_1\cdots u_r)}=d_X^i(u_1)\cdots d_X^i(u_r)$ and its coefficient in $d_X^i(u_1\cdots u_r)$ is $\lambda^{(r-1)i}$.
\mlabel{lead1}
\item If weight $\lambda=0$, then the leading term of $d_X^i(u_1\cdots u_r)$ is $\oline{d_X^i(u_1\cdots u_r)}=d_X^i(u_1)u_2\cdots u_r$ and its coefficient in $d_X^i(u_1\cdots u_r)$ is 1. 	 \mlabel{lead2}
\end{enumerate}	
In particular, if $u,v\in S(\Delta(X))$ and $u\prec v$, then $d_X(u)\prec d_X(v)$.
\end{lemma}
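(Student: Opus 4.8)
The first move is to reduce the computation of the leading term to a purely lexicographic problem. Since each $u_\nu\in\Delta(X)$ is a single letter, say $u_\nu=x_\nu^{(m_\nu)}$, we have $d_X^{a}(u_\nu)=x_\nu^{(m_\nu+a)}$, again a single letter. Expanding $d_X^i(u_1\cdots u_{r})$ via the recursive rule \eqref{dprod} (equivalently via the Leibniz formula \eqref{eq:der2}) therefore produces a $\bk$-linear combination of monomials of the form $d_X^{a_1}(u_1)\cdots d_X^{a_r}(u_r)=x_1^{(m_1+a_1)}\cdots x_r^{(m_r+a_r)}$ with $0\le a_\nu\le i$. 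In particular \emph{every} monomial that occurs has degree exactly $r$, so by the definition of the deg-lex order these monomials are compared purely lexicographically; moreover distinct tuples $(a_1,\dots,a_r)$ give distinct words, so no cancellation can occur and it suffices to locate the lexicographically largest monomial and record the coefficient attached to it. The auxiliary bound $a_\nu\le i$ (a single application of $d_X$ raises the order of any one factor by at most $1$) I would establish by a one-line induction on $i$ from \eqref{dprod}.

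The heart of the argument is then an induction on $r$ using the two-factor formula \eqref{eq:der2} with $x=u_1$ and $y=u_2\cdots u_r$:
\[
d_X^i(u_1\cdots u_r)=\sum_{i=j+k+\ell}\binom{i}{j,k,\ell}\lambda^{j}\,d_X^{i-k}(u_1)\,d_X^{j+k}(u_2\cdots u_r).
\]
When $\lambda=0$ only the $j=0$ terms survive, recovering the ordinary multinomial expansion with $\sum_\nu a_\nu=i$; lexicographic maximality forces $a_1=i$, hence $a_2=\cdots=a_r=0$, and the multinomial coefficient $\binom{i}{i,0,\dots,0}=1$, giving part \eqref{lead2}. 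When $\lambda\neq0$ the leading letter of each summand is $x_1^{(m_1+i-k)}$, which is largest precisely when $k=0$; among the $k=0$ summands the tail is governed by $d_X^{j}(u_2\cdots u_r)$, whose leading letter is $x_2^{(m_2+j)}$ by the inductive hypothesis and is largest when $j=i$ (so $\ell=0$). Thus the leading monomial is $x_1^{(m_1+i)}\cdot\oline{d_X^i(u_2\cdots u_r)}=d_X^i(u_1)\cdots d_X^i(u_r)$, and its coefficient accumulates as $\binom{i}{i,0,0}\lambda^{i}\cdot\lambda^{(r-2)i}=\lambda^{(r-1)i}$, proving part \eqref{lead1}.

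The step I expect to be the \textbf{main obstacle} is confirming that $(j,k,\ell)=(i,0,0)$ is the unique source of this top monomial, so that the displayed coefficient is genuinely the leading coefficient with no hidden cancellation. The key points are that any $k>0$ strictly lowers the first index below $m_1+i$, and that inside $d_X^{j}(u_2\cdots u_r)$ every factor's order is at most $j\le i$, so the ``all orders $+i$'' tail can appear only for $j=i$ and only as the (unique) leading monomial of that subexpression. One may also read both parts off directly from the multi-factor formula \eqref{dm} by the same optimization (set all $k_\nu=0$, all $j_\nu=n$, all $\ell_\nu=0$), which I would mention as an alternative derivation.

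Finally, the ``in particular'' clause follows by specializing to $i=1$. Both parts show that $d_X$ sends the leading monomial of a word $u\in S(\Delta(X))$ to a monomial of the \emph{same} length, obtained either by incrementing every index (case $\lambda\neq0$) or only the first index (case $\lambda=0$). If $|u|>|v|$, the leading terms retain lengths $|u|>|v|$, so $\oline{d_X(u)}\prec\oline{d_X(v)}$ by the degree clause of the order. If $|u|=|v|$ and $u\prec v$ with first differing position $k$, then since $x^{(a)}\prec y^{(b)}$ implies $x^{(a+1)}\prec y^{(b+1)}$ (incrementing the upper index preserves $\prec$ by \eqref{eq:diffmon}), the first point of difference is preserved after applying $d_X$, again giving $\oline{d_X(u)}\prec\oline{d_X(v)}$; in the $\lambda=0$ case one checks the subcases $k=1$ and $k>1$ separately, both routine.
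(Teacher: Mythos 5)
Your proposal is correct and follows essentially the approach the paper intends: the paper states this lemma without proof (its commutative analogue, Lemma~\ref{clead}, is dismissed as ``easy to verify by Eq.~\eqref{dm} and the definition of the monomial order''), and your argument is precisely that verification carried out in detail --- expand by the Leibniz formula, note that all resulting words have degree $r$ so comparison is purely lexicographic, and check that the candidate top word arises from the unique configuration $(j,k,\ell)=(i,0,0)$ with coefficient $\lambda^{(r-1)i}$ (resp.\ $1$). The only point worth tightening is your phrase ``no cancellation can occur'': distinct tuples do give distinct words, but a single word can receive contributions from several $(j,k,\ell)$-configurations, so the real reason the leading coefficient is as claimed is the uniqueness-of-source argument you give later, not mere distinctness of tuples.
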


\begin{lemma}\label{mono}
For any $u,v\in S(\Delta(X))$ and $q\in S(\Delta(X))^\star$,  if $u\prec v$, then $\oline{q|_u}\prec\oline{q|_v}$. Consequently, the deg-lex order $\prec$ on $M(\Delta(X))$ is a monomial order.
\end{lemma}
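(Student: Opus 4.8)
The plan is to reduce everything to a direct comparison of $u$ and $v$. Since $u,v\in S(\Delta(X))$ are monomials, we have $\oline u=u$ and $\oline v=v$, so by the identity $\oline{q|_u}=q|_{\oline u}$ recorded in Definition~\ref{de:lead} it suffices to prove $q|_u\prec q|_v$. Writing $q=a\star b$ with $a,b\in M(\Delta(X))$, the substitution of Definition~\ref{def:uword} produces the genuine words $q|_u=aub$ and $q|_v=avb$ in $M(\Delta(X))$, so the task reduces to the two-sided multiplicativity statement $u\prec v\Rightarrow aub\prec avb$.

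I would then split according to which clause of the definition of the deg-lex order $\prec$ witnesses $u\prec v$. Write $u=u_1\cdots u_p$, $v=v_1\cdots v_s$, $a=a_1\cdots a_m$, $b=b_1\cdots b_n$ with all letters in $\Delta(X)$. If $u\prec v$ holds because $p>s$, then $|aub|=m+p+n>m+s+n=|avb|$, so $aub\prec avb$ already at the level of degrees. If instead $p=s$ and the lexicographic clause applies, say $u_i=v_i$ for $i<k$ and $u_k\prec v_k$ for some $k\le p$, then $|aub|=|avb|$, the two words agree on their first $m$ letters (the block $a$), agree on the next $k-1$ letters (since $u_i=v_i$), and first differ at position $m+k$, where $u_k\prec v_k$. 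Hence the lexicographic clause again gives $aub\prec avb$. The only point requiring care is exactly this bookkeeping: one must check that inserting the common prefix $a$ and suffix $b$ neither disturbs the degree comparison nor moves the first point of lexicographic disagreement, which is guaranteed because $u$ and $v$ have equal length in the second case. This completes the first assertion.

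Finally, for the consequence that $\prec$ is a monomial order, I would observe that the required multiplicative compatibility $u\prec v\Rightarrow wuz\prec wvz$ for $w,z\in M(\Delta(X))$ is precisely the special case $q=w\star z$ of the assertion just proved. Together with the facts that $\prec$ is a total order and that it is a deg-lex order built on the alphabet $(\Delta(X),\prec)$ — which is well-ordered as the lexicographic product of the orders on $\NN$ and $X$ under the rule~\eqref{eq:diffmon} — this yields all the defining properties of a monomial order. I expect no serious obstacle here; the entire argument is a careful unwinding of the deg-lex definition, the only mildly delicate step being the verification that substitution into $q=a\star b$ preserves the location of the first lexicographic disagreement in the equal-degree case.
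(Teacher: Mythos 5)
Your proof is correct, and it supplies an argument where the paper gives none: Lemma~\ref{mono} is stated there as one of several lemmas ``in analog to \cite{QC}'', with no proof written out. Your route is the natural self-contained one. Since $u,v\in S(\Delta(X))$ are single words, $q|_u=aub$ and $q|_v=avb$ are again single words for $q=a\star b$, so $\oline{q|_u}=aub$ and $\oline{q|_v}=avb$ outright; note that this makes your appeal to the identity $\oline{q|_u}=q|_{\oline{u}}$ of Definition~\ref{de:lead} a tautology rather than a genuine citation, which is just as well, since the general form of that identity presupposes exactly the monotonicity being proved here. The reduction to two-sided multiplicativity $u\prec v\Rightarrow aub\prec avb$, and its verification via the two clauses of the deg-lex definition (degree comparison, then first point of lexicographic disagreement shifted by $|a|$), is complete, and the ``consequently'' clause is indeed just the specialization $q=w\star z$.

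One caveat, inherited from the paper rather than introduced by you: the paper's displayed definition of deg-lex on $M(\Delta(X))$ has the degree clause $p>q\Rightarrow u_1\cdots u_p\prec v_1\cdots v_q$, i.e.\ longer words are \emph{smaller}, and you copied this faithfully in your first case. This contradicts the commutative counterpart \eqref{eq:cmon} and, read literally, would prevent $\prec$ from being a monomial order in the sense recalled before Lemma~\ref{cdla}: one would have $u\prec 1$ for every nonempty word $u$, and $x^{(0)}\succ (x^{(0)})^2\succ (x^{(0)})^3\succ\cdots$ would violate well-orderedness. The clause is evidently a typo for $p<q$. Your two-case bookkeeping is insensitive to which direction is used, but your final paragraph --- asserting that well-orderedness of $(\Delta(X),\prec)$ under \eqref{eq:diffmon} yields ``all the defining properties of a monomial order'' --- is only true with the corrected direction: then degrees lie in $\NN$, each fixed degree is lexicographically well-ordered over the well-ordered alphabet $\Delta(X)$, and $1$ is the minimum, giving both $1\prec u$ and the well-order property. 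With that reading your proof is complete.
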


In order to define the Gr\"{o}bner-Shirshov bases in $\bk\langle\Delta(X)\rangle$, we need the concepts of intersection compositions and inclusion compositions of polynomials to involve the derivation $d_X$. In particular, by Eq. (\mref{diff}) we can use the notation of $u$-words in Definition \mref{def:uword} to define the inclusion compositions, analogous to those defined in \mcite{QC} via normal $u$-words.

\begin{defn}
We continue with the notation in Definition \mref{de:lead}.	
\begin{enumerate}
\item
For monic polynomials $f,g$ in $\bk\langle\Delta(X)\rangle$, if there exist $u,v,w_{i,j}\in M(\Delta(X))$ with $i,j\geq0$ such that $w_{i,j}=\oline{d_X^i(f)}u=v\oline{d_X^j(g)}$ and $|w_{i,j}|<|\oline{f}|+|\oline{g}|$, then we define
\[(f,g)_{w_{i,j}}^{u,v}:=d_X^i(f)^\natural u-vd_X^j(g)^\natural,\]
and call it an {\bf intersection composition} of $f$ and $g$ with respect to $w_{i,j}$.
\item
If there exist $w_{i,j}\in M(\Delta(X)),q\in S(\Delta(X))^\star$ with $i,j\geq0$ such that $w_{i,j}=\oline{d_X^i(f)}=q|_{\oline{d_X^j(g)}}$,
then we define
\[(f,g)_{w_{i,j}}^q:=d_X^i(f)^\natural-q|_{d_X^j(g)^\natural},\]
and call it an {\bf inclusion composition} of $f$ and $g$ with respect to $w_{i,j}$.
\end{enumerate}
\end{defn}

\begin{defn}\mlabel{gbda}
	Let $S$ be a set of monic polynomials in $\bk\langle\Delta(X)\rangle$ and $w\in M(\Delta(X))$. 	
	\begin{enumerate}
\item An element $u\in\bk\langle \Delta(X)\rangle$ is called {\bf trivial modulo $(S,w)$} and denoted
     $$u\equiv 0 \mod (S,w),$$
if $u=\sum_r c_rq_r|_{d_X^{k_r}(s_r)}$ with $c_r\in\bk$, $q_r\in S(\Delta(X))^\star$, $s_r\in S$ and $k_r\geq 0$ such that		
		$q_r|_{\oline{d_X^{k_r}(s_r)}}\prec w$ for any $r$.
For $u, v\in \bk\langle \Delta(X)\rangle$, denote $u\equiv v \mod (S,w)$ if $u-v\equiv 0 \mod (S,w)$.
\mlabel{it:gbda1}		
		\item For $f,g\in\bk\langle \Delta(X)\rangle$ and $u,v,w_{i,j}\in M(\Delta(X)),q\in S(\Delta(X))^\star$ that give an intersection composition $(f,g)_{w_{i,j}}^{u,v}$ or an inclusion composition $(f,g)_{w_{i,j}}^q$, this composition is called {\bf trivial modulo} $(S,w_{i,j})$, if
		\[(f,g)_{w_{i,j}}^{u,v} \text{ or } (f,g)_{w_{i,j}}^q\equiv 0\mod(S,w_{i,j}).\]
		
		\item The set $S$ is called a {\bf (differential) Gr\"{o}bner-Shirshov basis} if for any $f,g\in S$, all compositions $(f,g)^{u,v}_{w_{i,j}}$ or $(f,g)_{w_{i,j}}^q$ are trivial modulo $(S,w_{i,j})$. Moreover, a Gr\"{o}bner-Shirshov basis $S$ in $\bk\langle \Delta(X)\rangle$ is called {\bf minimal} if $S$ does not have inclusion compositions. It is called {\bf reduced} if every $f\in S$ is a linear combination of monomials in
		\[M(\Delta(X))\backslash \big\{ q|_{\oline{d_X^j(g)}}\,\left|\,q\in S(\Delta(X))^\star, g\in S\backslash\{d_X^i(f)\,|\,i\geq0\},j\geq0\right.\big\}.\]
		More precisely, when $f\in S$ is expressed as a linear combination of the basis $M(\Delta(X))$, the basis elements can not come from $\big\{ q|_{\oline{d_X^j(g)}}\,\left|\,q\in S(\Delta(X))^\star, g\in S\backslash\{d_X^i(f)\,|\,i\geq0\},j\geq0\right.\big\}$.
	\end{enumerate}
\end{defn}

The following CD lemma in $\bk\langle\Delta(X)\rangle$ is the special case of \cite[Theorem 3.6]{QC} when their operator set $\Omega$ is empty.
\begin{lemma}\mlabel{cdld}
	{\em(Composition-diamond lemma for differential algebras)} \ Let $S$ be
	a monic subset of  $\bk\langle\Delta(X)\rangle$, $\DI(S)$ be the differential ideal of
	$\bk\langle\Delta(X)\rangle$ generated by $S$ and
	$\prec$ be the order on $M(\Delta(X))$ defined in Eq.~\eqref{eq:diffmon}. Then the following
	statements are equivalent.
	\begin{enumerate}
		\item The set $S $ is a Gr\"{o}bner-Shirshov basis in  $\bk\langle\Delta(X)\rangle$; \mlabel{it:cdld1}
		\item If $f\in\DI(S)\backslash\{0\}$, then
		$\bar{f}=q|_{\overline{d_X^i(s)}}$  for some $q \in
		S(\Delta(X))^\star$, $s\in S$ and $i\geq 0$;
\mlabel{it:cdld2}
		\item \mlabel{it:cdld3}
The set of {\bf differential $S$-irreducible words}
$$\DIrr(S) := M(\Delta(X))\backslash \Big\{\left . q|_{\overline{d_X^i(s)}}\,\right |\, s\in S,\ q\in S(\Delta(X))^\star,\,i\geq0
\big\}$$
is a $\bk$-basis of $\bk\langle\Delta(X)\,|\,S\rangle:=\bk\langle\Delta(X)\rangle/\DI(S)$. In particular, $\bk\langle\Delta(X)\rangle=\bk\DIrr(S)\oplus\DI(S)$.

	\end{enumerate}

\end{lemma}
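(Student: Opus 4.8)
The plan is to deduce this differential composition-diamond lemma from its associative counterpart, Lemma~\ref{cdla}, applied not to $S$ itself but to its full set of normalized derivatives
\[\widetilde{S}:=\left\{d_X^i(s)^\natural\,\big|\,s\in S,\ i\geq0\right\}\subseteq\bk\langle\Delta(X)\rangle,\]
regarded as an ordinary monic subset (each $d_X^i(s)$ is nonzero by Lemma~\ref{lead}, so the normalization is defined). The bridge between the differential and the associative settings is Eq.~\eqref{dix}: as an ordinary two-sided ideal one has $\DI(S)=\ideal(\widetilde{S})$, since passing from each $d_X^i(s)$ to its monic normalization does not change the ideal generated. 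Thus the differential ideal problem is literally an ordinary ideal problem for $\widetilde{S}$, and it remains to check that each ingredient of the present lemma translates into the corresponding ingredient of Lemma~\ref{cdla} for $\widetilde{S}$.

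First I would match the combinatorial data. Writing $q=u\star v$ with $u,v\in M(\Delta(X))$, one has $q|_w=uwv$ and, by the leading-term identity $\oline{q|_w}=q|_{\oline{w}}$ of Definition~\ref{de:lead} together with Lemma~\ref{mono}, $\oline{q|_{d_X^i(s)}}=u\,\oline{d_X^i(s)}\,v$. Consequently the forbidden monomials $q|_{\oline{d_X^i(s)}}$ defining $\DIrr(S)$ are exactly the monomials $u\,\oline{\widetilde{s}}\,v$ with $\widetilde{s}\in\widetilde{S}$, so $\DIrr(S)=\Irr(\widetilde{S})$; likewise statement~\eqref{it:cdld2} is verbatim statement (ii) of Lemma~\ref{cdla} for $\widetilde{S}$, and statement~\eqref{it:cdld3} is its statement (iii). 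Next I would reconcile the two notions of triviality: since $q_r|_{d_X^{k_r}(s_r)}=\mathrm{lc}(d_X^{k_r}(s_r))\,q_r|_{d_X^{k_r}(s_r)^\natural}$ and the leading word of this term is $q_r|_{\oline{d_X^{k_r}(s_r)}}$, an element is trivial modulo $(S,w)$ in the differential sense precisely when it is trivial modulo $(\widetilde{S},w)$ in the ordinary sense.

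The crux is then to identify compositions. For fixed $f,g\in S$ (the case $f=g$ included) and all $i,j\geq0$, the polynomials $d_X^i(f)^\natural$ and $d_X^j(g)^\natural$ range over all pairs from $\widetilde{S}$; their ordinary overlap compositions split into the two classical types — a proper suffix of $\oline{d_X^i(f)}$ meeting a proper prefix of $\oline{d_X^j(g)}$, or one of the two leading words containing the other — which are exactly the intersection composition $(f,g)^{u,v}_{w_{i,j}}$ and the inclusion composition $(f,g)^{q}_{w_{i,j}}$ of the differential definition. Hence ``$S$ is a differential Gr\"obner--Shirshov basis'' is equivalent to ``$\widetilde{S}$ is an associative Gr\"obner--Shirshov basis'', so statement~\eqref{it:cdld1} matches statement (i) of Lemma~\ref{cdla} for $\widetilde{S}$, and the full equivalence drops out of Lemma~\ref{cdla}.

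I expect the only genuinely delicate point to be this last correspondence of compositions, for two reasons: the set $\widetilde{S}$ is infinite, so one must verify that ranging over all $i,j$ captures every overlap among the derivatives while producing nothing outside the two listed types; and the shape of $\oline{d_X^i(f)}$ depends on the weight (Lemma~\ref{lead}), the derivative falling on every letter when $\lambda\neq0$ but only on the first letter when $\lambda=0$. Since in either regime $\oline{d_X^i(f)}$ is an honest monomial of $M(\Delta(X))$, the standard suffix/prefix and subword overlap analysis applies unchanged, so this is careful bookkeeping rather than a new difficulty. Alternatively one may simply quote the operated composition-diamond lemma of \cite{QC} with empty operator set, but the reduction above stays within the material already developed here.
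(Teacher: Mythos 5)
Your argument is correct, but it takes a genuinely different route from the paper, which offers no proof at all: the paper simply declares Lemma~\mref{cdld} to be the special case of the composition-diamond lemma for $\lambda$-differential algebras with multiple operators of \mcite{QC} when the operator set $\Omega$ is empty. Your reduction, by contrast, is self-contained relative to the material of Section~\mref{ss:gsda}: since $\bk\langle\Delta(X)\rangle$ is at once the free differential algebra on $X$ and the free associative algebra on $\Delta(X)$, Eq.~\eqref{dix} identifies $\DI(S)$ with the ordinary ideal $\ideal(\widetilde{S})$ of the normalized derivative set $\widetilde{S}=\left\{d_X^i(s)^\natural\,\big|\,s\in S,\ i\geq0\right\}$, and under your dictionary each of the three statements of Lemma~\mref{cdld} becomes verbatim the corresponding statement of Lemma~\mref{cdla} for $\widetilde{S}$. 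The crux you single out --- that differential compositions of $S$ are exactly the associative compositions of $\widetilde{S}$ --- does go through, and two bookkeeping facts make the match exact and deserve explicit mention: first, differentiation preserves the degree of leading words in both weight regimes (Lemma~\mref{lead}), so the differential overlap bound $|w_{i,j}|<|\oline{f}|+|\oline{g}|$ coincides with the associative bound $|w|<|\oline{d_X^i(f)}|+|\oline{d_X^j(g)}|$; second, the two triviality notions agree after absorbing the nonzero leading coefficients $\mathrm{lc}(d_X^{k_r}(s_r))$ into the scalars, exactly as you say. What your route buys is conceptual: it shows why the differential setting requires no new diamond machinery --- the derivation creates no nested operated monomials, so differential Gr\"obner-Shirshov theory over $\Delta(X)$ is ordinary Gr\"obner-Shirshov theory for the infinite, derivative-closed set $\widetilde{S}$, in contrast to genuinely operated structures such as Rota-Baxter algebras. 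What the paper's citation buys is brevity and consistency with the operated-algebra framework of \mcite{QC}, from which its definitions of differential compositions are drawn.
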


\subsubsection{Gr\"{o}bner-Shirshov bases for free commutative differential algebras}
Next we turn to the commutative case, beginning with reviewing the classical notion of Gr\"{o}bner bases~\cite{Buc}. See also~\cite[\S 2.2]{BC}.

Denote by $[X]$ the free commutative monoid on $X$. Any well-order $<$ on $X$ induces a monomial order $<$ on $[X]$ satisfying
\[1<u\text{ and }u<v\,\Rightarrow\,uw<vw\text{ for any }u,v,w\in [X]\backslash \{1\}.\]
Let $\oline{f}\in [X]$ be the leading term
of $f\in\bk[X]$ with respect to this order $<$. A {\bf Gr\"{o}bner(-Shirshov) basis} in $\bk[X]$ can also be defined by the triviality of its (intersection) compositions.
Then an argument similar to the proof of the CD Lemma \ref{cdla} establishes
the following classical result of Buchberger \cite{Buc}.

\begin{lemma}
	{\em (Composition-Diamond lemma for commutative algebras)} \ Let $S$ be a set of monic polynomials in $\bk[X]$, $\ideal(S)$ be the ideal of $\bk[X]$ generated by $S$. Let $<$ be a monomial order on $[X]$. Then the following statements are equivalent:	
	\begin{enumerate}
		\item the set $S$ is a Gr\"{o}bner(-Shirshov) basis in $\bk[X]$;		
		\item if $f\in \ideal(S)\backslash\{0\}$, then $\oline{f}=\oline{s}u$	for some $u\in[X]$ and $s\in S$;
		\item the set of {\bf $S$-irreducible words} $\Irr(S): = [X]\backslash\left\{\oline{s}u\,\big|\,u\in[X],s\in S\right\}$
		is a $\bk$-basis of  $\bk[X\,|\,S]=\bk[X]/\ideal(S)$. In particular, $\bk[X]=\bk\Irr(S)\oplus\ideal(S)$.
	\end{enumerate}
	\mlabel{ccdla}
\end{lemma}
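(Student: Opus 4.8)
The plan is to establish the cyclic chain of implications (i)~$\Rightarrow$~(ii)~$\Rightarrow$~(iii)~$\Rightarrow$~(i), following the scheme of the proof of Lemma~\ref{cdla} but replacing the noncommutative overlaps of leading words by least common multiples of leading monomials, which is the one place where the commutative setting genuinely differs. For (i)~$\Rightarrow$~(ii): given $0\neq f\in\ideal(S)$, I would write $f=\sum_i c_i s_i u_i$ with $c_i\in\bk\setminus\{0\}$, $s_i\in S$ and $u_i\in[X]$, and attach to each such representation the pair $(w,N)$, where $w=\max_i \oline{s_i}u_i$ is the largest leading monomial occurring and $N$ counts the indices attaining it. Ordering these pairs lexicographically (legitimate since $<$ well-orders $[X]$), I pick a representation minimizing $(w,N)$. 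Because $\oline f\leq w$ always, it suffices to force $\oline f=w$: if instead $\oline f<w$, the top monomials must cancel, so two indices, say $1,2$, satisfy $\oline{s_1}u_1=\oline{s_2}u_2=w$. Setting $w_0=\mrm{lcm}(\oline{s_1},\oline{s_2})$, so that $w_0\mid w$, the intersection composition $(s_1,s_2)_{w_0}$ has leading monomial $<w_0$, and triviality of $S$ expresses it as $\sum_r d_r t_r z_r$ with each $\oline{t_r}z_r<w_0$. Multiplying through by the monomial $w/w_0$ and substituting back, I replace the two top terms by terms with leading monomial $<w$, strictly decreasing $(w,N)$ — a contradiction. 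Hence $\oline f=\oline{s_i}u_i$ for some $i$, which is precisely (ii).

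For (ii)~$\Rightarrow$~(iii): a termination argument — repeatedly subtracting a scalar multiple $c\,s\,u$ to delete the largest monomial of the running polynomial that is divisible by some $\oline s$ — halts because $<$ is a well-order, and shows $\bk[X]=\bk\Irr(S)+\ideal(S)$. For directness, if $0\neq g\in\bk\Irr(S)\cap\ideal(S)$ then $\oline g\in\Irr(S)$ is divisible by no $\oline s$, contradicting (ii) applied to $g$; thus the sum is direct and $\Irr(S)$ projects to a basis of $\bk[X]/\ideal(S)$.

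For (iii)~$\Rightarrow$~(i): any composition $h=(f,g)_{w_0}$ lies in $\ideal(S)$ with $\oline h<w_0$, and the same reduction yields $h=\mrm{nf}(h)+\sum_r c_r s_r u_r$ with $\mrm{nf}(h)\in\bk\Irr(S)$ and all $\oline{s_r}u_r\leq\oline h<w_0$, while directness of (iii) forces $\mrm{nf}(h)=0$. Hence $h=\sum_r c_r s_r u_r$ with every $\oline{s_r}u_r<w_0$, so $h$ is trivial modulo $(S,w_0)$; as this holds for all compositions, $S$ is a Gr\"obner-Shirshov basis, completing the cycle.

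The hard part will be the cancellation bookkeeping in (i)~$\Rightarrow$~(ii): when several summands share the top monomial $w$, I must organize them into cancelling pairs and check that inserting the (rescaled) trivial expression for each composition truly lowers the well-founded measure $(w,N)$ rather than merely redistributing the top terms. The only other point requiring care is the termination of the monomial reduction, which rests on $<$ being a well-order (equivalently, Dickson's lemma when $X$ is finite).
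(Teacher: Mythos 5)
Your overall scheme---the cycle (i) $\Rightarrow$ (ii) $\Rightarrow$ (iii) $\Rightarrow$ (i), with a minimal-representation descent for the first implication---is exactly the one the paper intends (it cites this lemma as Buchberger's classical result, ``an argument similar to the proof of the CD Lemma~\ref{cdla}'', and carries out the same scheme in full for the differential analog, Lemma~\ref{ccdld}). But there is one genuine gap in your (i) $\Rightarrow$ (ii) step. When two summands share the top monomial, $\oline{s_1}u_1=\oline{s_2}u_2=w$, you set $w_0=\mathrm{lcm}(\oline{s_1},\oline{s_2})$ and invoke ``triviality of $S$'' for the composition $(s_1,s_2)_{w_0}$. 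In the paper's convention, however, an (intersection) composition exists only when the leading monomials genuinely overlap, i.e.\ $|w_0|<|\oline{s_1}|+|\oline{s_2}|$, equivalently $\gcd(\oline{s_1},\oline{s_2})\neq 1$; this degree restriction is explicit in the paper's definition of composition for $\bk[\Delta(X)]$ and is the meaning of ``intersection composition'' in the classical setting of \cite{Buc,BC}. So if $\oline{s_1}$ and $\oline{s_2}$ are coprime, there is no composition at $w_0$ at all, the Gr\"obner-Shirshov hypothesis gives you nothing to substitute, and your well-founded descent on $(w,N)$ stalls precisely there.

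The missing piece is Buchberger's first criterion, which the paper isolates (in the differential setting) as case (a) of Lemma~\ref{ckey0}: when $\oline{s_1},\oline{s_2}$ are coprime, so that $w_0=\oline{s_1}\,\oline{s_2}$, one has the identity
\[
s_1\oline{s_2}-s_2\oline{s_1}\;=\;-\,s_1\bigl(s_2-\oline{s_2}\bigr)+s_2\bigl(s_1-\oline{s_1}\bigr),
\]
and every monomial $z$ of $s_2-\oline{s_2}$ (resp.\ of $s_1-\oline{s_1}$) satisfies $\oline{s_1}z< w_0$ (resp.\ $\oline{s_2}z< w_0$) by multiplicativity of the monomial order, so this difference is trivial modulo $(S,w_0)$ \emph{unconditionally}, without any hypothesis on $S$. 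Inserting this case distinction---coprime leading monomials handled by the identity above, overlapping ones by the hypothesis on compositions, followed in both cases by your multiplication by $w/w_0$---makes the descent go through. The remainder of your argument ((ii) $\Rightarrow$ (iii) by reduction plus directness, and (iii) $\Rightarrow$ (i) by reducing a composition to its normal form and using directness to kill it) is correct and matches the corresponding steps in the paper's proof of Lemma~\ref{ccdld}.
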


Next for a well-ordered set $X$, let $[\Delta(X)]$ be the free commutative monoid \on $\Delta(X)$, endowed with the monomial order $\prec$ characterized by
\begin{equation}
u\prec v\,\Rightarrow\,uw\prec vw \text{ for any }u,v,w\in[\Delta(X)].
\mlabel{cmono}
\end{equation}
Starting with the order $\prec$ on $\Delta(X)$ as before, for any $\fraku:=u_1\cdots u_p$ and $\frakv:=v_1\cdots v_q\in[\Delta(X)]$ such that $u_1\succeq\cdots\succeq u_p,\,v_1\succeq\cdots\succeq v_q$, set
\begin{equation}
\fraku\prec\frakv\,\Leftrightarrow\,p<q\text{ or }p=q,\,u_1=v_1,\cdots,u_{k-1}=v_{k-1},\text{ but }u_k\prec v_k\text{ for some }k\leq p.
\mlabel{eq:cmon}
\end{equation}
Define the {\bf degree} $|u|$ of $u\in [\Delta(X)]$ to be the number of letters of $\Delta(X)$ in $u$, counting multiplicity.

Note that $[\Delta(X)]$ is also the monomial $\bk$-basis of $\bk[\Delta(X)]$ consisting of
\[x_{i_1}^{(j_1)}\cdots x_{i_n}^{(j_n)}\]
for any $x_{i_1}^{(j_1)},\dots, x_{i_n}^{(j_n)}\in\Delta(X),n\geq0$, such that  $x_{i_1}^{(j_1)}\succeq \cdots\succeq  x_{i_n}^{(j_n)}$.

As in the noncommutative case, we give
\begin{defn}
	Let $\oline{f}$ denote the {\bf leading} monomial word of $f\in\bk[\Delta(X)]$ with respect to $\prec$.
	Let lc$(f)$ be the leading coefficient of $f$, and denote
	\[f^\natural:=\mbox{lc}(f)^{-1}f\]
	when $f\neq0$, so that $f^\natural$ is a monic polynomial in $\bk[\Delta(X)]$.
	\mlabel{de:clead}
\end{defn}

Now we determine the leading terms and coefficients of differential words.
\begin{lemma}\label{clead}
	Let $u_1,\dots,u_r\in \Delta(X)$ satisfy $u_1\succeq\cdots\succeq u_r$ and $i\geq0$.
	\begin{enumerate}
		\item If weight $\lambda\neq0$, then the leading term of $d_X^i(u_1\cdots u_r)$ is  $\oline{d_X^i(u_1\cdots u_r)}=d_X^i(u_1)\cdots d_X^i(u_r)$ and its coefficient in $d_X^i(u_1\cdots u_r)$ is $\lambda^{(r-1)i}$.
		\mlabel{clead1}
		\item If weight $\lambda=0$, then the leading term of $d_X^i(u_1\cdots u_r)$ is $\oline{d_X^i(u_1\cdots u_r)}=d_X^i(u_1)u_2\cdots u_r$ and its coefficient in $d_X^i(u_1\cdots u_r)$ is the multiplicity of the letter $u_1$ in $u_1\cdots u_r$. 	
		\mlabel{clead2}
	\end{enumerate}	
	In particular, if $u,v\in [\Delta(X)]\backslash\{1\}$ and $u\prec v$, then $d_X(u)\prec d_X(v)$.
\end{lemma}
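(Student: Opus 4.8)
The plan is to expand $d_X^i(u_1\cdots u_r)$ by the higher Leibniz rule and then read off the $\prec$-maximal monomial, just as in the noncommutative Lemma~\ref{lead}, the new feature being that equal differential letters now collapse and so contribute multiplicities. Writing $u_j=x_{i_j}^{(n_j)}$, each $u_j$ is a single letter of $\Delta(X)$, so $d_X^{a}(u_j)=x_{i_j}^{(n_j+a)}$ is again a single letter; hence every monomial occurring in $d_X^i(u_1\cdots u_r)$ has the form $d_X^{a_1}(u_1)\cdots d_X^{a_r}(u_r)$ and carries exactly $r$ letters. Iterating Eq.~\eqref{eq:der2} (equivalently, using Eq.~\eqref{dm}) one checks the two structural facts I will rely on: every exponent satisfies $0\le a_j\le i$, and $a_1+\cdots+a_r=i+(\text{total power of }\lambda)$. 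Because all terms share the same degree $r$, the comparison of monomials is governed entirely by Eq.~\eqref{cmono}: replacing one letter of a monomial by a $\prec$-smaller letter strictly lowers the monomial in $[\Delta(X)]$. This single observation drives both cases.

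For $\lambda\neq0$ I proceed by induction on $r$, the case $r=1$ being trivial. Splitting $u_1\cdots u_r=u_1\cdot(u_2\cdots u_r)$ and applying Eq.~\eqref{eq:der2}, the inductive hypothesis gives each summand $\binom{i}{j,k,\ell}\lambda^j d_X^{i-k}(u_1)\,d_X^{j+k}(u_2\cdots u_r)$ the leading monomial $d_X^{i-k}(u_1)\,d_X^{j+k}(u_2)\cdots d_X^{j+k}(u_r)$, whose letters carry orders $(i-k,j+k,\dots,j+k)$. Since $i-k\le i$ and $j+k\le i$ with equality forcing $k=0,\ j=i,\ \ell=0$, the observation above shows this monomial is $\prec$-maximal precisely for $(j,k,\ell)=(i,0,0)$, where it becomes the ``all-$i$'' monomial $d_X^i(u_1)\cdots d_X^i(u_r)$. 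This value is attained by a unique choice of indices, so no cancellation can occur, and multiplying the surviving multinomial coefficient $1$ by $\lambda^{i}$ and by the inductive coefficient $\lambda^{(r-2)i}$ yields the asserted coefficient $\lambda^{(r-1)i}$.

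For $\lambda=0$ the derivation is an ordinary derivation, so $d_X^i(u_1\cdots u_r)=\sum_{a_1+\cdots+a_r=i}\binom{i}{a_1,\dots,a_r}d_X^{a_1}(u_1)\cdots d_X^{a_r}(u_r)$ with all coefficients positive; hence no cancellation is possible and I only need to locate the $\prec$-largest monomial and total the coefficients producing it. The top letter of $d_X^{a_1}(u_1)\cdots d_X^{a_r}(u_r)$ has differential order $\max_j(n_j+a_j)\le n_1+i$, because $a_j\le i$ and $n_j\le n_1$ (as $u_1\succeq u_j$); moreover equality forces $a_{j_0}=i$ and $u_{j_0}=u_1$ for the realizing index $j_0$, which then exhausts the budget so that all other exponents vanish. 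Thus the maximal monomial is $T:=d_X^i(u_1)u_2\cdots u_r$, and the tuples producing $T$ are exactly those assigning the full derivative $d_X^i$ to one of the copies of $u_1$; since each such multinomial coefficient equals $1$ and the copies of $u_1$ number the multiplicity of $u_1$ in $u_1\cdots u_r$, the leading coefficient is that multiplicity. This collapsing of permuted equal factors is precisely where the commutative statement departs from the noncommutative Lemma~\ref{lead}, in which the leading coefficient is always $1$.

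Finally, the ``in particular'' assertion follows by specializing to $i=1$. For $u,v\in[\Delta(X)]\setminus\{1\}$ with $u=u_1\cdots u_r,\ v=v_1\cdots v_q$ arranged decreasingly, if $u\prec v$ comes from $r<q$ then $\overline{d_X(u)}$ and $\overline{d_X(v)}$ retain these letter counts, so $\overline{d_X(u)}\prec\overline{d_X(v)}$; if instead $r=q$ with $u_1=v_1,\dots,u_{k-1}=v_{k-1}$ and $u_k\prec v_k$, then the formulas for $\overline{d_X(\cdot)}$ (all letters raised when $\lambda\neq0$, only the leading one raised when $\lambda=0$) keep the raised letters at the top and in sorted order, so the two sorted letter-sequences agree in positions $1,\dots,k-1$ and differ at position $k$, where $u_k\prec v_k$ (or its $d_X$-image, according to $\lambda$) forces $\overline{d_X(u)}\prec\overline{d_X(v)}$. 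The only delicate point throughout is the bookkeeping of the leading coefficient in the weight-zero case, where one must recognize that several distinct index tuples collapse onto the single monomial $T$ and sum to the multiplicity.
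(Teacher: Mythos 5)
Your overall route --- expand $d_X^i(u_1\cdots u_r)$ by the Leibniz formulas \eqref{eq:der2}/\eqref{dm}, note that every monomial occurring has the form $d_X^{a_1}(u_1)\cdots d_X^{a_r}(u_r)$ with $0\le a_j\le i$ and hence degree $r$, and then locate the maximum using the property \eqref{cmono} that replacing a letter by a $\prec$-smaller letter strictly lowers a monomial --- is exactly what the paper intends: its own proof consists of the single sentence that the lemma ``is easy to verify by Eq.~\eqref{dm} and the definition of monomial order $\prec$ on $[\Delta(X)]$.'' Your handling of the case $\lambda\neq0$ (unique index triple $(i,0,0)$ attaining the top monomial, no cancellation, coefficient $\lambda^i\cdot\lambda^{(r-2)i}=\lambda^{(r-1)i}$), of the multiplicity count when $\lambda=0$, and of the ``in particular'' assertion are all sound.

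There is, however, one inference in your $\lambda=0$ argument that is false as written: from $\max_j(n_j+a_j)=n_1+i$ you conclude that the realizing index $j_0$ satisfies $a_{j_0}=i$ \emph{and} $u_{j_0}=u_1$. The equality only forces $n_{j_0}=n_1$ and $a_{j_0}=i$; the letter $u_{j_0}$ may have the same differential order as $u_1$ but a strictly smaller variable. For instance, with $y<x$, $u_1=x^{(0)}$, $u_2=y^{(0)}$, $i=1$, the tuple $(a_1,a_2)=(0,1)$ gives $M=x^{(0)}y^{(1)}$, whose top letter $y^{(1)}$ has order $n_1+i=1$, yet $u_2\neq u_1$. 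The conclusion survives because letters of equal differential order are compared by their variables: in this situation the top letter of $M$ is $x_{i_{j_0}}^{(n_1+i)}\prec x_{i_1}^{(n_1+i)}$, so $M\prec T$ anyway, and $M=T$ does force $u_{j_0}=u_1$. The patch is one line --- split the equality case according to whether the top letters themselves coincide, not merely their orders --- but as stated the step conflates ``maximal differential order'' with ``maximal letter.'' (A further nitpick aimed at the statement rather than your proof: for $i=0$ the coefficient is $1$, not the multiplicity of $u_1$; your tuple count actually yields $1$ in that case, so the discrepancy lies in the lemma's wording, which implicitly assumes $i\geq1$.)
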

\begin{proof}
	This is easy to verify by Eq.~\eqref{dm} and the definition of monomial order $\prec$ on $[\Delta(X)]$.
\end{proof}

As in the case of the classical Gr\"{o}bner bases in $\bk[X]$, there is only one kind of compositions for commutative differential algebras.

\begin{defn}
	We continue with the notation in Definition~\mref{de:clead}.	
	For monic polynomials $f,g$ in $\bk[\Delta(X)]$, if there exist $u,v,w_{i,j}\in [\Delta(X)]$ with $i,j\geq0$ such that $w_{i,j}=\oline{d_X^i(f)}u=\oline{d_X^j(g)}v$ and $|w_{i,j}|<|\oline{f}|+|\oline{g}|$, then
	\[[f,g]_{w_{i,j}}^{u,v}:=d_X^i(f)^\natural u-d_X^j(g)^\natural v\]
	is called a {\bf composition} of $f$ and $g$ with respect to $w_{i,j}$. Since $u$ and $v$ are uniquely determined by $w_{i,j}$, we will sometimes abbreviate $[f,g]_{w_{i,j}}^{u,v}$ as $[f,g]_{w_{i,j}}$.
\end{defn}

\begin{defn}
	Let $S$ be a set of monic polynomials in $\bk[\Delta(X)]$ and $w\in [\Delta(X)]$. 	
	\begin{enumerate}
		\item An element $u\in\bk[\Delta(X)]$ is called {\bf trivial congruent modulo} $(S,w)$, denoted by
		\[u\equiv 0\mod(S,w),\]
		if $u=\sum_r c_rd_X^{k_r}(s_r)u_r$ with $c_r\in\bk$, $s_r\in S$, $u_r\in [\Delta(X)]$ and $k_r\geq 0$ such that $\oline{d_X^{k_r}(s_r)}u_r\prec w$ for any $r$.
For $u,v\in\bk[\Delta(X)]$, we denote
$u\equiv v\mod(S,w)$ if $u-v\equiv 0 \mod (S,w)$.
		\mlabel{it:cgbda1}		
		\item For $f,g\in\bk[\Delta(X)]$ and $u,v,w_{i,j}\in [\Delta(X)]$, a composition $[f,g]_{w_{i,j}}^{u,v}$ is called {\bf trivial modulo} $(S,w_{i,j})$, if
		\[[f,g]_{w_{i,j}}^{u,v}\equiv 0\mod(S,w_{i,j}).\]
		
		\item The set $S$ is called a {\bf (differential) Gr\"{o}bner-Shirshov basis}, if for any $f,g\in S$, all compositions $[f,g]^{u,v}_{w_{i,j}}$ are trivial modulo $(S,{w_{i,j}})$.	
	\end{enumerate}
	\mlabel{cgbda}
\end{defn}

\begin{lemma}
	Let $<$ be a monomial order on $[\Delta(X)]$ and $S$ be a set of monic polynomials in $\bk[\Delta(X)]$. Then the following conditions on $S$ are equivalent:
	\begin{enumerate}
		\item $S$ is a Gr\"{o}bner-Shirshov basis in $\bk[\Delta(X)]$.
		\mlabel{ckey0a}
		\item For any $s_1,s_2\in S$ and $w_{i,j}\in [\Delta(X)]$ such that $w_{i,j}=\oline{d_X^i(s_1)}u=\oline{d_X^j(s_2)}v$ with $u,v\in [\Delta(X)]$ and $i,j\geq0$, we have
$[s_1,s_2]^{u,v}_{w_{i,j}}\equiv 0\mod(S,w_{i,j})$.
		\mlabel{ckey0b}
	\end{enumerate}
	\mlabel{ckey0}
\end{lemma}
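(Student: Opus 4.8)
The plan is to prove the two implications separately. The implication \eqref{ckey0b}$\Rightarrow$\eqref{ckey0a} is immediate: every composition that enters the definition of a Gr\"obner--Shirshov basis (Definition~\ref{cgbda}) arises from some common multiple $w_{i,j}=\oline{d_X^i(s_1)}u=\oline{d_X^j(s_2)}v$, so its triviality is already asserted by \eqref{ckey0b}, which imposes \emph{no} degree restriction on $w_{i,j}$. All the content therefore lies in the converse \eqref{ckey0a}$\Rightarrow$\eqref{ckey0b}, where the point is that the extra compositions allowed by \eqref{ckey0b} are automatically trivial.

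For \eqref{ckey0a}$\Rightarrow$\eqref{ckey0b} I would fix $s_1,s_2\in S$, integers $i,j\geq0$, and an arbitrary common multiple $w:=w_{i,j}=L_1u=L_2v$ of $L_1:=\oline{d_X^i(s_1)}$ and $L_2:=\oline{d_X^j(s_2)}$, and aim to show $[s_1,s_2]^{u,v}_w\equiv0\mod(S,w)$. The first step is a reduction to the least common multiple. Since $w$ is a common multiple of $L_1$ and $L_2$, the lcm $m:=\mathrm{lcm}(L_1,L_2)$ divides $w$; writing $w=mt$ with $m=L_1u_0=L_2v_0$, cancellation in the free commutative monoid $[\Delta(X)]$ gives $u=u_0t$ and $v=v_0t$, whence
\[[s_1,s_2]^{u,v}_w=t\,[s_1,s_2]^{u_0,v_0}_m.\]
Because $\prec$ is compatible with multiplication (Eq.~\eqref{cmono}), any expression $[s_1,s_2]^{u_0,v_0}_m=\sum_r c_r d_X^{k_r}(s_r)u_r$ with $\oline{d_X^{k_r}(s_r)}u_r\prec m$ becomes, after multiplying through by $t$, a witness for $[s_1,s_2]^{u,v}_w\equiv0\mod(S,w)$. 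Hence it suffices to treat the case $w=m$.

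At the lcm I would split according to $\gcd(L_1,L_2)$. If $\gcd(L_1,L_2)\neq1$, then $|m|<|L_1|+|L_2|$; since $d_X^i$ preserves the number of letters of a leading monomial (Lemma~\ref{clead}), we have $|L_1|+|L_2|=|\oline{s_1}|+|\oline{s_2}|$, so $[s_1,s_2]_m$ is a genuine composition in the sense of Definition~\ref{cgbda} and its triviality is precisely the hypothesis \eqref{ckey0a}. The remaining case $\gcd(L_1,L_2)=1$ is the disjoint one, where $m=L_1L_2$ and $|m|=|L_1|+|L_2|$, so the composition is \emph{not} covered by \eqref{ckey0a} and must be handled directly. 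Writing $f:=d_X^i(s_1)^\natural=L_1+r_f$ and $g:=d_X^j(s_2)^\natural=L_2+r_g$ with $\oline{r_f}\prec L_1$ and $\oline{r_g}\prec L_2$, a direct computation gives
\[[s_1,s_2]_m=fL_2-gL_1=gr_f-fr_g.\]
Every monomial $n$ of $r_f$ satisfies $n\preceq\oline{r_f}\prec L_1$, so each term of $gr_f$ is a scalar multiple of $d_X^j(s_2)$ times a monomial $n$ with $L_2n\prec L_1L_2=m$; the argument for $fr_g$ with $d_X^i(s_1)$ is symmetric. Hence $[s_1,s_2]_m\equiv0\mod(S,m)$ unconditionally, completing this case.

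The main obstacle is this last, disjoint case, which is the differential analogue of Buchberger's first criterion: one must verify that the cross-cancellation $fL_2-gL_1=gr_f-fr_g$ genuinely exhibits the composition as a combination of $d_X^i(s_1)$- and $d_X^j(s_2)$-multiples all of whose leading monomials lie strictly below $m$. This uses the letter-count preservation of $d_X^i$ on leading terms (Lemma~\ref{clead}) to identify the coprime lcm and the compatibility of $\prec$ with multiplication (Eq.~\eqref{cmono}); everything else is routine bookkeeping with the monomial order.
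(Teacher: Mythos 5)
Your proof is correct and follows essentially the same route as the paper's: the coprime/disjoint case is handled by the same cross-cancellation identity $fL_2-gL_1=gr_f-fr_g$ (the paper's case (a), there with an extra cofactor carried along), and the genuinely overlapping case is reduced to an honest composition whose triviality witness is multiplied through by the cofactor using the compatibility property in Eq.~\eqref{cmono} (the paper's case (b)). Your preliminary reduction to the least common multiple only reorganizes where that cofactor multiplication happens; the substance of the two arguments is identical.
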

\begin{proof}
	First (\mref{ckey0b}) implies (\mref{ckey0a}) by Definition~\mref{cgbda}. Now to show that (\mref{ckey0a}) implies (\mref{ckey0b}), there are two cases to check depending on whether $\oline{d_X^i(s_1)}$ and $\oline{d_X^j(s_2)}$ are separated or not as subwords in $w_{i,j}$.
	\smallskip
	
	\noindent
	(a) Suppose that $\oline{d_X^i(s_1)}$ and $\oline{d_X^j(s_2)}$ are separated as subwords in $w_{i,j}$. Then there exists $w'\in [\Delta(X)]$ such that
	 \[w_{i,j}=\oline{d_X^i(s_1)}u=\oline{d_X^j(s_2)}v=\oline{d_X^i(s_1)d_X^j(s_2)}w',\]
	and thus
	\[\begin{split}
	d_X^i(s_1)^\natural u- d_X^j(s_2)^\natural v
	 &=d_X^i(s_1)^\natural\oline{d_X^j(s_2)}w'-d_X^j(s_2)^\natural \oline{d_X^i(s_1)}w'\\
	 &=-d_X^i(s_1)^\natural\left(d_X^j(s_2)^\natural-\oline{d_X^j(s_2)}\right)w'+d_X^j(s_2)^\natural\left(d_X^i(s_1)^\natural-\oline{d_X^i(s_1)}\right)w'.
	\end{split}\]
	Let
	\[d_X^i(s_1)^\natural-\oline{d_X^i(s_1)}=\sum_k c_k u_k
	\quad\text{and}\quad
	d_X^j(s_2)^\natural-\oline{d_X^j(s_2)}=\sum_l d_l v_l,\]
	with all $c_k,d_l\in\bk,\,u_k,v_l\in [\Delta(X)]$. Since
	 \[\oline{d_X^i(s_1)^\natural-\oline{d_X^i(s_1)}}\prec\oline{d_X^i(s_1)}, \quad \oline{d_X^j(s_2)^\natural-\oline{d_X^j(s_2)}}\prec\oline{d_X^j(s_2)},\]
	implying that
	\[\oline{d_X^i(s_1)}  v_lw'\prec\oline{d_X^i(s_1)d_X^j(s_2)}w'=w_{i,j}, \quad \oline{d_X^j(s_2)}u_kw'\prec\oline{d_X^j(s_2)d_X^i(s_1)}w'=w_{i,j},\]
	we have $d_X^i(s_1)^\natural u- d_X^j(s_2)^\natural v\equiv 0\mod(S,w_{i,j})$.
	
	\smallskip
	
	\noindent
	(b) Otherwise, $\oline{d_X^i(s_1)}$ and $\oline{d_X^j(s_2)}$ are overlapping  subwords in $w_{i,j}$. Then as $[\Delta(X)]$ is commutative, there exists a decomposition $w_{i,j}=w'w''$ for $w',w''\in [\Delta(X)]$ such that $w'=\oline{d_X^i(s_1)}u'=\oline{d_X^j(s_2)}v'$ for
	some $u',v'\in [\Delta(X)]$ satisfying $|w'|<|s_1|+|s_2|$. In particular, $u=u'w'',\,v=v'w''$.
	Since $S$ is a Gr\"{o}bner-Shirshov basis, by definition we have the composition
	\[[s_1,s_2]^{u',v'}_{w'}=d_X^i(s_1)^\natural u'-d_X^j(s_2)^\natural v'=\sum_r c_rd_X^{k_r}(t_r)u_r,\]
	with $c_r\in\bk$, $t_r\in S$, $u_r\in [\Delta(X)]$ and $k_r\geq 0$ such that $\oline{d_X^{k_r}(t_r)}u_r\prec w'$ for any $r$. By the property (\mref{cmono}) of $\prec$ on $[\Delta(X)]$, this means that
	\[d_X^i(s_1)^\natural u-d_X^j(s_2)^\natural v=\sum_r c_rd_X^{k_r}(t_r)u_rw'',\]
	with $\oline{d_X^{k_r}(t_r)}u_rw''\prec w'w''=w_{i,j}$ for any $r$. Hence, $d_X^i(s_1)^\natural u- d_X^j(s_2)^\natural v\equiv 0\mod(S,w)$.
\end{proof}

Now we are ready to give the following CD lemma for $\bk[\Delta(X)]$.
\begin{lemma}	\mlabel{ccdld}

	{\em(Composition-diamond lemma for commutative differential algebras)} \ Let $S$ be
	a monic subset of  $\bk[\Delta(X)]$, $\DI(S)$ be the differential ideal of
	$\bk[\Delta(X)]$ generated by $S$ and
	$\prec$ be a monomial order on $[\Delta(X)]$. Then the following
	statements are equivalent:
	\begin{enumerate}
		\item the set $S $ is a Gr\"{o}bner-Shirshov basis in  $\bk[\Delta(X)]$; \mlabel{it:ccdld1}
		\item if $f\in\DI(S)\backslash\{0\}$, then
		$\bar{f}=\overline{d_X^i(s)}u$  for some $s\in S$, $u\in[\Delta(X)]$ and $i\geq 0$;
		\mlabel{it:ccdld2}
		\item 		\mlabel{it:ccdld3}
the set of {\bf differential $S$-irreducible words}
		$$\DIrr(S):= [\Delta(X)]\backslash \left\{\left.\overline{d_X^i(s)}u\,\right|\, s\in S,\ u\in [\Delta(X)],\ i\geq0\right\}$$
		is a $\bk$-basis of $\bk[\Delta(X)\,|\,S]:=\bk[\Delta(X)]/\DI(S)$. In particular, $\bk[\Delta(X)]=\bk\DIrr(S)\oplus\DI(S)$.
	\end{enumerate}
\end{lemma}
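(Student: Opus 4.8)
The plan is to prove the cyclic chain of implications $(\ref{it:ccdld1})\Rightarrow(\ref{it:ccdld2})\Rightarrow(\ref{it:ccdld3})\Rightarrow(\ref{it:ccdld1})$, following the template of the associative case (Lemma~\ref{cdla}) but exploiting the simplification already recorded in Lemma~\ref{ckey0}: for commutative differential algebras it suffices to test triviality of the compositions $[s_1,s_2]^{u,v}_{w_{i,j}}$ arising from \emph{arbitrary} common multiples of leading words, so that the distinction between separated and overlapping occurrences has already been absorbed there. Throughout I would use Eq.~\eqref{dix} to regard $\DI(S)$ as the ordinary ideal generated by $\{d_X^n(s)\mid s\in S,\ n\geq 0\}$, so that every $f\in\DI(S)$ admits a representation $f=\sum_r c_r\,d_X^{k_r}(s_r)^\natural u_r$ with $c_r\in\bk$, $s_r\in S$, $u_r\in[\Delta(X)]$ and $k_r\geq 0$, where by Lemma~\ref{clead} the leading word of each summand is $\overline{d_X^{k_r}(s_r)}\,u_r$.

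For the main step $(\ref{it:ccdld1})\Rightarrow(\ref{it:ccdld2})$, I would fix $f\in\DI(S)\setminus\{0\}$ and, among all representations as above, choose one minimizing $w:=\max_r \overline{d_X^{k_r}(s_r)}\,u_r$ in the well-order $\prec$, and then minimizing the number of indices $r$ attaining this maximum. The claim is that $\oline{f}=w$, which at once gives $\oline{f}=\overline{d_X^{k_r}(s_r)}\,u_r$ for such an $r$, as required. If instead $\oline{f}\prec w$, then the coefficient of $w$ in $f$ vanishes, so at least two summands, say $r=1,2$, attain $w$; writing $A_i=d_X^{k_i}(s_i)^\natural u_i$, both are monic with leading word $w$. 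Applying Lemma~\ref{ckey0}(\ref{ckey0b}) to the common multiple $w=\overline{d_X^{k_1}(s_1)}u_1=\overline{d_X^{k_2}(s_2)}u_2$ yields $A_1-A_2\equiv 0\bmod(S,w)$, i.e. $A_1-A_2$ rewrites as a sum of terms with leading word $\prec w$. Substituting into $c_1A_1+c_2A_2=c_1(A_1-A_2)+(c_1+c_2)A_2$ produces a representation of $f$ in which strictly fewer summands attain $w$ (or in which $w$ itself drops), contradicting minimality. This bookkeeping — checking that the reduction strictly decreases the well-founded statistic (the pair consisting of $w$ and its multiplicity) — is the one genuinely delicate point.

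For $(\ref{it:ccdld2})\Rightarrow(\ref{it:ccdld3})$ I would argue by the standard division algorithm: Noetherian induction on $\oline{f}$ shows every $f$ is congruent modulo $\DI(S)$ to a $\bk$-linear combination of words in $\DIrr(S)$, so $\bk[\Delta(X)]=\bk\DIrr(S)+\DI(S)$; at each step, if the current leading word is reducible one subtracts a suitable multiple $d_X^i(s)^\natural u$ to lower it, and termination is by well-ordering. Linear independence modulo $\DI(S)$ is then immediate from $(\ref{it:ccdld2})$: a nonzero $\bk$-combination of distinct words of $\DIrr(S)$ lying in $\DI(S)$ would, by $(\ref{it:ccdld2})$, have leading word divisible by some $\overline{d_X^i(s)}$, contradicting that this leading word is $S$-irreducible. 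Hence the sum is direct and $\DIrr(S)$ descends to a $\bk$-basis of the quotient.

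Finally, for $(\ref{it:ccdld3})\Rightarrow(\ref{it:ccdld1})$ I would take any composition $h=[f,g]^{u,v}_{w_{i,j}}$ with $f,g\in S$; by construction $h\in\DI(S)$ and, since the leading terms cancel, $\oline{h}\prec w_{i,j}$ (or $h=0$). Running the division algorithm on $h$ — which needs only the well-order, not $(\ref{it:ccdld1})$ — produces $h=\sum_r c_r d_X^{k_r}(s_r)^\natural u_r+\mathrm{rem}$ with $\mathrm{rem}\in\bk\DIrr(S)$ and each $\overline{d_X^{k_r}(s_r)}u_r\preceq\oline{h}\prec w_{i,j}$, since the leading word never increases during reduction. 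As $h\in\DI(S)$ forces $\mathrm{rem}\in\DI(S)\cap\bk\DIrr(S)=\{0\}$ by the directness in $(\ref{it:ccdld3})$, we obtain $h\equiv 0\bmod(S,w_{i,j})$, so every composition is trivial and $S$ is a Gr\"obner--Shirshov basis. Apart from the termination argument flagged above, the remaining work is the routine division-algorithm machinery transported to $[\Delta(X)]$.
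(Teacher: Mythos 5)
Your proposal is correct and follows essentially the same route as the paper's proof: the cyclic chain of implications with Lemma~\ref{ckey0} as the key input for $(\ref{it:ccdld1})\Rightarrow(\ref{it:ccdld2})$, the division algorithm plus the well-order for $(\ref{it:ccdld2})\Rightarrow(\ref{it:ccdld3})$, and reduction of a composition together with the directness $\bk[\Delta(X)]=\bk\DIrr(S)\oplus\DI(S)$ for $(\ref{it:ccdld3})\Rightarrow(\ref{it:ccdld1})$. Your minimal-representation argument (minimizing the maximal leading word $w$ and then its multiplicity) is just the contrapositive packaging of the paper's double induction on the multiplicity $m$ and on $w_1$, so the two proofs coincide in substance.
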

\begin{proof} (\mref{it:ccdld1})$\Longrightarrow$(\mref{it:ccdld2}): For $f\in\DI(S)\setminus\{0\}$, by Eq.~\eqref{dix} we can write
	\[f=\sum_{i=1}^n a_id_X^{k_i}(s_i)^\natural u_i,\]
	where $a_i\in\bk$, $s_i\in S$ and $u_i\in [\Delta(X)]$. Let $w_i=\oline{d_X^{k_i}(s_i)} u_i$. By arranging their leading terms we can assume that $w_1=\cdots=w_m\succ w_{m+1}\succeq\cdots\succeq w_n$.
	
	Now one can prove the result by induction on $m$. If $m=1$, then $\oline{f}=w_1=\oline{d_X^{k_1}(s_1)}u_1$. If $m>1$, then by Lemma \mref{ckey0} we have
	\[d_X^{k_2}(s_2)^\natural u_2-d_X^{k_1}(s_1)^\natural u_1=\sum_r b_rd_X^{h_r}(t_r)v_r,\]
	where $b_r\in\bk,t_r\in S,v_r\in[\Delta(X)]$ such that  $\oline{d_X^{h_r}(t_r)}v_r\prec w_1$. Hence,
	\[f=\sum_{i=1}^n a_id_X^{k_i}(s_i)^\natural u_i=(a_1+a_2)d_X^{k_1}(s_1)^\natural u_1+a_2\sum_r b_rd_X^{h_r}(t_r)v_r+\sum_{i=3}^n a_id_X^{k_i}(s_i)^\natural u_i.\]
	If $m>2$ or $a_1+a_2\neq0$, then the result follows by induction on $m$. If $m=2$ and $a_1+a_2=0$, then it follows by induction on $ w_1$ as all the monomials left are smaller than $w_1$ under $\prec$.
	\smallskip
	
	\noindent
	 (\mref{it:ccdld2})$\Longrightarrow$(\mref{it:ccdld3}):
For any $f\in\bk[\Delta(X)]$, we first show that $f$ can be expressed as a linear combination of elements in $\DIrr(S)$ modulo $\DI(S)$ by induction on $\oline{f}$ under $\prec$ with the initial step that $\oline{f}=1$. Indeed, since $[\Delta(X)]=\DIrr(S)\sqcup \left\{\left.\overline{d_X^i(s)}u\,\right|\, s\in S,\ u\in [\Delta(X)],\ i\geq0\right\}$,
if $\oline{f}=\overline{d_X^i(s)}u$ for some $s\in S,\ u\in [\Delta(X)],\ i\geq0$, then
$\oline{f-\mbox{lc}(f)d_X^i(s)^\natural u}\prec\oline{f}$ and
we can apply the induction hypothesis to $f-\mbox{lc}(f)d_X^i(s)^\natural u$ to obtain the following form of expansion
\[f=\sum_{u_p\in\DIrr(S),\,u_p\preceq\oline{f}}a_pu_p+\sum_{s_r\in S,\,\oline{d_X^{h_r}(s_r)}v_r\preceq\oline{f}}b_rd_X^{h_r}(s_r)v_r,\]
where $a_p,b_r\in\bk$. Otherwise, such kind of expansion can be obtained by applying the induction hypothesis to $f-\mbox{lc}(f)\oline{f}$ as $\oline{f-\mbox{lc}(f)\oline{f}}\prec\oline{f}$ if $\oline{f}\in\DIrr(S)$.
Now suppose that $\sum_{p=1}^na_pu_p=0$ in $\bk[\Delta(X)\,|\,S]$ with each $a_p\in\bk\setminus\{0\}$ and $u_p\in\DIrr(S)$. Then $g=\sum_{p=1}^na_pu_p$ is in $\DI(S)$. By Item~(\mref{it:ccdld2}), we have
	$\oline{g}=u_p\notin\DIrr(S)$ for some $p$, which is a contradiction. Hence, elements in $\DIrr(S)$ are $\bk$-linear independent modulo $\DI(S)$. Thus $\DIrr(S)$ is a $\bk$-basis of $\bk[\Delta(X)\,|\,S]$.
	\smallskip
	
	\noindent
	 (\mref{it:ccdld3})$\Longrightarrow$(\mref{it:ccdld1}): For any composition $[f,g]_{w_{i,j}}^{u,v}$ in $S$, by
	$[f,g]_{w_{i,j}}^{u,v}$ being in $\DI(S)$ and Item~(\mref{it:ccdld3}), we have
	 \[[f,g]_{w_{i,j}}^{u,v}=\sum_rb_rd_X^{h_r}(t_r)v_r,\]
	where $b_r\in\bk,t_r\in S,v_r\in [\Delta(X)]$ such that  $\oline{d_X^{h_r}(t_r)}v_r\preceq\oline{[f,g]_{w_{i,j}}^{u,v}}\prec w_{i,j}$. This implies Item~(\mref{it:ccdld1}) as $[f,g]_{w_{i,j}}^{u,v}\equiv 0\mod(S,w_{i,j})$.
\end{proof}

\section{Construction of free differential algebras on algebras}
\mlabel{sec:bases}
In this section, we apply the CD Lemma \mref{cdld} and Lemma \mref{ccdld} to obtain \gsbs for free differential algebras on algebras and free commutative differential algebras on commutative algebras, and to provide canonical bases for these free differential algebras.

\subsection{Gr\"obner-Shirshov bases of free differential algebras on algebras}
\mlabel{ss:gsfda}
Recall the following natural algebra embedding
\[
\hat{i}_X:\bk\langle X\rangle\rar\bk\langle\Delta(X)\rangle,\,x\mapsto x^{(0)},x\in X,\]
in Proposition~\ref{pp:atda}. We further define the algebra embeddings
\begin{equation}
\hat{i}_X^{(n)}:\bk\langle X\rangle\rar\bk\langle\Delta(X)\rangle,\,x\mapsto x^{(n)},x\in X
\mlabel{eq:emb}
\end{equation}
for all $n\geq0$. In particular, $\hat{i}_X^{(0)}=\hat{i}_X$.

Now we are in the position to state our first result on the free differential algebra on an algebra, addressing Question~\mref{qu:basis}.(\mref{it:basis1}).

\begin{theorem}\label{mtgs}
Let $A$ be an algebra and let $A=\bk\langle X\rangle/I_A$ be a presentation of $A$ by generating set $X$ and ideal $I_A$. Let $S$ be a Gr\"obner-Shirshov basis of $I_A$. Let $\hat{S}:=\hat{i}_X(S)$. Let $\freed{A}$ be the free differential algebra on $A$ with the presentation $\freed{A}=\bfk\langle \Delta(X)\rangle/\DI(\hat{S})$ in Proposition~\mref{pp:atda}, where $\DI(\hat{S})$ is the differential ideal of $\bfk\langle \Delta(X)\rangle$ generated by $\hat{S}$. Then $\hat{S}$ is a Gr\"{o}bner-Shirshov basis of $\DI(\hat{S})$. Furthermore, if $S$ is minimal $($resp. reduced$)$, then $\hat{S}$ is also minimal $($resp. reduced$)$.
\mlabel{thm:gsfda}
\end{theorem}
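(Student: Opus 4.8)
The plan is to verify the definition of a differential Gr\"obner-Shirshov basis for $\hat S$ directly (Definition~\mref{gbda}(iii)): that every intersection composition $(f,g)^{u,v}_{w_{i,j}}$ and every inclusion composition $(f,g)^{q}_{w_{i,j}}$ with $f,g\in\hat S$ is trivial modulo $(\hat S,w_{i,j})$. The organizing idea is that, since each $f=\hat i_X(s)$ is monic (as $s\in S$ is) and supported entirely on the degree-zero letters $x^{(0)}$, every composition of $\hat S$ ought to be a ``derivative copy'' of an ordinary composition of $S$ in $\bk\langle X\rangle$. The hypothesis that $S$ is a Gr\"obner-Shirshov basis of $I_A$ (Lemma~\mref{cdla}) then supplies the triviality I want, which I would transport upward to $\bk\langle\Delta(X)\rangle$.

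First I would control the leading terms of the derivatives through Lemma~\mref{lead}. Because $\hat i_X$ preserves the deg-lex order on degree-zero words, $\oline{\hat i_X(s)}=\hat i_X(\oline s)$, and since $d_X$ is order-preserving (the final clauses of Lemmas~\mref{lead} and~\mref{mono}) one gets $\oline{d_X^i(\hat i_X(s))}=\oline{d_X^i(\hat i_X(\oline s))}$. For weight $\lambda\neq0$, Lemma~\mref{lead}(i) shows this leading word is obtained from $\oline s$ by raising \emph{every} letter $x^{(0)}$ to $x^{(i)}$, with coefficient a power of $\lambda$. Hence in any composition --- an intersection composition, where the constraint $|w_{i,j}|<|\oline f|+|\oline g|$ forces $\oline{d_X^i(f)}$ and $\oline{d_X^j(g)}$ to overlap in at least one letter, or an inclusion composition, where $\oline{d_X^j(g)}$ is a subword of $\oline{d_X^i(f)}$ --- the two leading words share a letter, which would then carry both superscripts $i$ and $j$, so $i=j$; the composition word $w_{i,i}$ is then exactly the superscript-$i$ copy of an ordinary composition word $w_0=\oline{s_1}u_0=v_0\oline{s_2}$ for $s_1,s_2$. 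For weight $\lambda=0$, Lemma~\mref{lead}(ii) shows that only the \emph{first} letter is raised, so here $i\neq j$ can occur; a short case analysis on how the two leading words may overlap or nest then shows that any admissible configuration forces the ``other'' index to be $0$ and again identifies the composition with an ordinary composition of $S$. This weight-zero bookkeeping is exactly where noncommutativity is indispensable: the parallel commutative count is what fails and produces the obstructions for $\mathrm{CDA}_0$ announced in the introduction.

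With this correspondence established, the heart of the argument is to lift triviality. Given the ordinary identity $s_1u_0-v_0s_2=\sum_t c_t a_t s_t b_t$ with $\oline{a_t s_t b_t}\prec w_0$ coming from $S$ being a Gr\"obner-Shirshov basis, I would apply $d_X^i$ to its image under $\hat i_X$ and expand using the multi-factor Leibniz rule~\eqref{dm}, rescaling by the power of $\lambda$ recorded in Lemma~\mref{lead} so that the top term matches the leading term $w_{i,j}$ of the differential composition. The differential composition and this expansion share the same leading word, so their difference has strictly smaller leading word and can be reduced by induction on $\prec$, exactly as in the proof of Lemma~\mref{ccdld}, rewriting everything as a sum $\sum_r c_r q_r|_{d_X^{k_r}(s_r)}$ of $u$-words of derivatives of elements of $\hat S$ with leading words $\prec w_{i,j}$. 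I expect this reduction to be the main obstacle: the Leibniz expansion~\eqref{dm} spawns a large family of cross terms $d_X^{n-k}(\,\cdot\,)d_X^{j+k}(\,\cdot\,)$, and one must check that each sits strictly below $w_{i,j}$ in the deg-lex order and can be absorbed into an admissible $u$-word, all while carrying the weight powers correctly --- and the computation must be run separately for the intersection and inclusion types and for the two weight regimes.

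Finally, the minimality and reducedness statements should follow formally from the same correspondence. Any monomial appearing in $\hat i_X(s)$ lies in the degree-zero letters, so it can coincide with some $q|_{\oline{d_X^j(g)}}$ only when $j=0$, i.e. only through the ordinary pattern $\oline g=\oline{s'}$ with $s'\in S$; consequently an inclusion composition of $\hat S$, respectively a violation of the reducedness condition for $\hat S$, can arise only from the corresponding phenomenon for $S$. Thus if $S$ has no inclusion compositions then neither does $\hat S$, and if $S$ is reduced then so is $\hat S$.
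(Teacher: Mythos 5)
Your overall strategy coincides with the paper's: enumerate the compositions of $\hat S$ through the leading-term analysis of Lemma~\ref{lead}, identify each one with (a derivative of) an ordinary composition of $S$, lift the triviality supplied by $S$ being a Gr\"obner-Shirshov basis by applying $d_X^n$ and absorbing the Leibniz cross terms from Eq.~\eqref{dm} into $u$-words with leading words $\prec w_{i,j}$, and deduce minimality and reducedness from the fact that a superscript-zero monomial can contain $\oline{d_X^j(g)}$ only when $j=0$. Your $\lambda\neq0$ analysis (shared letter $\Rightarrow$ $i=j$, and $w_{i,i}$ is the superscript-$i$ copy of an ordinary ambiguity) and your final paragraph match the paper's proof.

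There is, however, a genuine flaw in your $\lambda=0$ case analysis: the claim that ``any admissible configuration forces the other index to be $0$'' is false. It is correct for intersection overlaps and for inclusions in which $\oline{d_X^j(g)}$ starts at a position $\geq 2$ of $\oline{d_X^i(f)}$, since those letters carry superscript $0$. But if $\oline{g}$ is a prefix of $\oline{f}$ beginning with the same letter, then for \emph{every} $n>0$ one has the inclusion ambiguity $w_{n,n}=\oline{d_X^n(f)}=\oline{d_X^n(g)}\,u$ with $u$ a superscript-zero word, in which \emph{both} derivative orders equal $n$; moreover the pair $(i,j)=(n,0)$ is not admissible here, because for $n>0$ the first letter of $\oline{g}$ occurs in $\oline{d_X^n(f)}$ only with superscript $n$. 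So your enumeration omits an infinite family of compositions (exactly case~(ii) of the paper's $\lambda=0$ argument), and a verification following it literally would be incomplete. The omitted family does yield to your own lifting machinery, as the paper shows:
\begin{equation*}
d_X^n(f)-d_X^n(g)u \;=\; d_X^n(f-gu)\;+\;\sum_{j=1}^{n}\binom{n}{j}\,d_X^{n-j}(g)\,d_X^{j}(u),
\end{equation*}
where $d_X^n(f-gu)$ is the $n$-th derivative of an ordinary composition of $S$ and each cross term satisfies $\oline{d_X^{n-j}(g)d_X^{j}(u)}\prec \oline{d_X^n(g)}\,u=w_{n,n}$ by the deg-lex comparison; but this case must be listed and checked separately rather than subsumed under ``the other index is $0$.''
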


\begin{proof}
We distinguish the two cases of weight $\lambda\neq 0$ and $\lambda=0$.

When $\lambda\neq 0$, the ambiguities of all possible compositions in $\hat{S}$ are listed as below by Lemma~\ref{lead}.(\ref{lead1}):
\begin{enumerate}
	\item $w_{n,n}=\oline{d_X^n(s)}u=v\oline{d_X^n(t)}$ for some $s,t\in\hat{S},\,u,v\in M(\Delta(X))$ and $n\geq0$  such that $|\oline{s}|+|\oline{t}|>|w_{n,n}|$.
	\mlabel{it:lcase1}
	\item $w_{n,n}=\oline{d_X^n(s)}=q|_{\oline{d_X^n(t)}}$ for some $s,t\in\hat{S},\,q\in S(\Delta(X))^\star$ and $n\geq0$.
\mlabel{it:lcase2}
\end{enumerate}

In case (\mref{it:lcase1}), by Lemma \ref{lead} (\ref{lead1}) we have $u=\hat{i}_X^{(n)}(u')$ and $v=\hat{i}_X^{(n)}(v')$ for some $u', v'\in M(X)$ that satisfy $w_{n,n}=\oline{d_X^n(s)}u=v\oline{d_X^n(t)}$. Let $u_0=\hat{i}_X(u')$ and $v_0=\hat{i}_X(v')$. Then we have
\[\begin{split}
(s,t)^{u,v}_{w_{n,n}}&=d_X^n(s)^\natural u-vd_X^n(t)^\natural\\
&=\left(d_X^n(s)^\natural u-\lambda^{-(|w_{n,n}|-1)n} d_X^n(s u_0)\right)
-\left(v d_X^n(t)^\natural
-\lambda^{-(|w_{n,n}|-1)n} d_X^n(v_0t)\right)
+\lambda^{-(|w_{n,n}|-1)n} d_X^n(su_0-v_0t).
\end{split}\]
For the first term, we have
\[d_X^n(s)^\natural u-\lambda^{-(|w_{n,n}|-1)n} d_X^n(s u_0)=\lambda^{-(|s|-1)n}\left(d_X^n(s)u-\lambda^{-|u|n} d_X^n(su_0)\right)=\sum_{j=0}^n\sum_{k_j} c_{j,\,k_j}q_{j,\,k_j}|_{d_X^j(s)},\,c_{j,\,k_j}\in\bk,\]
so that
$\oline{q_{j,\,k_j}|_{d_X^j(s)}}\prec w_{n,n}$, according to Eq.~\eqref{dm} and Lemma \ref{lead}. Similarly, for the second term, we have
$vd_X^n(t)
-\lambda^{-|v|n}d_X^n(v_0t)$ is also trivial modulo $(\hat{S},w_{n,n})$.
Meanwhile, $S$ is a Gr\"{o}bner-Shirshov basis in $\bk\langle X\rangle$, and it makes $(s,t)^{u_0,v_0}_{w_{0,0}}=su_0-v_0t\equiv 0\mod (\hat{S},w_{0,0})$ for $w_{0,0}=\oline{s}u_0=v_0\oline{t}$. Thus we also have $d_X^n(su_0-v_0t)
\equiv 0\mod (\hat{S},w_{n,n})$, as $w_{n,n}=\oline{d_X^n(w_{0,0})}$.
Putting together, we obtain
$$ (s,t)^{u,v}_{w_{n,n}}\equiv 0\mod (\hat{S},w_{n,n}).$$

For case (\mref{it:lcase2}), again by Lemma~\ref{lead}.(\mref{lead1}) we have $a=\hat{i}_X^{(n)}(a')$ and $b=\hat{i}_X^{(n)}(b')$ for some $a', b'\in M(X)$ to satisfy $q=a\star b$ and $w_n=\oline{d_X^n(s)}=q|_{\oline{d_X^n(t)}}=a\oline{d_X^n(t)}b$.
Let $a_0=\hat{i}_X(a')$, $b_0=\hat{i}_X(b')$ and $q_0=a_0\star b_0$. First note that
\[d_X^n(a_0tb_0)^\natural-ad_X^n(t)^\natural b=\lambda^{-(|w_{n,n}|-1)n}\left(d_X^n(a_0tb_0)-\lambda^{(|a|+|b|)n}ad_X^n(t)b\right)=\sum_{j=0}^n\sum_{k_j} c_{j,\,k_j}q_{j,\,k_j}|_{d_X^j(t)},\,c_{j,\,k_j}\in\bk,\]
such that
$\oline{q_{j,\,k_j}|_{d_X^j(t)}}\prec w_{n,n}$ by Eq.~\eqref{dm} and Lemma \ref{lead}. Also, the Gr\"{o}bner-Shirshov basis $S$ in $\bk\langle X\rangle$ guarantees $(s,t)^{q_0}_{w_{0,0}}=s-a_0tb_0\equiv 0\mod (\hat{S},w_{0,0})$ for $w_{0,0}=\oline{s}=a_0\oline{t}b_0$. Thus we also have $d_X^n(s-a_0tb_0)
\equiv 0\mod (\hat{S},w_{n,n})$, as $w_{n,n}=\oline{d_X^n(w_{0,0})}$. Hence, we obtain
\[\begin{split}
(s,t)^q_{w_{n,n}}&=d_X^n(s)^\natural-q|_{d_X^n(t)^\natural}=\lambda^{-(|w_{n,n}|-1)n}d_X^n(s)-ad_X^n(t)^\natural b\\
&=\lambda^{-(|w_{n,n}|-1)n}d_X^n(s-a_0tb_0)+\left(d_X^n(a_0tb_0)^\natural-ad_X^n(t)^\natural b\right)\equiv 0\mod (\hat{S},w_{n,n}).
\end{split}\]

\medskip
When $\lambda=0$, the ambiguities of all possible compositions come from the following cases by Lemma \ref{lead} (\mref{lead2}):

\begin{enumerate}
\item $w_{n,n}=\oline{d_X^n(s)}u=d_X^n(v)v'\oline{t}$ for some $s,t\in\hat{S},\,u\in \hat{i}_X(M(X)),\,v\in\hat{i}_X(X)$ and $v'\in M(\Delta(X))$ with $n\geq0$.
\mlabel{it:0case1}
\item $w_{n,n}=\oline{d_X^n(s)}=\oline{d_X^n(t)}u$ for some $s,t\in\hat{S},\,u\in \hat{i}_X(M(X))$ and $n\geq0$.
\mlabel{it:0case2}
\end{enumerate}

In case (\mref{it:0case1}), any ambiguity $w_{n,n}=\oline{d_X^n(s)}u=d_X^n(v)v'\oline{t},\,n>0$, is equivalent to $w_{0,0}=\oline{s}u=vv'\oline{t}$. In fact, we can prove that $(s,t)^{u,\,d_X^n(v)v'}_{w_{n,n}}=d_X^n(s)u-d_X^n(v)v't\equiv0\mod(\hat{S},w_{n,n}),\,n\geq0$ by induction on $n$. When $n=0$, it is clear as $S$ is a Gr\"{o}bner-Shirshov basis in $\bk\langle X\rangle$. Now for any $n\geq1$, by applying Eq.~\eqref{diff} when $\lambda=0$, the induction hypothesis, and the inequalities
$\oline{d_X^{n-1}(s)d_X(u)},\,\oline{d_X^{n-1}(v)d_X(v't)}\prec w_{n,n}$
by the definition of monomial order $\prec$ on $M(\Delta(X))$,
we obtain
\[\begin{split}
(s,t)^{u,\,d_X^n(v)v'}_{w_{n,n}}&=d_X^n(s)u-d_X^n(v)v't\\
&=d_X\left(d_X^{n-1}(s)u-d_X^{n-1}(v)v't\right) -d_X^{n-1}(s)d_X(u)+d_X^{n-1}(v)d_X(v't)\\
&\equiv0\mod(\hat{S},w_{n,n}).
\end{split}\]

For case (\mref{it:0case2}),  any ambiguity $w_{n,n}=\oline{d_X^n(s)}=\oline{d_X^n(t)}u,\,n>0$, is equivalent to $w_{0,0}=\oline{s}=\oline{t}u$. Since $\hat{S}=\hat{i}_X(S)$ and $S$ is a Gr\"{o}bner-Shirshov basis in $\bk\langle X\rangle$, we have $(s,t)^q_{w_{0,0}}=s-tu\equiv 0\mod (\hat{S},w_{0,0})$ for $q=\star u$.
Therefore,
\[\begin{split}
(s,t)^q_{w_{n,n}}&=d_X^n(s)-d_X^n(t)u=d_X^n(s-tu)+d_X^n(tu)-d_X^n(t)u\\
&\equiv d_X^n(tu)-d_X^n(t)u=\sum_{j=1}^n{n\choose j}d_X^{n-j}(t)d_X^j(u)\\
&\equiv 0\mod (\hat{S},w_{n,n}),
\end{split}\]
since $\oline{d_X^{n-j}(t)d_X^j(u)}\prec \oline{d_X^n(t)}u=w_{n,n}$ for any $0<j\leq n$ by comparing these monomial words of the same degree lexicographically.

To summarize, any inclusion composition in $\hat{S}$ has the form $(s,t)^q_{w_{n,n}},\,n\geq0$, trivial modulo $(\hat{S},w_{n,n})$, where especially $(s,t)^q_{w_{0,0}}$ must be the image of a trivial inclusion composition in $S$ under $\hat{i}_X$. Hence, if $S$ is minimal, that is, $S$ has no inclusion composition, so is $\hat{S}$.  On the other hand, $S$ being reduced implies that $\hat{S}$ is reduced as $\hat{S}=\hat{i}_X(S)$ and $\hat{i}_X$ is an embedding.
\end{proof}

\subsection{Linear basis of the free differential algebra on an algebra}
\mlabel{ss:basisfda}

By Theorem~\mref{thm:gsfda} and the CD Lemma~\ref{cdld}, the set $\DIrr(\hat{S})$ is a $\bk$-linear basis of $\freed{A}$. We now give an explicit description of this set.
More precisely, suppose that for the presentation $A=\bk\langle X\rangle/I_A$, $I_A$ has a \gsb $S_A$ which provides $A$ with a canonical linear basis. We will use the derived \gsb of the differential ideal $\DI(S_A)$ to provide the free differential algebra $\freed{A}=\free{X}/\DI(S_A)$ with a canonical linear basis. To give a precise statement, we start with some notations.

For the \gsb $S_A$ of $I_A$, let \[\oline{S_A}:=\{\oline{s}\in M(X)\,|\, s\in S_A\}\]
be the set of leading terms from $S_A$.
Also, for
$\fraku,\frakv\in M(\Delta(X))$, we denote $\frakv\mid\fraku$ if $\fraku=\fraka\frakv\frakb$ for some $\fraka, \frakb\in M(\Delta(X))$.
Otherwise, we write $\frakv\nmid\fraku$.

Recall the algebra embeddings defined in (\mref{eq:emb}),
\[\hat{i}_X^{(n)}:\bk\langle X\rangle\rar\bk\langle\Delta(X)\rangle,\,x\mapsto x^{(n)},x\in X,\]
for all $n\geq0$, and $\hat{i}_X=\hat{i}_X^{(0)}$. Given any polynomial $f\in\bk\langle X\rangle$, we further write
\[f^{(0)}:=\hat{i}_X(f)\in \bk\langle\Delta(X)\rangle.\]
For $\frakx=x_1\cdots x_k\in S(X)$ with $x_1,\cdots,x_k\in X$ and $n\geq 0$, denote
\begin{equation}
\frakx^{[n]}:=\begin{cases}
x_1^{(n)}\cdots x_k^{(n)},&\text{if }\lambda\neq0,\\
x_1^{(n)}x_2^{(0)}\cdots x_k^{(0)},&\text{if }\lambda=0.
\end{cases}
\mlabel{eq:diffpow}
\end{equation}

Now we have shown in Theorem~\mref{thm:gsfda} that for a \gsb $S_A$ of $I_A$, the embedding $\hat{S}_A=\hat{i}_X(S_A)=\left\{s^{(0)}\,\big|\, s\in S_A\right\}$ is a (differential) \gsb of the differential ideal $\DI(S_A)$.
\begin{prop}\mlabel{prop:fdab}
Let $A$ be an algebra with a presentation $\bk\langle X\rangle/I_A$ and let $S_A$ be a \gsb of the ideal $I_A$ in $\bk\langle X\rangle$.
The set of differential $\hat{S}_A$-irreducible elements
\begin{equation}
\DIrr(\hat{S}_A)=\left\{\frakx\in M(\Delta(X))\,\big|\, \oline{s}^{[n]}\nmid \frakx\text{ for any }\oline{s}\in \oline{S_A}, n\geq 0\right\},
\mlabel{eq:diffirr}
\end{equation}
giving a $\bfk$-linear basis of the free differential algebra $\freed{A}$ on $A$.
\end{prop}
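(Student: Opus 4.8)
The plan is to separate the statement into two parts: the identification of $\DIrr(\hat{S}_A)$ with the explicit set in Eq.~\eqref{eq:diffirr}, and the assertion that this set is a $\bk$-basis of $\freed{A}$. The second part is immediate: Theorem~\ref{thm:gsfda} guarantees that $\hat{S}_A$ is a differential Gr\"obner-Shirshov basis of $\DI(\hat{S}_A)$, so the equivalence (\ref{it:cdld1})$\Leftrightarrow$(\ref{it:cdld3}) in the CD Lemma~\ref{cdld} shows at once that $\DIrr(\hat{S}_A)$ is a $\bk$-basis of $\bk\langle\Delta(X)\rangle/\DI(\hat{S}_A)=\freed{A}$. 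Thus essentially all the work lies in computing the set $\DIrr(\hat{S}_A)$ explicitly.

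The central computation is to show that for every $s\in S_A$ and $i\geq 0$,
$$\oline{d_X^i(s^{(0)})}=\oline{s}^{[i]}.$$
First I would note that the embedding $\hat{i}_X\colon \bk\langle X\rangle\to\bk\langle\Delta(X)\rangle$, $x\mapsto x^{(0)}$, is strictly order-preserving (it identifies $M(X)$ with the submonoid of level-$0$ words, on which $\prec$ restricts to the original deg-lex order), so $\oline{s^{(0)}}=\hat{i}_X(\oline{s})$; writing $\oline{s}=x_1\cdots x_r$ this is $x_1^{(0)}\cdots x_r^{(0)}$. Next I would pass the leading term through $d_X^i$: by the ``in particular'' clause of Lemma~\ref{lead}, $d_X^i$ is strictly order-preserving on monomials, so the leading term of $d_X^i(s^{(0)})$ comes from that of $d_X^i(\oline{s^{(0)}})$, giving $\oline{d_X^i(s^{(0)})}=\oline{d_X^i(\hat{i}_X(\oline{s}))}$. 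Finally, Lemma~\ref{lead}.(\ref{lead1}) (for $\lambda\neq0$) and Lemma~\ref{lead}.(\ref{lead2}) (for $\lambda=0$) evaluate this last leading term as $x_1^{(i)}\cdots x_r^{(i)}$ and $x_1^{(i)}x_2^{(0)}\cdots x_r^{(0)}$ respectively, which is exactly $\oline{s}^{[i]}$ by the definition in Eq.~\eqref{eq:diffpow}.

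With this in hand, the description of $\DIrr(\hat{S}_A)$ is bookkeeping. By definition the forbidden monomials are those of the form $q|_{\oline{d_X^i(s^{(0)})}}$ with $s\in S_A$, $q=a\star b\in S(\Delta(X))^\star$ and $i\geq0$. Substituting the computation above, such a monomial equals $a\,\oline{s}^{[i]}\,b$, and as $a,b$ range over $M(\Delta(X))$ these are precisely the $\frakx\in M(\Delta(X))$ admitting $\oline{s}^{[i]}$ as a subword, i.e. $\oline{s}^{[i]}\mid\frakx$. Since this condition depends on $s$ only through $\oline{s}$, ranging over $s\in S_A$ is the same as ranging over $\oline{s}\in\oline{S_A}$, and the complement $\DIrr(\hat{S}_A)$ consists exactly of the $\frakx$ with $\oline{s}^{[n]}\nmid\frakx$ for all $\oline{s}\in\oline{S_A}$ and $n\geq0$, which is Eq.~\eqref{eq:diffirr}.

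The main obstacle is the middle paragraph, specifically justifying $\oline{d_X^i(s^{(0)})}=\oline{d_X^i(\oline{s^{(0)}})}$: one must rule out cancellation of the top term when $d_X^i$ is applied to the full polynomial $s^{(0)}$ rather than to its leading monomial. This is controlled by the strict order-preservation of $d_X^i$ from Lemma~\ref{lead}, which forces the images of distinct monomials of $s^{(0)}$ to have distinct, hence noncancelling, leading terms; the two weight cases $\lambda\neq0$ and $\lambda=0$ must be treated in parallel since the shape of $\oline{s}^{[n]}$ in Eq.~\eqref{eq:diffpow} differs between them.
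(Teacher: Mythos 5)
Your proposal is correct and takes essentially the same route as the paper's proof: both hinge on the identity $\overline{d_X^n(s^{(0)})}=\oline{s}^{[n]}$ (no cancellation under $d_X^n$ plus the explicit leading terms of Lemma~\ref{lead}), then invoke Theorem~\ref{thm:gsfda} together with the CD Lemma~\ref{cdld} to get the basis, and finish with the same divisibility bookkeeping. The only difference is cosmetic: you justify the no-cancellation step by the ``in particular'' clause of Lemma~\ref{lead}, which is arguably a more precise citation than the paper's appeal to Lemma~\ref{mono}.
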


\begin{proof}
First for any $s\in S_A$, let
$\oline{s}\in \oline{S_A}$. Since the deg-lex order $\prec$ on $M(\Delta(X))$ is a monomial order by Lemma \ref{mono}, we have $\overline{d_X^n(s^{(0)})}=\overline{d_X^n(\oline{s}^{(0)})} ,\,n\geq0$. Then
 by Lemma \ref{lead}, $\overline{d_X^n(s^{(0)})}$
is exactly $\oline{s}^{[n]}$ defined in Eq.~(\mref{eq:diffpow}).

On the other hand, according to Theorem \ref{mtgs} the set $\hat{S}_A$
is a \gsb of the differential ideal $\DI(\hat{S}_A)$ in $\free{X}$ for arbitrary weight $\lambda$. Then by
the CD Lemma \ref{cdld}, the set of differential $\hat{S}_A$-irreducible elements
\[\DIrr(\hat{S}_A)=\left\{\frakx\in M(\Delta(X)) \,\Big|\,  \frakx \neq
\fraka\overline{d_X^n(s^{(0)})}\frakb\text{ for any }s\in S_A,\,\fraka,\frakb\in M(\Delta(X)),\,n\geq 0\right\}\]
is a linear basis of the free differential algebra $\freed{A}=\free{X}/\DI(\hat{S}_A)$.
Now  $\oline{s}^{[n]}=\overline{d_X^n(s^{(0)})}$
and $\frakx \neq
\fraka\overline{d_X^n(s^{(0)})}\frakb$ for any $\fraka,\frakb\in M(\Delta(X))$ mean  that $\oline{s}^{[n]}\nmid \frakx$. Thus $\DIrr(\hat{S}_A)$ can be rewritten as in Eq.~(\mref{eq:diffirr}).
\end{proof}

\subsection{Gr\"obner-Shirshov bases of free commutative differential algebras on commutative algebras}\mlabel{ss:gsfcda}

As we now see, in contrast to Theorem~\ref{mtgs}, its commutative version only holds for weight $\lambda\neq0$. If $\lambda=0$, the extension $\hat{S}$ of Gr\"{o}bner-Shirshov basis $S$ in  $\bk[X]$ may fail to be a \gsbs in  $\bk[\Delta(X)]$.

Define the following commutative algebra embeddings as in Eq.~(\mref{eq:emb}),
\[\hat{i}_X^{(n)}:\bk[X]\rar\bk[\Delta(X)],\,x\mapsto x^{(n)},x\in X\]
for all $n\geq0$, and $\hat{i}_X:=\hat{i}_X^{(0)}$.
\begin{theorem}
Let $A=\bk[X]/I_A$ be a commutative algebra with a generating set $X$ and a defining ideal $I_A$. If $I_A$ is generated by a Gr\"{o}bner-Shirshov basis $S_A$ in $\bk[X]$, then
\[\hat{S}_A:=\hat{i}_X(S_A)\]
is a Gr\"{o}bner-Shirshov basis in $\bk[\Delta(X)]$ when $\lambda\neq0$, so that $\freecd{A}$ is isomorphic to $\bk[\Delta(X)]/\DI(\hat{S}_A)$ as commutative differential algebras.
\mlabel{mtgsc}
\end{theorem}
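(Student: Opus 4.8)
The plan is to verify the composition-triviality condition for $\hat S_A$ directly, mirroring in the commutative setting the structure of the proof of Theorem~\ref{mtgs}. By Lemma~\ref{ckey0} it suffices to show that for all $s,t\in S_A$ and every ambiguity $w_{i,j}=\oline{d_X^i(\hat s)}\,u=\oline{d_X^j(\hat t)}\,v$ in $[\Delta(X)]$, with $\hat s:=\hat i_X(s)$ and $\hat t:=\hat i_X(t)$, the composition $[\hat s,\hat t]^{u,v}_{w_{i,j}}$ is trivial modulo $(\hat S_A,w_{i,j})$. The decisive point, and where $\lambda\neq0$ enters, is the shape of the leading terms: by Lemma~\ref{clead}.(\ref{clead1}) and the monotonicity of $d_X$ we have $\oline{d_X^n(\hat s)}=\hat i_X^{(n)}(\oline s)$, a monomial \emph{all of whose letters carry the single superscript $n$}. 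Hence $\oline{d_X^i(\hat s)}$ is supported on superscript-$i$ variables and $\oline{d_X^j(\hat t)}$ on superscript-$j$ variables.

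I would then split into the two cases of the proof of Lemma~\ref{ckey0}. When $\oline{d_X^i(\hat s)}$ and $\oline{d_X^j(\hat t)}$ are separated in $w_{i,j}$, the composition is trivial for the same formal reason as case~(a) there. When they overlap they must share a letter; but such a letter would have to carry both superscript $i$ and superscript $j$, forcing $i=j=:n$. The genuine overlap $w'=\hat i_X^{(n)}(\oline s)\,u'=\hat i_X^{(n)}(\oline t)\,v'$ then involves only superscript-$n$ letters, so $u'=\hat i_X^{(n)}(u'_0)$, $v'=\hat i_X^{(n)}(v'_0)$ and $w'=\hat i_X^{(n)}(w'_0)$ for some $u'_0,v'_0,w'_0\in[X]$ with $w'_0=\oline s\,u'_0=\oline t\,v'_0$; that is, the overlap is the image, under ``raise every superscript to $n$'', of a genuine composition ambiguity of $s$ and $t$ in $\bk[X]$.

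With this reduction in hand I would prove triviality through the three-term splitting used in Theorem~\ref{mtgs}:
\[
\begin{aligned}
[\hat s,\hat t]^{u',v'}_{w'}
={}&\Big(d_X^n(\hat s)^\natural u'-\lambda^{-(|w'|-1)n}d_X^n(\hat i_X(su'_0))\Big)\\
&-\Big(d_X^n(\hat t)^\natural v'-\lambda^{-(|w'|-1)n}d_X^n(\hat i_X(tv'_0))\Big)\\
&+\lambda^{-(|w'|-1)n}d_X^n\big(\hat i_X(su'_0-tv'_0)\big).
\end{aligned}
\]
For the first two bracketed terms, expanding $d_X^n$ of the two-factor products via the Leibniz rule~\eqref{eq:der2} and reading off the leading data from Lemma~\ref{clead} shows that each is a $\bk$-combination of $d_X^a(\hat s)$ (resp. $d_X^a(\hat t)$), $a\le n$, times monomials, with the two occurrences of the leading word $w'$ cancelling so that every surviving term has leading word $\prec w'$. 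For the last term, $su'_0-tv'_0$ is precisely the composition $[s,t]^{u'_0,v'_0}_{w'_0}$ in $\bk[X]$, which is trivial because $S_A$ is a \gsb, so $su'_0-tv'_0\equiv0\bmod(S_A,w'_0)$; applying $\hat i_X$ and then $d_X^n$, and invoking the monotonicity of $d_X$ (Lemma~\ref{clead}) together with $w'=\hat i_X^{(n)}(w'_0)$, forces every resulting term to have leading word $\prec w'$. Hence $[\hat s,\hat t]^{u',v'}_{w'}\equiv0\bmod(\hat S_A,w')$, and by Lemma~\ref{ckey0} the set $\hat S_A$ is a \gsb in $\bk[\Delta(X)]$.

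The main obstacle is the reduction of the second paragraph: that for $\lambda\neq0$ an overlap can occur only at equal derivative orders $i=j$ and then descends to a genuine ambiguity in $\bk[X]$. This is exactly what collapses when $\lambda=0$, for then Lemma~\ref{clead}.(\ref{clead2}) shows that $d_X^n$ of a product raises only the leading letter's superscript, so leading words carry mixed superscripts and no longer descend to $\bk[X]$-compositions---the source of the obstructions recorded for the classical case. Finally, combining the fact that $\hat S_A$ is a \gsb with the presentation $\freecd{A}=\bk[\Delta(X)]/\DI(\hat S_A)$ from Proposition~\ref{pp:atda} and the CD Lemma~\ref{ccdld} yields the asserted isomorphism of commutative differential algebras.
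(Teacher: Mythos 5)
Your proposal is correct and takes essentially the same route as the paper: the paper also uses Lemma~\ref{clead}.(\ref{clead1}) to conclude that for $\lambda\neq0$ every composition in $\hat S_A$ has equal derivative orders (an ambiguity $w_{n,n}$ descending to one in $\bk[X]$), and then verifies triviality by the same three-term splitting imported from the proof of Theorem~\ref{mtgs}. The only difference is that you spell out (via Lemma~\ref{ckey0} and the Leibniz expansion) the details that the paper compresses into ``it is similar to check.''
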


\begin{proof}
When $\lambda\neq 0$, one can easily modify the proof in Theorem \ref{mtgs} for the commutative case, with the ambiguities of all possible compositions in $\hat{S}_A$ as follows:
\[w_{n,n}=\oline{d_X^n(s)}u=\oline{d_X^n(t)}v\mbox{ for some }s,t\in\hat{S}_A,\,u,v\in[\Delta(X)]\mbox{ and }n\geq0\mbox{ such that }|\oline{s}|+|\oline{t}|>|w_{n,n}|,\]
according to Lemma \ref{clead} (\mref{clead1}). Then
it is similar to check that
$$ (s,t)^{u,v}_{w_{n,n}}=d_X^n(s)^\natural u-d_X^n(t)^\natural v\equiv 0\mod (\hat{S}_A,w_{n,n}).$$
Hence, $\hat{S}_A$
is a Gr\"{o}bner-Shirshov basis in $\bk[\Delta(X)]$ when $\lambda\neq0$.
\end{proof}

\begin{remark}\label{cgs0}
If weight $\lambda=0$, such an extension of Gr\"{o}bner-Shirshov bases might not be determined in general.
	
Indeed, when $\lambda=0$, the ambiguities of all possible compositions in $\hat{S}_A$ have the form
\[w_{m,n}=\oline{d_X^m(s)}u=\oline{d_X^n(t)}v\mbox{ for some }s,t\in\hat{S}_A,\,u,v\in [\Delta(X)]\mbox{ and }m,n\geq0\mbox{ such that }|\oline{s}|+|\oline{t}|>|w_{m,n}|,\]
by Lemma \ref{clead} (\mref{clead2}).
In this situation, we have more complicated compositions $[s,t]^{u,v}_{w_{m,n}}$ with different $m,n\geq0$, which do not appear in the other cases. Instead, it depends on the concrete algebra structure of $A$ to determine whether $[s,t]^{u,v}_{w_{m,n}}$ are trivial modulo $(\hat{S}_A,w_{m,n})$ or not. We will see that both possibilities can arise by examples in  Section \mref{ss:exam}.
\mlabel{rk:gswt0}
\end{remark}

As mentioned in Remark \ref{cgs0}, all possible compositions in $\hat{S}_A$ are of the form $[s,t]^{u,v}_{w_{m,n}}$ when $\lambda=0$. Thus we propose

\begin{conjecture}
	Let $A=\bk[X]/I_A$ be a commutative algebra with a generating set $X$ and a defining ideal $I_A$. If $I_A$ is generated by a Gr\"{o}bner-Shirshov basis $S_A$ in $\bk[X]$, let
	\[\hat{S}_A:=\hat{i}_X(S_A).\]
	Then the enlarged set $\hat{S}_A^+$ defined by
	 \[\hat{S}_A\cup\left\{[s,t]^{u,v}_{w_{m,n}}\,\big|\,s,t\in\hat{S}_A,\,u,v\in[\Delta(X)],\,m,n\geq0\mbox{ such that }|\oline{s}|+|\oline{t}|>|w_{m,n}|\right\}\]
	is a Gr\"{o}bner-Shirshov basis in $\bk[\Delta(X)]$ when $\lambda=0$, such that $\calc\cald_0(A)$ is isomorphic to $\bk[\Delta(X)]/\DI(\hat{S}_A^+)$ as commutative differential algebras.	
\end{conjecture}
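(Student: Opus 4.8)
The plan is to separate the statement into the Gr\"obner-Shirshov property of $\hat{S}_A^+$ and the identification of the quotient with $\calc\cald_0(A)$, the latter being a formal consequence of the former. Indeed each adjoined element $[s,t]^{u,v}_{w_{m,n}}=d_X^m(s)^\natural u-d_X^n(t)^\natural v$ lies in $\DI(\hat{S}_A)$, because $s,t\in\hat{S}_A$ and $\DI(\hat{S}_A)$ is stable under $d_X$; since also $\hat{S}_A\subseteq\hat{S}_A^+$, we obtain $\DI(\hat{S}_A^+)=\DI(\hat{S}_A)=\DI(\hat{i}_X(I_A))$, and Proposition~\mref{pp:atda} then identifies $\bk[\Delta(X)]/\DI(\hat{S}_A^+)$ with $\calc\cald_0(A)$. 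So the whole content is the Gr\"obner-Shirshov property, which I would attack through the criterion of Lemma~\mref{ckey0}: one must show that for all $p,q\in\hat{S}_A^+$ every composition $[p,q]^{a,b}_{w}$ is trivial modulo $(\hat{S}_A^+,w)$.

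I would split these compositions according to whether both factors are original generators. When $p,q\in\hat{S}_A$, the composition $[p,q]^{a,b}_{w_{m,n}}=d_X^m(p)^\natural a-d_X^n(q)^\natural b$ is, by the very definition of $\hat{S}_A^+$, one of the adjoined elements; since its two leading terms cancel we have $\oline{[p,q]^{a,b}_{w_{m,n}}}\prec w_{m,n}$, so the composition is trivial modulo $(\hat{S}_A^+,w_{m,n})$ as a single summand. This is exactly the purpose of the enlargement, and it absorbs in one stroke the mismatched-order compositions $m\neq n$ singled out in Remark~\mref{rk:gswt0} --- the very ones whose noncommutative analogues were instead reducible by the differentiation identity $d_X^n(f)a-d_X^n(g)b=d_X(d_X^{n-1}(f)a-d_X^{n-1}(g)b)-d_X^{n-1}(f)d_X(a)+d_X^{n-1}(g)d_X(b)$ used in the weight-zero part of Theorem~\mref{mtgs}. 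All the leading-term bookkeeping here is governed by Lemma~\mref{clead}.(\mref{clead2}) and the differential powers of Eq.~\eqref{eq:diffpow}.

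The genuine obstacle is every remaining composition, namely those in which at least one factor is a newly adjoined element $[s,t]^{u,v}_{w_{m,n}}$. Two difficulties compound. First, $\hat{S}_A^+$ is in general infinite, indexed by all overlaps and all $m,n\geq0$, so no finite enumeration is available. Second, and more seriously, for $m\neq n$ the leading monomial of $[s,t]^{u,v}_{w_{m,n}}$ is not one of the clean differential-power words of Eq.~\eqref{eq:diffpow} but the survivor of cancelling $d_X^m(s)^\natural u$ against $d_X^n(t)^\natural v$, so it blends letters of several derivative orders and resists a uniform description; controlling the compositions it then forms is correspondingly delicate. The heart of the conjecture is thus a one-round confluence claim: that closing $\hat{S}_A$ under compositions a single time already produces a self-reducing set, with no further Buchberger-type completion needed --- precisely the property that fails for general differential Gr\"obner bases, where completion may not terminate (cf.\ the differential Gr\"obner basis references in the introduction). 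The route I would pursue is to exploit the special origin of $\hat{S}_A^+$, namely that every generator starts at derivative order $0$: introduce a filtration of $\bk[\Delta(X)]$ by the maximal derivative order occurring and prove, by induction on this filtration, that each composition among adjoined elements rewrites, modulo strictly lower-order data, into a $\bk[\Delta(X)]$-combination of the compositions already handled together with the adjoined elements themselves. Making this induction close uniformly over all overlaps $w_{m,n}$ is where I expect the real work, and the most likely reason the assertion is posed as a conjecture rather than proved.
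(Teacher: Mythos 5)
You should first note that this statement is posed in the paper as a \emph{conjecture}: the paper offers no proof of it (only the motivation in Remark~\mref{rk:gswt0} and the examples of Section~\mref{ss:exam}), so the only question is whether your proposal settles it, and it does not. The genuine gap is the one you yourself flag: you never prove triviality of any composition in which at least one factor is an adjoined element $[s,t]^{u,v}_{w_{m,n}}$, and this class is the entire content of the conjecture. Your proposed filtration by maximal derivative order is a strategy, not an argument --- no reduction step is carried out and no closure is verified --- so the ``one-round confluence'' assertion is exactly as open at the end of your proposal as at the beginning. The paper's own weight-zero example $A=\bk[x]/(x^2)$ shows why this layer cannot be waved through: with $\hat{S}_A=\{(x^{(0)})^2\}$ one adjoins $[(x^{(0)})^2,(x^{(0)})^2]_{w_{2,1}}=(x^{(1)})^3$, but then $(x^{(1)})^3$ and its derivatives $d_X^k((x^{(1)})^3)$ form new overlaps with the words $\oline{d_X^j((x^{(0)})^2)}=x^{(j)}x^{(0)}$, and deciding whether all of these reduce over $\hat{S}_A^+$ runs into the full combinatorial structure of $\DI\big((x^{(0)})^2\big)$ described by Levi's theorem \cite{Le}; the paper deliberately stops short of claiming this, and under the deg-lex order it is precisely what remains unknown.

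Two parts of your proposal are correct and worth keeping, with one caveat. First, the isomorphism $\bk[\Delta(X)]/\DI(\hat{S}_A^+)\cong\calc\cald_0(A)$ is indeed a formal consequence of $\DI(\hat{S}_A^+)=\DI(\hat{S}_A)$ (every adjoined element lies in the differential ideal $\DI(\hat{S}_A)$, and $\hat{S}_A\subseteq\hat{S}_A^+$) together with Proposition~\mref{pp:atda}; it requires no Gr\"obner-Shirshov input at all. Second, compositions between two elements of $\hat{S}_A$ are trivial modulo $(\hat{S}_A^+,w_{m,n})$ essentially by construction, since any such nonzero composition is itself a member of the enlarged set and its leading word is $\prec w_{m,n}$ after the two monic leading terms cancel. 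The caveat: as written, $\hat{S}_A^+$ need not consist of monic polynomials, while Definition~\mref{cgbda} and Lemma~\mref{ccdld} require monicity, so one should adjoin the normalizations $\big([s,t]^{u,v}_{w_{m,n}}\big)^\natural$ and discard the zero compositions; this is cosmetic but should be said. None of this, however, touches the hard class of compositions, so the proposal must be judged an honest but incomplete reduction of the conjecture, not a proof.
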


\subsection{Linear basis of the free commutative differential algebra on a commutative algebra}
\mlabel{ss:basisfcda}

We now apply Theorem~\mref{mtgsc} to give a canonical basis of the free commutative differential algebra on a commutative algebra if the latter is given by a \gsb, when the weight is not zero. When the weight is zero, we provide an example where the statement of Theorem~\mref{mtgsc} still holds and leave further discussions to the next subsection.

For a well-ordered set $X=\{x_i\,|\,i\in I\}$, the free commutative monoid on $\Delta(X)$ can be expressed as
\[[\Delta(X)]=\left\{\prod_{i\in I}\prod_{k\geq0}(x_i^{(k)})^{a_{ik}}\,\bigg|\,a_{ik}\geq0,\sum_{i\in I,k\geq 0} a_{ik}<\infty\right\}.\]
Define a partial order $\mid$ on $[\Delta(X)]$ as follows.
For $\fraka=\prod_{i\in I}\prod_{k\geq0}(x_i^{(k)})^{a_{ik}},\,\frakb=\prod_{i\in I}\prod_{k\geq0}(x_i^{(k)})^{b_{ik}}\in[\Delta(X)]$, we say $\fraka$ {\bf divides} $\frakb$, denoted by $\fraka\mid\frakb$, if
\[a_{ik}\leq b_{ik}\text{ for all }i\in I,k\geq0.\]
Otherwise, we write $\fraka\nmid\frakb$. Note that $\fraka\mid\frakb$ is clearly equivalent to that $\frakb=\fraka\frakc$ for some $\frakc\in[\Delta(X)]$.

For $n\geq 0$ and $\frakx=x_1\cdots x_k\in [X]$ with $x_1\geq\cdots\geq x_k\in X$, we define
\begin{equation}
\frakx^{[n]}:=\begin{cases}
x_1^{(n)}\cdots x_k^{(n)},&\text{if }\lambda\neq0,\\
x_1^{(n)}x_2^{(0)}\cdots x_k^{(0)},&\text{if }\lambda=0.
\end{cases}
\mlabel{eq:cdiffpow}
\end{equation}
Further, we write $f^{(0)}:=\hat{i}_X(f)\in \bk[\Delta(X)]$
for any polynomial $f\in\bk[X]$.

\begin{prop}
Let $A$ be a commutative algebra with presentation $\bk[X]/I_A$ and $S_A$ be a Gr\"{o}bner$($-Shirshov$)$ basis of $I_A$ in $\bk[X]$. Let $\oline{S_A}:=\{\bar{s}\in[X]\,|\, s\in S_A\}$ be the set of leading terms from $S_A$, and $\hat{S}_A:=\hat{i}_X(S_A)=\left\{s^{(0)}\,\big|\, s\in S_A\right\}$.
When $\lambda\neq 0$, the set of differential $\hat{S}_A$-irreducible words
\begin{equation}
\DIrr(\hat{S}_A):=\left\{\frakx\in[\Delta(X)]\,\Big|\,\oline{s}^{[n]}\nmid\frakx\text{ for any }\oline{s}\in\oline{S_A},\,n\geq0 \right\},
\mlabel{eq:cdiffirr}
\end{equation}
giving a $\bk$-linear basis of $\freecd{A}$.
\mlabel{prop:fcdab}
\end{prop}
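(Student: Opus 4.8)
The plan is to transcribe the proof of Proposition~\ref{prop:fdab} into the commutative setting, replacing each noncommutative ingredient by the commutative counterpart established earlier in this section. The skeleton is identical: identify the leading words $\overline{d_X^n(s^{(0)})}$, feed $\hat{S}_A$ into the relevant composition-diamond lemma to extract a basis of irreducible words, and then rewrite the resulting description in terms of the divisibility relation $\mid$ on $[\Delta(X)]$.

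First I would fix $s\in S_A$ and compute the leading word $\overline{d_X^n(s^{(0)})}$ for each $n\geq0$. Since $\prec$ is a monomial order on $[\Delta(X)]$ and $d_X$ preserves $\prec$ (the last sentence of Lemma~\ref{clead}), the leading term of $s^{(0)}$ controls that of its $d_X^n$-image, so that $\overline{d_X^n(s^{(0)})}=\overline{d_X^n(\oline{s}^{(0)})}$. Applying Lemma~\ref{clead}.(\ref{clead1}), which is the $\lambda\neq0$ branch, I would identify this leading word with $\oline{s}^{[n]}$ as defined in Eq.~(\ref{eq:cdiffpow}). It is precisely here that the hypothesis $\lambda\neq0$ enters, since the formula in Lemma~\ref{clead}.(\ref{clead2}) for $\lambda=0$ produces a different (and less uniform) leading word.

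Next I would invoke Theorem~\ref{mtgsc}: because $\lambda\neq0$, the set $\hat{S}_A=\hat{i}_X(S_A)$ is a (differential) Gr\"obner-Shirshov basis of $\DI(\hat{S}_A)$ in $\bk[\Delta(X)]$. The commutative composition-diamond lemma (Lemma~\ref{ccdld}, implication (\ref{it:ccdld1})$\Rightarrow$(\ref{it:ccdld3})) then gives that the set of differential $\hat{S}_A$-irreducible words
\[
\DIrr(\hat{S}_A)=\left\{\frakx\in[\Delta(X)]\,\Big|\,\frakx\neq\overline{d_X^n(s^{(0)})}\,u\text{ for any }s\in S_A,\,u\in[\Delta(X)],\,n\geq0\right\}
\]
is a $\bk$-basis of $\freecd{A}=\bk[\Delta(X)]/\DI(\hat{S}_A)$, where the latter presentation is the isomorphism furnished by Theorem~\ref{mtgsc}.

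Finally I would rewrite this description using $\mid$. Substituting $\overline{d_X^n(s^{(0)})}=\oline{s}^{[n]}$ from the first step, and recalling that in the commutative monoid $\fraka\mid\frakb$ is equivalent to $\frakb=\fraka\frakc$ for some $\frakc\in[\Delta(X)]$, the condition ``$\frakx\neq\overline{d_X^n(s^{(0)})}\,u$ for all $u$'' is exactly ``$\oline{s}^{[n]}\nmid\frakx$''. This converts the set above into the form in Eq.~(\ref{eq:cdiffirr}). I expect no genuine obstacle in the argument; the only point demanding care is that every step relies on Theorem~\ref{mtgsc} and on the leading-term formula of Lemma~\ref{clead}.(\ref{clead1}), both of which are valid only for $\lambda\neq0$, so the statement is correctly restricted to nonzero weight.
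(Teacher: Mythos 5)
Your proposal is correct and follows essentially the same route as the paper's own proof: identify $\overline{d_X^n(s^{(0)})}=\oline{s}^{[n]}$ via the monomial-order property of $\prec$ and Lemma~\ref{clead}.(\ref{clead1}), apply Theorem~\ref{mtgsc} together with the composition-diamond Lemma~\ref{ccdld} to get the irreducible words as a basis of $\freecd{A}$, and then translate the non-occurrence condition into the divisibility relation $\nmid$ on $[\Delta(X)]$. Your explicit remark on where the hypothesis $\lambda\neq0$ enters is a fair (and accurate) elaboration of what the paper leaves implicit.
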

\begin{proof}
This proposition can be proven following the noncommutative case in Proposition \mref{prop:fdab}.
For any $s\in S_A,n\geq0$, let
$\oline{s}\in \oline{S_A}$.
By the property \eqref{cmono} of $\prec$ on $[\Delta(X)]$  and Lemma \ref{clead},
we know that $\overline{d_X^n(s^{(0)})}=\overline{d_X^n(\oline{s}^{(0)})}$ and $\overline{d_X^n(s)}$ is exactly $\oline{s}^{[n]}$ defined in (\mref{eq:cdiffpow}) for any $n\geq0$.

On the other hand, according to Theorem \ref{mtgsc}, the set $\hat{S}_A:=\hat{i}_X(S_A)$
is a \gsb of the differential ideal $\DI(S_A)$ in $\freecd{A}$ when $\lambda\neq0$. Hence, by CD Lemma \ref{ccdld}, the set of differential $\hat{S}_A$-irreducible elements
\[\DIrr(\hat{S}_A)=\left\{ u\in [\Delta(X)] \,\Big|\,  u \neq
\overline{d_X^n(s^{(0)})}v\text{ for any }s\in S_A,v\in[\Delta(X)],n\geq0\right\}\]
is a linear basis of  $\freecd{A}=\freec{X}/\DI(\hat{S}_A)$.
Then the fact that $\DIrr(\hat{S}_A)$ is also given by (\mref{eq:cdiffirr}) is due to the description of $\fraka\mid\frakb$ above.	
\end{proof}
	
As noted in Remark~\mref{cgs0}, in the case of weight $\lambda=0$, the question is open on whether or not a \gsb of a commutative algebra can be extended to a differential \gsb of the free commutative differential algebra on this commutative algebra. Here we will provide an example where the answer to the above question is positive, while it seems quite common that the answer to the question is negative from other examples in the next subsection.

Let $X=\{x,y\}$ and let $A$ be the algebra $A=\bk[x,y]/(x+y+1)$. With the order $x>y$, the set $\{x+y+1\}$ is a \gsb of the ideal $(x+y+1)$ and $\Irr(\{x+y+1\})=\{y^k\,|\,k\geq0\}$.

\begin{prop}\label{polyp}
	Let $A=\bk[x,y]/(x+y+1)$, and $X=\{x,y\}$ with $x>y$.
	\begin{enumerate}
		\item
		The free differential algebra $\freed{A}$ on $A$ of weight $\lambda$ is
		\[\bk\langle \Delta(X)\rangle/\left(x^{(m)}+y^{(m)}+\delta_{m,0}\,\big|\,m\geq0\right).\]
		The generating set $\hat{S}:=\left\{x^{(0)}+y^{(0)}+1\right\}$ of the differential ideal
		is a Gr\"{o}bner-Shirshov basis in $\bk\langle \Delta(X)\rangle$ for arbitrary weight $\lambda$. The set \[\DIrr(\hat{S})=M\left(\left\{y^{(k)}\,\bigg|\,k\geq0\right\}\right)\]
		is a $\bk$-basis of $\freed{A}$;
		\mlabel{it:polyp1}
		\item
		The free commutative differential algebra $\freecd{A}$ on $A$ of weight $\lambda$ (including $\lambda=0$) is
		 \[\bk[\Delta(X)]/\left(x^{(m)}+y^{(m)}+\delta_{m,0}\,\big|\,m\geq0\right).\]
		The generating set $\hat{S}:=\left\{x^{(0)}+y^{(0)}+1\right\}$ of the differential ideal
		is a Gr\"{o}bner-Shirshov basis
		in $\bk[\Delta(X)]$ for weight $\lambda$, and
		 \[\DIrr(\hat{S})=\left[\left\{y^{(m)}\,\bigg|\,m\geq0\right\}\right]\]
		is a $\bk$-basis of $\freecd{A}$.
	\end{enumerate}
\end{prop}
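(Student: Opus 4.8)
The plan is to reduce both items to a single observation: the defining relation $s:=x+y+1$ has a one-letter leading monomial, so that the \gsb property is inherited with essentially no compositions to check. With $x>y$ we have $\oline{s}=x$, and the same polynomial $S_A=\{s\}$ is a \gsb of $\ideal(s)$ in both $\bk\langle x,y\rangle$ and $\bk[x,y]$: each of its compositions occurs inside a word $w$ with $|w|<|\oline{s}|+|\oline{s}|=2$, hence $|w|=1$, which forces it to vanish. Thus $\Irr(S_A)=\{y^k\mid k\ge0\}$ and $A\cong\bk[y]$ in either presentation. To read off the differential ideal, write $\hat{S}=\{s^{(0)}\}=\{x^{(0)}+y^{(0)}+1\}$; since $d_X(x^{(0)})=x^{(1)}$, $d_X(y^{(0)})=y^{(1)}$ and $d_X(1)=0$, Eq.~\eqref{dix} gives
\[\DI(\hat{S})=\ideal\!\left(d_X^m(x^{(0)}+y^{(0)}+1)\mid m\ge0\right)=\ideal\!\left(x^{(m)}+y^{(m)}+\delta_{m,0}\mid m\ge0\right),\]
so Proposition~\ref{pp:atda} yields the presentations of $\freed{A}$ and $\freecd{A}$ asserted in both items.

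For item~(\ref{it:polyp1}) I would invoke Theorem~\ref{mtgs}: as $S_A$ is a \gsb of $I_A$ in $\bk\langle X\rangle$, its image $\hat{S}=\hat{i}_X(S_A)$ is a differential \gsb of $\DI(\hat{S})$ in $\free{X}$ for \emph{every} weight $\lambda$. Because $\oline{s}=x$ is a single letter, Eq.~\eqref{eq:diffpow} gives $\oline{s}^{[n]}=x^{(n)}$ for all $\lambda$ and all $n\ge0$, so the description of $\DIrr(\hat{S})$ in Proposition~\ref{prop:fdab} collapses to the words of $M(\Delta(X))$ containing no letter $x^{(n)}$, that is $M(\{y^{(k)}\mid k\ge0\})$, as claimed.

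For item (ii) the case $\lambda\ne0$ runs identically: Theorem~\ref{mtgsc} makes $\hat{S}$ a \gsb in $\bk[\Delta(X)]$, and Proposition~\ref{prop:fcdab} with $\oline{s}^{[n]}=x^{(n)}$ produces the basis $[\{y^{(m)}\mid m\ge0\}]$. The genuinely delicate point, and the step I expect to be the main obstacle, is $\lambda=0$, which Remark~\ref{cgs0} flags as problematic in general, since there the compositions $[s,t]^{u,v}_{w_{m,n}}$ with $m\ne n$ may be nontrivial. Here I would settle it directly through Lemma~\ref{ckey0}, exploiting that $\oline{d_X^m(s^{(0)})}=x^{(m)}$ is a single letter for every $m$: a composition $[s^{(0)},s^{(0)}]^{u,v}_{w_{m,n}}$ requires $w_{m,n}=x^{(m)}u=x^{(n)}v$ with $|w_{m,n}|<2$, which forces $|w_{m,n}|=1$, hence $m=n$ and $u=v=1$, so the composition is identically $0$. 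As there are thus no nontrivial compositions, $\hat{S}$ is a \gsb in $\bk[\Delta(X)]$ for $\lambda=0$ as well, and CD Lemma~\ref{ccdld} delivers $\DIrr(\hat{S})=[\{y^{(m)}\mid m\ge0\}]$. What rescues this example where Theorem~\ref{mtgsc} would otherwise offer no guarantee is exactly the single-letter leading monomial $x^{(m)}$, which makes every potential overlap impossible; the remaining verification that no $x^{(m)}$-free word is reducible is then routine.
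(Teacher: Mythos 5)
Your proposal is correct, and its overall skeleton matches the paper's: both reduce items (i) and (ii) for $\lambda\neq0$ (and the noncommutative $\lambda=0$ case) to Proposition~\ref{pp:atda}, Theorem~\ref{mtgs}, Proposition~\ref{prop:fdab}, Theorem~\ref{mtgsc} and Proposition~\ref{prop:fcdab}, and both isolate the commutative weight-zero case as the one needing a separate argument. Where you genuinely diverge is in how that case is settled. The paper does \emph{not} argue vacuously: following Remark~\ref{cgs0}, it treats the ambiguities $w_{m,n}=x^{(m)}x^{(n)}$ for all $m,n\geq0$ as items to be checked, and verifies their triviality by the explicit rewriting
\[
\left(y^{(m)}+\delta_{m,0}\right)x^{(n)}-\left(y^{(n)}+\delta_{n,0}\right)x^{(m)}
=\left(y^{(m)}+\delta_{m,0}\right)d_X^n(s^{(0)})-\left(y^{(n)}+\delta_{n,0}\right)d_X^m(s^{(0)})
\equiv0 \mod(\hat{S},w_{m,n}).
\]
You instead observe that these $w_{m,n}$ with $m\neq n$ fail the degree bound $|w_{m,n}|<|\oline{s}|+|\oline{t}|=2$ in the definition of a composition, so by Definition~\ref{cgbda} there is literally nothing to check and $\hat{S}$ is a \gsb vacuously. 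Your argument is sound within the paper's own framework: the reason the degree bound can be imposed at all is Lemma~\ref{ckey0}, whose case (a) shows that ambiguities with separated (here, coprime single-letter) leading terms are automatically trivial, and that is exactly what covers your excluded $w_{m,n}$ before the CD Lemma~\ref{ccdld} is applied. So your route buys brevity and makes transparent \emph{why} the example escapes the weight-zero obstruction (single-letter leading monomials admit no overlaps), while the paper's computation buys robustness: it directly establishes the triviality of \emph{all} ambiguities --- the statement actually consumed in the proof of the CD lemma --- without leaning on the precise formulation of the degree condition, and it illustrates concretely the phenomenon announced in Remark~\ref{cgs0}, namely that weight-zero compositions $[s,t]_{w_{m,n}}$ with $m\neq n$ are where such extensions can fail (as they do for $\bk[x]/(x^2)$).
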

\begin{proof}
	We only prove the commutative case when $\lambda=0$. Other cases follows directly from Theorem \ref{mtgs}, Proposition \mref{prop:fdab}, Theorem \ref{mtgsc} and Proposition \mref{prop:fcdab}.
	
	According to Remark \ref{cgs0}, in order to show that $\hat{S}=\left\{x^{(0)}+y^{(0)}+1\right\}$ is a Gr\"{o}bner-Shirshov basis
	in $\bk[\Delta(X)]$ when $\lambda=0$, we only need to check that the following compositions in $\hat{S}$,
	 \[[x^{(0)}+y^{(0)}+1,x^{(0)}+y^{(0)}+1]_{w_{m,n}}=\left(x^{(m)}+y^{(m)}+\delta_{m,0}\right)x^{(n)}-\left(x^{(n)}+y^{(n)}+\delta_{n,0}\right)x^{(m)},\]
	are trivial modulo $(\hat{S},w_{m,n})$, where $w_{m,n}=x^{(m)}x^{(n)}$ for all $m,n\geq0$. Indeed,
	\[\begin{split}
	 [x^{(0)}&+y^{(0)}+1,x^{(0)}+y^{(0)}+1]_{w_{m,n}}=\left(y^{(m)}+\delta_{m,0}\right)x^{(n)}-\left(y^{(n)}+\delta_{n,0}\right)x^{(m)}\\
	 &=\left(y^{(m)}+\delta_{m,0}\right)\left(x^{(n)}+y^{(n)}+\delta_{n,0}\right)-\left(y^{(n)}+\delta_{n,0}\right)\left(x^{(m)}+y^{(m)}+\delta_{m,0}\right)\\
	&\equiv0\mod(\hat{S},w_{m,n}).
	\end{split}\]
	Furthermore, since $\oline{x^{(m)}+y^{(m)}+\delta_{m,0}}=x^{(m)}$ for all $m\geq0$, by CD Lemma \ref{ccdld}, the set
	 \[\DIrr(\hat{S})=\left[\left\{y^{(m)}\,\bigg|\,m\geq0\right\}\right]\]
	is a $\bk$-basis of $\freecd{A}=\bk[\Delta(X)]/\DI(\hat{S})$.
	
	In fact, it is easy to see that $\freecd{A}\cong\bk[y^{(k)}\,|\,k\geq0]$ as differential algebras.
\end{proof}

\subsection{Examples of free differential algebras on algebras}
\mlabel{ss:exam}

As demonstrations of the utility of the general results on free differential algebras on algebras in the previous sections, we give several concrete examples where the canonical basis can be made completely explicit.

\subsubsection{Free differential algebras on algebras with one generator}

Let $(R,d)$ be a differential algebra with one generator $u$ and let $A$ be the subalgebra of $R$ generated by $u$. Then $(R,d)$ is a differential algebra generated by $A$. Also $A$ is a quotient algebra of the free algebra on one generator and hence is of the form $\bk[x]/(f(x))$. It then follows that a free differential algebra on one generator is of the form $\freed{A}$ with $A=\bk[x]/(f(x))$ and a free commutative differential algebra on one generator is of the form $\freecd{A}$ with $A=\bfk[x]/(f(x))$.

\begin{prop}\mlabel{poly}
Let $A=\bk[x]/(f(x))$ with $f\in \bk[x]$ of degree $n>0$, and $X=\{x\}$.
\begin{enumerate}
\item
The free differential algebra $\freed{A}$ on $A$ is
\[\bk\langle \Delta(X)\rangle/\left(d_X^m(f(x^{(0)}))\,\big|\,m\geq0\right).\]
The generating set $\hat{S}:=\left\{f(x^{(0)})\right\}$ of the differential ideal
is a Gr\"{o}bner-Shirshov basis in $\bk\langle \Delta(X)\rangle$ for arbitrary weight $\lambda$. The set \[\DIrr(\hat{S})=\left\{\frakx\in M(\Delta(X))\,\Big|\,x^{(m)}(x^{(m(1-\delta_{\lambda,0}))})^{n-1}\nmid\frakx,\,m\geq0\right\}\]
is a $\bk$-basis of $\freed{A}$;
\mlabel{it:poly1}
\item\mlabel{it:poly2}
The free commutative differential algebra $\freecd{A}$ of weight $\lambda$ on $A$ is
\[\bk[\Delta(X)]/\left(d_X^m(f(x^{(0)}))\,\big|\,m\geq0\right).\]
Let $\lambda\neq 0$. Then the generating set $\hat{S}:=\left\{f(x^{(0)})\right\}$ of the differential ideal
is a Gr\"{o}bner-Shirshov basis
 in $\bk[\Delta(X)]$. Further, the $\bk$-basis $\DIrr(\hat{S})$ of $\freecd{A}$ can be written more explicitly as
 \[\left\{\prod\nolimits_{i\geq0}(x^{(i)})^{n_i}\,\bigg|\, n_i\in\{0,1,\dots,n-1\} \text{ and } n_i=0 \text{ for almost all } i\geq 0\right\}.\]
\end{enumerate}
\end{prop}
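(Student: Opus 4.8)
The plan is to reduce both parts to the general machinery already established, since the one-generator hypothesis makes the underlying \gsb of $A$ trivial to identify. First I would note that for $X=\{x\}$ the free algebra $\bk\langle x\rangle$ and the polynomial algebra $\bk[x]$ coincide, and that in a one-variable polynomial algebra the (monic normalization of the) singleton $S_A=\{f(x)\}$ is automatically a \gsb of the ideal $I_A=(f(x))$. The only candidate intersection compositions come from the commuting overlaps $\oline{f}\,x^k=x^k\,\oline{f}$, which produce $f^\natural x^k-x^k f^\natural=0$ identically, and inclusion compositions are impossible for degree reasons; this is where the one-letter structure does all the work, and it holds verbatim in both the commutative and noncommutative settings.

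With $S_A=\{f(x)\}$ in hand, I would invoke Proposition~\ref{pp:atda} together with Eq.~\eqref{dix} to present $\freed{A}$ (resp. $\freecd{A}$) as $\bk\langle\Delta(X)\rangle/\DI(\hat{S})$ (resp. $\bk[\Delta(X)]/\DI(\hat{S})$), where $\hat{S}=\{f(x^{(0)})\}$ and $\DI(\hat{S})=(d_X^m(f(x^{(0)}))\mid m\geq0)$; the identification of the differential ideal generated by $\hat{i}_X(I_A)$ with that generated by $\hat{S}$ holds because $\hat{i}_X$ is an algebra homomorphism sending the single ideal generator $f(x)$ to $f(x^{(0)})$. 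Then Theorem~\ref{mtgs} shows $\hat{S}$ is a differential \gsb for every weight $\lambda$, while Theorem~\ref{mtgsc} gives the same in the commutative case under $\lambda\neq0$, and the basis assertions follow directly from Propositions~\ref{prop:fdab} and~\ref{prop:fcdab}.

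The remaining work is to make the irreducible sets explicit. Here I would compute $\oline{d_X^m(f(x^{(0)}))}$: since the leading monomial of $f(x^{(0)})$ is $(x^{(0)})^n$, the identity $\oline{d_X^m(s^{(0)})}=\oline{d_X^m(\oline{s}^{(0)})}$ from the proof of Proposition~\ref{prop:fdab} reduces this to applying Lemma~\ref{lead} (resp. Lemma~\ref{clead}) to $(x^{(0)})^n$, yielding $(x^{(m)})^n$ when $\lambda\neq0$ and $x^{(m)}(x^{(0)})^{n-1}$ when $\lambda=0$. These two cases are exactly the uniform word $x^{(m)}(x^{(m(1-\delta_{\lambda,0}))})^{n-1}$, i.e. $(x^n)^{[m]}$ of Eq.~\eqref{eq:diffpow}, which produces the description in Part~\ref{it:poly1}. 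For Part~\ref{it:poly2} with $\lambda\neq0$, in the commutative monoid $[\Delta(X)]=\{\prod_{i\geq0}(x^{(i)})^{n_i}\}$ the condition $(x^{(m)})^n\nmid\frakx$ simply says the exponent $n_m$ is strictly below $n$; imposing this for all $m$ gives precisely the product form with $n_i\in\{0,1,\dots,n-1\}$ and almost all $n_i=0$.

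The hard part, such as it is, is not any single deep step but the bookkeeping of leading terms across the two weight regimes: in particular, confirming that the $\delta_{\lambda,0}$-shorthand in Eq.~\eqref{eq:diffpow} correctly merges the $\lambda\neq0$ and $\lambda=0$ computations of $\oline{d_X^m(f(x^{(0)}))}$, and verifying that the automatic triviality of compositions of $\{f(x)\}$ genuinely persists in the noncommutative algebra $\bk\langle x\rangle$, where one must observe that the apparent overlap compositions collapse because the one-letter free monoid $M(\{x\})$ is commutative.
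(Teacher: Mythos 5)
Your proposal is correct and follows essentially the same route as the paper: present $\freed{A}$ and $\freecd{A}$ via Proposition~\ref{pp:atda}, note that $S=\{f(x)\}$ is a \gsb in $\bk\langle x\rangle=\bk[x]$, apply Theorem~\ref{mtgs} (resp.\ Theorem~\ref{mtgsc} for $\lambda\neq0$), and read off $\DIrr(\hat{S})$ from Propositions~\ref{prop:fdab} and~\ref{prop:fcdab} after computing $\oline{f}^{[m]}=x^{(m)}(x^{(m(1-\delta_{\lambda,0}))})^{n-1}$. The only difference is that you spell out details the paper treats as immediate, namely the triviality of the compositions of $\{f(x)\}$ in one variable and the exponent-wise reading of divisibility in $[\Delta(X)]$.
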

\begin{proof}
(\mref{it:poly1})
Taking $X=\{x\}$ in Proposition~\ref{pp:atda}, we have
\[\freed{A}=\bk\langle \Delta(X)\rangle\big/\DI(\hat{S})
\quad\text{and}\quad
\freecd{A}=\bk[ \Delta(X)]\big/\DI(\hat{S}),\]
with $\DI(\hat{S})=\left(d_X^m(f(x^{(0)}))\,\big|\,m\geq0\right)$. Clearly $S=\{f(x)\}$ is a Gr\"{o}bner-Shirshov basis in $\bk[x]$. Also by Theorem \ref{mtgs},
$\hat{S}=\left\{f(x^{(0)})\right\}$ is a Gr\"{o}bner-Shirshov basis in $\bk\langle \Delta(X)\rangle$ for arbitrary weight $\lambda$.
Since $f$ has the leading term $\oline{f}=x^n$, we have $\oline{f}^{[m]}=x^{(m)}(x^{(m(1-\delta_{\lambda,0}))})^{n-1},\,m\geq0$, and the $\bk$-basis $\DIrr(\hat{S})$ of $\freed{A}$ takes its desired form by Proposition \mref{prop:fdab}.
\smallskip

\noindent
\eqref{it:poly2}
The argument is similar for the commutative case $\freecd{A}$ when $\lambda\neq0$, according to Theorem~\ref{mtgsc} and Proposition \mref{prop:fcdab}, with its $\bk$-basis $\DIrr(\hat{S})$ easily written down as indicated.
\end{proof}	

\begin{remark}
However, when $\lambda=0$, $\hat{S}$ is not necessarily a Gr\"{o}bner-Shirshov basis in $\bk[\Delta(X)]$. Hence the corresponding statement in Proposition~\mref{poly} does not hold. In the following, we provide an example to show that whether the extension of Gr\"{o}bner-Shirshov bases works or not depends on the choice of monomial orders. See also Proposition~\ref{fcg}.
\end{remark}

Consider the algebra $A=\bk[x]/(x^2)$ of dual numbers and  $X=\{x\}$. The free commutative differential algebra $\freecd{A}$ on $A$ is
\[\bk[\Delta(X)]\big/\left(d_X^m((x^{(0)})^2)\,\big|\,m\geq0\right).\]
When $\lambda\neq0$, $\hat{S}=\left\{(x^{(0)})^2\right\}$
is a Gr\"{o}bner-Shirshov basis in $\bk[\Delta(X)]$ so that
\[\DIrr(\hat{S})=
\left\{\prod\nolimits_{i\geq0}(x^{(i)})^{n_i}\,\bigg|\,\sum\nolimits_{i\geq1}n_i<\infty\mbox{ with all }n_i=0,1\right\}\]
as a $\bk$-basis of $\freecd{A}$ with respect to the order $\prec$ given in Eq.~(\mref{eq:cmon}).

On the other hand, when $\lambda=0$, $\hat{S}=\left\{(x^{(0)})^2\right\}$ can not be a Gr\"{o}bner-Shirshov basis in $\bk[\Delta(X)]$ with respect to this order when $\lambda=0$.
Indeed, we find that the composition
\[[(x^{(0)})^2,(x^{(0)})^2]_{w_{2,1}} =2^{-1}\left(d_X^2((x^{(0)})^2)x^{(1)}-d_X((x^{(0)})^2)x^{(2)}\right) =(x^{(1)})^3,\]
with $w_{2,1}=x^{(2)}x^{(1)}x^{(0)}$, cannot be any linear combination of differential words in $\DI(\hat{S})$ with leading terms $\prec w_{2,1}$, namely,
$$d_X^2((x^{(0)})^3), d_X^2((x^{(0)})^2)x^{(0)}, x^{(2)}(x^{(0)})^2, x^{(1)}d_X((x^{(0)})^2), $$ $$3^{-1}d_X((x^{(0)})^3)=2^{-1}d_X((x^{(0)})^2)x^{(0)}=x^{(1)}(x^{(0)})^2, (x^{(0)})^3.
$$
Hence, it is nontrivial modulo $(\hat{S},w_{2,1})$ by Definition~\ref{cgbda}.(\mref{it:cgbda1}), and thus $\hat{S}$ is not a Gr\"{o}bner-Shirshov basis in $\bk[\Delta(X)]$ when $\lambda=0$. It can also be seen by CD Lemma~\mref{ccdld}, since now  $(x^{(1)})^3\in\DIrr(\hat{S})\cap\DI(\hat{S})$. This counter example shows that Proposition~\mref{poly}.(\mref{it:poly2}) does not hold when $\lambda=0$.

\medskip
Alternatively, due to the classical result of Levi in \cite[Theorem 1.1]{Le} with $p=2$, we obtain \[\DI\left((x^{(0)})^2\right)=\left(\sum\nolimits_{i=0}^m{m\choose i}x^{(m-i)}x^{(i)}\,\bigg|\,m\geq0\right)\]
and
\[\calc\cald_0(A)\cong\bk\left[x^{(r)}\,\big|\,r\geq0\right]\bigg/\DI\left((x^{(0)})^2\right)\]
have a $\bk$-basis consisting of the so-called {\bf $\gamma$-terms} and {\bf $\alpha$-terms} respectively.
Denote the monomial basis elements of $\bk[\Delta(X)]$ as
\[\frakx_\alpha:=\prod_{r\geq 0}(x^{(r)})^{a_r}\in [\Delta(X)],\]
where $\alpha=(a_0,a_1,\dots)$ with all $a_r\in\NN$ and almost all zero.
Then $\frakx_\alpha$ is called an $\alpha$-term if $a_r+a_{r+1}<2$ for all $r\geq0$,  equivalently for consecutive $x^{(r)}, x^{(r+1)}, r\geq0$ in $\frakx_\alpha$ at most one of them can appear in $\frakx_\alpha$ and can appear at most once.

Now we choose a lexicographic order $<_{\lex}$ on $[\Delta(X)]$ that is different from the previous $\prec$ in Eq.~(\mref{eq:cmon}). For any $\fraku:=u_1\cdots u_p$ and $\frakv:=v_1\cdots v_q\in [\Delta(X)]\backslash \{1\}$ with $u_1\preceq\cdots\preceq u_p,\,v_1\preceq\cdots\preceq v_q$ under the order (\mref{eq:diffmon}) on $\Delta(X)$, set
\[1<_{\lex}\fraku,\,\fraku<_{\lex}\frakv\,\Leftrightarrow\,u_1=v_1,\cdots,u_{k-1}=v_{k-1},\text{ but }u_k\prec v_k\text{ for some }k.\]
Then under this order $<_{\lex}$, the set $\DIrr(\hat{S})$ of differential $\hat{S}$-irreducible words is precisely the basis of $\calc\cald_0(A)$ consisting of $\alpha$-terms given by Levi, since now
\[\oline{d_X^m((x^{(0)})^2)}=\begin{cases}
(x^{(k)})^2,&m=2k,\\
x^{(k)}x^{(k+1)},&m=2k+1,
\end{cases} \quad k\geq 0.\]
By CD Lemma~\mref{ccdld}, the set $\hat{S}=\left\{(x^{(0)})^2\right\}$ is a Gr\"obner-Shirshov basis in $\bk[\Delta(X)]$
with respect to the lexicographic order $<_{\lex}$. Thus we find another supporting example of a weight $0$ commutative differential algebra for which the Gr\"obner-Shirshov basis of the generating algebra can be extended.

\subsubsection{Free differential algebras on some group algebras}

Denote the free (commutative) differential algebra on a finite (commutative) group algebra $\bk G$ more simply by
\[\freed{G}:=\freed{\bk G}\quad(\freecd{G}:=\freecd{\bk G}),\]
with differential operator $d_G$.

The following result shows that free commutative differential algebra on an algebra can be very degenerated.
\begin{prop}\label{fcg}
	For any finite commutative group $G$, the free commutative differential algebra of weight 0 on $\bk G$ is $(\bk G,0)$.
\end{prop}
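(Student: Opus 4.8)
The plan is to verify directly that $(\bk G, 0)$, the group algebra equipped with the zero derivation, satisfies the universal property defining the free commutative differential algebra $\calc\cald_0(\bk G)$ of weight $0$, with structure map $i_{\bk G} = \id_{\bk G}$. Since the zero map trivially fulfills the Leibniz rule \eqref{diff} with $\lambda = 0$ and the unital condition \eqref{difu}, $(\bk G, 0)$ is a commutative weight-$0$ differential algebra, and the only nontrivial point is to show that every algebra homomorphism $\varphi : \bk G \to R$ into a commutative weight-$0$ differential algebra $(R, d)$ is automatically a differential algebra homomorphism out of $(\bk G, 0)$, and is the unique one factoring through $i_{\bk G}$.

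The crux is an observation about units of finite order. Let $(R,d)$ be a commutative differential $\bk$-algebra of weight $0$ and let $u \in R$ satisfy $u^n = 1$ for some $n \geq 1$. Differentiating this relation and using the weight-$0$ power rule gives
\[ 0 = d(1) = d(u^n) = n\, u^{n-1} d(u). \]
As $u^n = 1$, the element $u$ is invertible with inverse $u^{n-1}$, and as $\rchar \bk = 0$ the scalar $n$ is invertible in $\bk$; multiplying by $n^{-1} u^{-(n-1)}$ forces $d(u) = 0$. This single step is exactly where both hypotheses $\lambda = 0$ and $\rchar \bk = 0$ enter, and it is the place where the argument would collapse for nonzero weight; this is the step I regard as the key point of the proof, though it is not technically difficult.

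Applying this to the group elements finishes the universal property. For each $g \in G$ one has $g^{|g|} = 1$ in $\bk G$, hence $\varphi(g)^{|g|} = \varphi(1) = 1$ in $R$, so $\varphi(g)$ is a unit of finite order and $d(\varphi(g)) = 0$ by the previous step. Since $\{g \mid g \in G\}$ is a $\bk$-basis of $\bk G$, linearity of $d$ and $\varphi$ yields $d \circ \varphi = 0 = \varphi \circ 0$ on all of $\bk G$; thus $\varphi$ commutes with the derivations and is a differential algebra homomorphism $(\bk G, 0) \to (R, d)$ satisfying $\varphi = \varphi \circ i_{\bk G}$. Any differential homomorphism $\bar\varphi$ with $\varphi = \bar\varphi \circ \id_{\bk G}$ must equal $\varphi$, so existence and uniqueness both hold and $\calc\cald_0(\bk G) \cong (\bk G, 0)$.

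As a cross-check with the Gr\"obner--Shirshov machinery of the paper, I would also argue through a presentation: writing $G \cong \ZZ/n_1 \times \cdots \times \ZZ/n_k$ and $\bk G \cong \bk[t_1, \dots, t_k]/(t_i^{n_i} - 1 \mid 1 \leq i \leq k)$, the differential ideal $\DI(\hat S)$ in $\bk[\Delta(X)]$ generated by the lifts $\hat S = \{(t_i^{(0)})^{n_i} - 1 \mid i\}$ contains $d_X((t_i^{(0)})^{n_i} - 1) = n_i (t_i^{(0)})^{n_i - 1} t_i^{(1)}$; modulo $\DI(\hat S)$ the factor $(t_i^{(0)})^{n_i - 1}$ is a unit and $n_i$ is invertible, so $t_i^{(1)} \equiv 0$, and inductively $t_i^{(m)} \equiv 0$ for all $m \geq 1$. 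Hence the quotient $\bk[\Delta(X)]/\DI(\hat S)$ collapses to $\bk G$ with zero derivation, recovering the same conclusion. I anticipate no serious obstacle beyond phrasing the finite-order computation carefully; the one point to emphasize is that this degeneration is genuinely special to weight $0$ and characteristic $0$.
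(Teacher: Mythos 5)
Your proof is correct, and your main argument takes a genuinely different route from the paper's. The paper stays inside its presentation framework: using Proposition~\ref{pp:atda} it writes $\calc\cald_0(G)=\bk[\Delta(G)]/\left(d_G^n(g^{(0)}h^{(0)})-(g\cdot h)^{(n)},\,e^{(n)}-\delta_{n,0}\right)$ and then shows, working \emph{inside this quotient}, that $g^{(1)}=0$ for every $g\in G$ --- by exactly the computation you isolate as the crux: differentiating $(g^{(0)})^r=e^{(0)}=1$ with the weight-$0$ Leibniz rule and invoking characteristic zero and the invertibility of $g^{(0)}$. Once the induced derivation kills the generators, the quotient collapses to $\bk G$. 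You instead verify the universal property directly: your lemma that a finite-order unit in any commutative weight-$0$ differential algebra is annihilated by the derivation shows that \emph{every} algebra homomorphism $\varphi:\bk G\to R$ automatically commutes with the derivations, so $(\bk G,0)$ with $i_{\bk G}=\id$ is itself the free object. The underlying computation is the same, but your packaging buys something: it bypasses Proposition~\ref{pp:atda} and the identification of the collapsed quotient with $\bk G$, it makes transparent exactly where $\lambda=0$ and characteristic zero enter, and it generalizes verbatim to group algebras of arbitrary torsion (not necessarily finite) commutative groups. The paper's route, conversely, keeps the argument within the differential-ideal formalism on which the rest of the paper runs, which is what lets it connect to the Gr\"obner-Shirshov discussion afterwards (e.g.\ the enlarged basis $\hat{S}^+$ in the remark following Corollary~\ref{cy}); indeed your ``cross-check'' paragraph is essentially the paper's proof, just with the cyclic-decomposition presentation $\bk[t_1,\dots,t_k]/(t_i^{n_i}-1)$ in place of the presentation using all group elements as generators. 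One small point to state explicitly: your step $\varphi(g)^{|g|}=\varphi(1)=1$ assumes $\varphi$ is unital; this is consistent with the paper's conventions (its presentation imposes $e^{(0)}=1$), but it is the one hypothesis your key lemma silently uses.
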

\begin{proof}
First note that
		\[\bk G=\bk[g\,|\,g\in G]/(gh-g\cdot h,\,e-1\,|\,g,h\in G),\]
	where we use $\cdot$ to denote the multiplication of $G$, and $e$ is the unit in $G$.	
	Then by Proposition~\ref{pp:atda}, we have
	 \[\calc\cald_0(G)=\bk[\Delta(G)]/\left(d_G^n(g^{(0)}h^{(0)})-(g\cdot h)^{(n)},\,e^{(n)}-\delta_{n,0}\,|\,g,h\in G,\,n\geq0\right).\]
	Given any $g\in G$, there exists $r\geq 0$ such that $g^r=e$ in $\bk G$, since $|G|<\infty$. Then \[0=d_G(e^{(0)})=d_G((g^{(0)})^r)=rg^{(1)}(g^{(0)})^{r-1}\]
	by Eqs.~\eqref{diff} and \eqref{difu} with $\lambda=0$. Hence, $rg^{(1)}(g^{(0)})^r=rg^{(1)}=0$, which also means that $g^{(1)}=0$ as $\bk$ is of characteristic 0. Since $g$ is arbitrary, we have $d_G=0$, and thus $\calc\cald_0(G)=\bk G$.
\end{proof}

As a corollary of Proposition \ref{poly}, we also obtain the construction of free differential algebras for cyclic groups.

\begin{coro}\label{cy}
	For the cyclic group $C_n$ of order $n\geq2$, the free differential algebra $\freed{C_n}$ is isomorphic to
	\[\bk\langle \Delta(X) \rangle\big/\left(d_X^m((x^{(0)})^n)-\delta_{m,0}\,\big|\,m\geq0\right),\]
	with $X=\{x\}$, and $\hat{S}=\left\{(x^{(0)})^n-1\right\}$ is a Gr\"{o}bner-Shirshov basis in $\bk\langle \Delta(X)\rangle$ for arbitrary weight $\lambda$, such that the set
	\[\DIrr(\hat{S})=\left\{\frakx\in M(\Delta(X))\,\Big|\,x^{(m)}(x^{(m(1-\delta_{\lambda,0}))})^{n-1}\nmid\frakx,\,m\geq0\right\}\]
	is a $\bk$-basis of $\freed{C_n}$.
	
On the other hand, the free commutative differential algebra $\freecd{C_n}$ is isomorphic to
	 \[\bk[\Delta(X)]\big/\left(d_X^m((x^{(0)})^n)-\delta_{m,0}\,\big|\,m\geq0\right),\]
	while $\hat{S}=\left\{(x^{(0)})^n-1\right\}$ is a Gr\"{o}bner-Shirshov basis in $\bk[\Delta(X)]$ when $\lambda\neq0$, such that
	\[\DIrr(\hat{S})=
	 \left\{\prod\nolimits_{i\geq0}(x^{(i)})^{n_i}\,\bigg|\,\sum\nolimits_{i\geq0}n_i<\infty\mbox{ with all }n_i=0,1,\dots,n-1\right\}\]
	is a $\bk$-basis of $\freecd{C_n}$.
\end{coro}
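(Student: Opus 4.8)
The plan is to recognize this statement as a direct specialization of Proposition~\ref{poly}. First I would identify the group algebra $\bk C_n$ with the quotient $\bk[x]/(x^n-1)$, via the standard isomorphism sending a chosen generator of $C_n$ to the class of $x$. Since there is only one generator, $\bk\langle x\rangle=\bk[x]$, so this single presentation serves simultaneously for the noncommutative and commutative settings, and $A=\bk C_n$ falls under the hypotheses of Proposition~\ref{poly} with $f(x)=x^n-1$ of degree $n>0$. I would also record that $\{x^n-1\}$ is a Gr\"obner-Shirshov basis of the ideal $(x^n-1)$ in $\bk[x]$: being a single polynomial in one variable, it admits no compositions to verify. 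Hence Theorem~\ref{mtgs} gives that $\hat{S}=\left\{(x^{(0)})^n-1\right\}$ is a Gr\"obner-Shirshov basis in $\bk\langle\Delta(X)\rangle$ for arbitrary weight $\lambda$, and Theorem~\ref{mtgsc} gives the same conclusion in $\bk[\Delta(X)]$ when $\lambda\neq0$.

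Next I would make the differential ideal explicit. Writing $f(x^{(0)})=(x^{(0)})^n-1$ and using $d_X(1)=0$, one has $d_X^m(1)=\delta_{m,0}$, so
\[d_X^m\!\left(f(x^{(0)})\right)=d_X^m\!\left((x^{(0)})^n\right)-\delta_{m,0}.\]
Substituting this into the presentations $\freed{A}=\bk\langle\Delta(X)\rangle/\DI(\hat{S})$ and $\freecd{A}=\bk[\Delta(X)]/\DI(\hat{S})$ from Proposition~\ref{poly} puts $\DI(\hat{S})$ in precisely the form $\left(d_X^m((x^{(0)})^n)-\delta_{m,0}\,\big|\,m\geq0\right)$ stated in the corollary.

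Finally, the basis descriptions follow from Propositions~\ref{prop:fdab} and~\ref{prop:fcdab}. Since the leading term of $f=x^n-1$ is $\oline{f}=x^n$, Eqs.~\eqref{eq:diffpow} and~\eqref{eq:cdiffpow} give $\oline{f}^{[m]}=x^{(m)}(x^{(m(1-\delta_{\lambda,0}))})^{n-1}$ for all $m\geq0$; feeding this into Eq.~\eqref{eq:diffirr} produces the noncommutative basis $\DIrr(\hat{S})$, and in the commutative case ($\lambda\neq0$) feeding it into Eq.~\eqref{eq:cdiffirr} and translating the divisibility condition $\oline{f}^{[m]}\nmid\frakx$ into exponent bounds yields the displayed set $\left\{\prod_{i\geq0}(x^{(i)})^{n_i}\,\big|\,n_i\in\{0,1,\dots,n-1\}\right\}$. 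I do not anticipate a genuine obstacle here, as all the substance is already carried by Proposition~\ref{poly}; the only points that warrant care are the bookkeeping identity $d_X^m(1)=\delta_{m,0}$ and the faithful translation of the divisibility condition into the explicit exponent constraints in the commutative case.
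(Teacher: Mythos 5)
Your proposal is correct and matches the paper's route exactly: the paper presents this statement as an immediate corollary of Proposition~\ref{poly} (with $f(x)=x^n-1$, using $\bk C_n\cong\bk[x]/(x^n-1)$ and $\bk\langle x\rangle=\bk[x]$), together with Theorems~\ref{mtgs} and~\ref{mtgsc} and Propositions~\ref{prop:fdab} and~\ref{prop:fcdab}, which is precisely what you do. The bookkeeping steps you flag --- $d_X^m((x^{(0)})^n-1)=d_X^m((x^{(0)})^n)-\delta_{m,0}$ and the translation of $\oline{f}^{[m]}=x^{(m)}(x^{(m(1-\delta_{\lambda,0}))})^{n-1}$ into the exponent bounds $n_i\leq n-1$ in the commutative case --- are handled correctly.
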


\begin{remark}
	For $\freed{C_n}$ in Corollary \ref{cy}, if weight $\lambda=0$, one can easily see that
	 \[d_X^m((x^{(0)})^n-1)=\sum_{i_1,\dots,i_n\geq0\atop i_1+\cdots+i_n=m}{m\choose i_1,\dots,i_n}x^{(i_1)}\cdots x^{(i_n)}-\delta_{m,0},\]
	for all $m\geq0$, where ${n\choose i_1,\dots,i_n}=\tfrac{n!}{i_1!\cdots i_n!}$ is the multinomial coefficient. In particular,
	\[x^{(m)}(x^{(0)})^{n-1}\equiv -\sum_{i_1=0}^{m-1}\sum_{i_2,\dots,i_n\geq0\atop i_2+\cdots+i_n=m-i_1}{m\choose i_1,\dots,i_n}x^{(i_1)}\cdots x^{(i_n)}-\delta_{m,0}\mod\ideal(\hat{S})\]
	can be used to reduce any words of $\cald_0(C_n)$ into a linear combination of irreducible ones in $\DIrr(\hat{S})$.
	Otherwise, if $\lambda\neq 0$, the expansion of $d_X^m((x^{(0)})^n-1)$ possibly has to be obtained by the more complicated formula \eqref{dm} instead.

	\medskip
	On the other hand, considering $\hat{S}=\left\{(x^{(0)})^n-1\right\}$ in $\bk[\Delta(X)]$ when $\lambda=0$, we in particular have
	 \[[(x^{(0)})^n-1,(x^{(0)})^n-1]_{w_{1,0}}=n^{-1}d_X((x^{(0)})^n-1)x^{(0)}-((x^{(0)})^n-1)x^{(1)}=x^{(1)}\]
	with $w_{1,0}=(x^{(0)})^nx^{(1)}$, but it is nontrivial modulo $(\hat{S},w_{1,0})$, since clearly $x^{(1)}$ can not be any linear combination of differential words $((x^{(0)})^n-1)x^{(0)},\,d_X^m((x^{(0)})^n-1),\,m\geq0$,  in $\DI(\hat{S})$, whose leading terms $\prec w_{1,0}$. Thus
	$\hat{S}$ is not a Gr\"{o}bner-Shirshov basis in $\bk[\Delta(X)]$ when $\lambda=0$.
	
Instead, one can further enlarge $\hat{S}$ to a Gr\"{o}bner-Shirshov basis   \[\hat{S}^+=\left\{(x^{(0)})^n-1,\,x^{(1)}\right\}\]
		in $\bk[\Delta(X)]$ such that
		\[\DIrr(\hat{S}^+)=
		 \left\{(x^{(0)})^k\,\bigg|\,k=0,1,\dots,n-1\right\}\]
		is a $\bk$-basis of $\freecd{C_n}$, as confirmed by Proposition \ref{fcg} saying that $(\freecd{C_n},d_X)$ is isomorphic to $(\bk C_n,0)$. \end{remark}


\noindent
{\bf Acknowledgments.}
This work is supported by the NSFC Grants (Nos. 12071094, 11771142, 11771190) and the China Scholarship Council (No. 201808440068). The authors thank Alexey Ovchinnikov, Gleb Pogudin and William Sit for valuable discussions during the Kolchin Seminar at GC/CUNY. Y. Li also thanks Rutgers University at Newark for providing a stimulating environment of research during his visit from August 2018 to August 2019.

\end{document}